\newtheorem{theorem}{\bf Theorem}[section]
\newtheorem{assumption}{\bf Assumption}[section]
\newtheorem{corollary}[theorem]{\bf Corollary}
\newtheorem{definition}{\bf Definition}[section]
\newtheorem{lemma}{\bf Lemma}[section]
\newtheorem{problem}{\bf Problem}[section]
\newtheorem{proposition}{\bf Proposition}[section]
\newtheorem{conjecture}{Conjecture}[section]
\newtheorem{example}{Example}[section]
\newtheorem{remark}{Remark}[section]
\begin{document}
\title[Generalized Minimizers for Control-Affine Problems]{Measuring
Singularity of Generalized Minimizers for Control-Affine Problems }
\author[Manuel Guerra]{Manuel Guerra$^1$}
\author[Andrey Sarychev]{Andrey Sarychev$^2$}
\address{$^1$Instituto Superior de Economia e Gest\~ao, Technical University
of Lisbon, R. do Quelhas 6, 1200-781 Lisboa, Portugal,
mguerra@iseg.utl.pt
\\
$^2$Dipartimento di Matematica per le Decisioni, Universit\`a di
Firenze, via C.Lombroso 6/17, 50134 - Firenze (FI), Italy,
asarychev@unifi.it}
\thanks{The first author has been partially supported by Funda\c{c}\~{a}o
para a Ci\^{e}ncia e a Tecnologia (FCT), Portugal, co-financed by
the European Community Fund FEDER/POCI via Research Center on
Optimization and Control (CEOC) of the University of Aveiro,
Portugal. The second author has been partially supported by MIUR,
Italy via PRIN 2006019927}

\begin{abstract}
An open question contributed by Yu. Orlov to a recently published
volume "Unsolved Problems in Mathematical Systems and Control
Theory", V.D. Blondel \& A. Megretski (eds), Princeton Univ. Press,
2004, concerns regularization of optimal control-affine problems.
These noncoercive problems in general admit 'cheap (generalized)
controls' as minimizers; it has been questioned whether and under
what conditions infima of the regularized problems converge to the
infimum of the original problem. Starting with a study of this
question we show by simple functional-theoretic reasoning that it
admits, in general, positive answer. This answer does not depend on
com\-mu\-ta\-ti\-vity/non\-com\-mu\-ta\-ti\-vity of controlled
vector fields. It depends instead on presence or absence of a
Lavrentiev gap.

We set an alternative question of measuring "singularity" of
minimizing sequences for control-affine optimal control problems by
so-called degree of singularity. It is shown that, in the particular
case of singular linear-quadratic problems, this degree is tightly
related to the "order of singularity" of the problem. We formulate a
similar question for nonlinear control-affine problem and establish
partial results. Some conjectures and open questions are formulated.
\end{abstract}

\maketitle

\thispagestyle{empty} 


{\small \textbf{Keywords:}} optimal control-affine problem,
regularization, generalized control, singular linear-quadratic
optimal control problem, order of singularity, Lavrentiev phenomenon

\section{Introduction \label{S0}}

The following open question, suggested by Yu. Orlov, appeared in a recently
published volume by V. Blondel et. al. \cite{Orl}.

Consider an optimal control problem.
\begin{eqnarray}
&&J_{0}^{T}(u(\cdot ))=\int_{0}^{T}x(t)^{\prime }Px(t)dt\rightarrow \min ,
\label{j0t} \\
&&\dot{x}=f(x)+G(x)u,\qquad x(0)=x_{0}.  \label{af}
\end{eqnarray}%
$T\in ]0,+\infty ]$ is fixed, $P$ denotes a symmetric definite positive
matrix, $f$ is a smooth vector field and $G(x)=\left(
g_{1}(x),g_{2}(x),...,g_{k}(x)\right) $ is an array of smooth vector fields.
An endpoint condition
\begin{equation}
x(T)=x_{T}  \label{ep}
\end{equation}%
can be added when $T<\infty $.

Consider a regularization of this problem, which amounts to minimization of
the penalized functional
\begin{equation}  \label{jet}
J_\varepsilon^{T}(u(\cdot))= \int_0^{T}x(t)^{\prime}Px(t)
+\varepsilon |u(t)|^2 dt \rightarrow \min,
\end{equation}
calculated along the trajectories of (\ref{af}). The question put by Yu.
Orlov in \cite{Orl} was whether and under what assumptions
\begin{equation*}
\lim\limits_{\varepsilon \rightarrow 0^+} \min\limits_u
J_\varepsilon^{T}(u)=\inf\limits_u J_0^{T}(u).
\end{equation*}
We show that the answer to this question is positive in almost all
cases. Further, the result holds for every nonnegative penalization
(not necessarily quadratic) that one may chose to regularize the
functional (\ref{j0t}).

We suggest an alternative question. In our opinion it is not the \textit{%
values} of the infima which should be studied, but rather the asymptotics of
the regularized functionals along minimizing sequences of $J_{0}^{T}$.
Indeed, it is quite general phenomenon that, for generic data $%
\lim\limits_{m \rightarrow \infty } \left\Vert u^{(m)}\right\Vert
_{L_{2}}=+\infty $ holds for any sequence $\left\{ u^{(m)}\in L_{\infty
,loc},\,m\in \mathbb{N}\right\} $ such that $\lim\limits_{m \rightarrow
\infty } J_{0}^{T}(u^{(m)})=\inf\limits_{u}J_{0}^{T}(u)$.

We trust that the minimal rate of growth of a sequence $\left\{ \left\Vert
u^{(m)}\right\Vert _{L_{2}},\, m \in \mathbb{N }\right\} $ that can be
achieved when $J_{0}^{T}(u^{(m)})\leq \inf\limits_{u}J_{0}^{T}(u)+\frac{1}{m}
$, is an important property characterizing the degree of singularity of
problem (\ref{j0t})-(\ref{af}) or (\ref{j0t})-(\ref{af})-(\ref{ep}).

For the particular case of singular linear-quadratic problems we are able to
fully characterize all types of singularities that occur. For nonlinear
control-affine case we provide partial answers.

This paper is organized as follows. In Section~\ref{S1} we answer
Yu. Orlov's question in finite-horizon and infinite-horizon settings
and introduce an extension of this question demonstrating its
interrelation with Lavrentiev phenomenon in calculus of variations
and optimal control. In Section \ref{ds} we introduce the notion of
"degree of singularity" (Definition~\ref{degsing}) and set the
problem of 'measuring singularity' of generalized minimizers. In
Section \ref{S2} we give a full characterization of possible values
of the degree of singularity for singular linear-quadratic problem
in finite-horizon and infinite-horizon settings (Theorems~\ref{c
muT}, \ref{strat}, \ref{c muInf}). In Section~\ref{cap} we introduce
the case of nonlinear control-affine problems
(\ref{j0t})-(\ref{af})-(\ref{ep}). The general driftless case is
solved in Section \ref{ncdr} (Theorem \ref{Thdriftless}). In Section
\ref{cafm} we provide an upper estimate of the degree of singularity
for the case, where the cost is positive state-quadratic and
controlled vector fields $g_{i}$ commute: $\left[ g_{i},g_{j}\right]
=0$, $\forall i,j$ (Theorem~\ref{taf3/2}). In Section~\ref{S8} we
provide some evidence for existence of a better estimate for the
commutative case and illustrate by example. The proofs of several
results discussed in Sections \ref{S2}-\ref{cafm} are quite
technical and are collected in Section~\ref{S4}.

A brief exposition of part of these results has appeared in
\cite{GS1}.

Partial results for generic control-affine problems
(\ref{j0t})-(\ref{af})-(\ref{ep}) with noncommuting inputs will be
the object of a separate publication.

We are grateful to an anonymous referee who brought to our attention
some additional bibliographic references and whose stimulating
questions and remarks allowed us to (hopefully) improve the
presentation especially in what regards Section~\ref{S1}.

\section{Convergence of regularized functionals and Lavrentiev phenomenon\label{S1}}

In this Section we answer Yu. Orlov's question in a slightly more
general setting. Namely, we will consider in place of (\ref{jet}),
the functional penalized by $\varepsilon \rho (t,u(t))$
\begin{equation}
\label{jetg} J_\varepsilon^T \left( u(\cdot ) \right) = \int_0^T
x(t)^\prime P x(t) + \varepsilon \rho (t,u(t)) \, dt,
\end{equation}
where $\rho : [0,T] \times \mathbb R^k \mapsto [0,+\infty[ $
is a nonnegative Borel function. We denote by
$\mathcal U_\rho$ the set of admissible controls for the problem
(\ref{jetg})-(\ref{af}) ((\ref{jetg})-(\ref{af})-(\ref{ep})):
\begin{equation}\label{urho}
\mathcal U_\rho = \left\{ u:[0,T] \mapsto \mathbb R^k \, | \, u \
{\rm is \ measurable,} \int_0^T \rho (t,u(t)) \, dt < + \infty
\right\} ,
\end{equation}
provided we set $J_\varepsilon^T(u) = + \infty$ for any $u \in
\mathcal U_\rho$ for which (\ref{af}) does not admit solution in the
interval $[0,T[$.
It is clear that $\mathcal U_\rho = L_p^k[0,T]$, whenever
$\rho(t,u) = |u|^p$ in \eqref{urho} with $p \in [1, + \infty [$,
as it is often the case.

Basically, a {\it positive} answer to Yu.~Orlov's question is contained
in the following result.
\smallskip

\begin{theorem}
\label{pos}Let $\mathcal U_\rho$ be a class of admissible controls
defined by \eqref{urho}, $J^T_0(u), \ J^T_\varepsilon(u)$ be the original cost
functional \eqref{j0t}
and the regularized cost functional \eqref{jetg}, respectively. Then
\begin{equation}
 \label{lit}  \lim\limits_{\varepsilon \rightarrow
0^+}\,\inf\limits_{u \in \mathcal U _\rho}
J^T_\varepsilon(u)=\inf_{u \in \mathcal U _\rho} J^T_0(u) . \
\square
\end{equation}
\end{theorem}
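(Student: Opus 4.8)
The plan is to establish the equality \eqref{lit} by proving two inequalities. One direction is essentially trivial: since $\rho \ge 0$ and $\varepsilon > 0$, we have $J^T_\varepsilon(u) \ge J^T_0(u)$ for every $u \in \mathcal U_\rho$, hence $\inf_{u} J^T_\varepsilon(u) \ge \inf_u J^T_0(u)$ for each $\varepsilon$, and therefore $\liminf_{\varepsilon \to 0^+} \inf_u J^T_\varepsilon(u) \ge \inf_u J^T_0(u)$. The substance of the theorem is the reverse inequality $\limsup_{\varepsilon \to 0^+} \inf_u J^T_\varepsilon(u) \le \inf_u J^T_0(u)$.

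For the reverse inequality I would argue as follows. Fix an arbitrary $\delta > 0$ and pick $u_\delta \in \mathcal U_\rho$ with $J^T_0(u_\delta) \le \inf_u J^T_0(u) + \delta$; note $u_\delta$ lies in $\mathcal U_\rho$, so by definition $\int_0^T \rho(t, u_\delta(t))\, dt =: C_\delta < +\infty$, and the corresponding trajectory $x_\delta(\cdot)$ exists on $[0,T[$. Then for every $\varepsilon > 0$,
\begin{equation*}
\inf_{u \in \mathcal U_\rho} J^T_\varepsilon(u) \le J^T_\varepsilon(u_\delta) = J^T_0(u_\delta) + \varepsilon \int_0^T \rho(t, u_\delta(t))\, dt \le \inf_u J^T_0(u) + \delta + \varepsilon C_\delta.
\end{equation*}
Letting $\varepsilon \to 0^+$ with $\delta$ (and hence $C_\delta$) fixed gives $\limsup_{\varepsilon \to 0^+} \inf_u J^T_\varepsilon(u) \le \inf_u J^T_0(u) + \delta$, and since $\delta > 0$ was arbitrary, the reverse inequality follows. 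Combining the two inequalities yields that $\lim_{\varepsilon \to 0^+} \inf_u J^T_\varepsilon(u)$ exists and equals $\inf_u J^T_0(u)$.

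The only delicate point, and the one I would be careful about, is the edge cases in the definitions rather than any hard analysis. If $\inf_u J^T_0(u) = +\infty$ — e.g. if no admissible control produces a trajectory on $[0,T[$, or in the finite-horizon problem with endpoint constraint \eqref{ep} no admissible control is feasible — then every $J^T_\varepsilon$ also has infimum $+\infty$ and the identity is trivially true; one should just observe that $\mathcal U_\rho$ and the feasibility set are the same for all $\varepsilon$ and for the unpenalized problem, so the class of competitors does not shrink as $\varepsilon \to 0$. Symmetrically, if for some admissible $u$ one has $J^T_0(u) = -\infty$ (not possible here since $P$ is positive definite, but worth dismissing) the same estimate handles it. This is precisely why the phenomenon is driven by the Lavrentiev gap: the argument above uses the \emph{same} control $u_\delta$ for the regularized and unregularized problems, so it works exactly when near-optimal controls for $J^T_0$ can be chosen within $\mathcal U_\rho$ with finite penalization — which holds by the very definition of the infimum over $\mathcal U_\rho$. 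No commutativity of the vector fields $g_i$ enters anywhere.
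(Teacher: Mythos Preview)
Your proof is correct and is essentially the same elementary argument as the paper's: both fix a near-minimizer $u_\delta\in\mathcal U_\rho$ of $J^T_0$, use finiteness of $\int_0^T\rho(t,u_\delta(t))\,dt$ to bound $J^T_\varepsilon(u_\delta)\le\inf J^T_0+\delta+\varepsilon C_\delta$, and then send $\varepsilon\to 0$ followed by $\delta\to 0$. The only cosmetic difference is that the paper packages this as a single diagonal sequence $\varepsilon_m=(m\,\nu_m)^{-1}$ rather than your two-step limit, and your treatment of the edge cases is a bit more explicit.
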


\begin{proof}
Fix $u^{(m)} \in \mathcal U _\rho$, a minimizing sequence for
$J^T_0$. Without loss of generality one may think that
$J^T_0(u^{(m)}) \leq \inf J^T_0 +1/m.$ Let $\int_0^T \rho(t,u(t)) \,
dt =\nu_m$. Then,
\begin{align*}
\inf J^T_0 \leq & \inf J^T_\varepsilon \leq J^T_\varepsilon(u^{(m)})
=
\\
= & J^T_0(u^{(m)}) + \varepsilon\nu_m \leq \inf J^T_0 +1/m+
\varepsilon\nu_m.
\end{align*}
Taking $\varepsilon_m=\nu_m^{-1}/m$ we conclude that
\begin{equation*}
\inf J^T_0 \leq \inf J^T_{\varepsilon_m} \leq J^T_{\varepsilon_m}(u^{(m)})
\leq \inf J^T_0 +2/m
\end{equation*}
and hence \eqref{lit} holds.
\end{proof}

Theorem \ref{pos} has the following immediate Corollary: \smallskip

\begin{corollary}
If $u_\varepsilon(\cdot)  \in \mathcal U _\rho $ are minimizers of
the regularized problems (\ref{jetg})-(\ref{af}) or (\ref{jetg})-(\ref{af})-(\ref{ep}), then
\begin{equation*}
\lim\limits_{\varepsilon \rightarrow
0^+}J_0^T(u_\varepsilon)=\inf_{u \in \mathcal U _\rho} J_0^T . \
\square
\end{equation*}
\end{corollary}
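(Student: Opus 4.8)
The plan is to derive the corollary from Theorem~\ref{pos} by a two-sided squeeze, with no further analytic input. First I would record, for each fixed $\varepsilon>0$, the elementary chain of inequalities that follows from $u_\varepsilon$ being a minimizer of the regularized problem and from $\rho\geq 0$:
\[
\inf_{u\in\mathcal U_\rho}J^T_0 \;\leq\; J^T_0(u_\varepsilon) \;\leq\; J^T_0(u_\varepsilon)+\varepsilon\!\int_0^T\!\rho(t,u_\varepsilon(t))\,dt \;=\; J^T_\varepsilon(u_\varepsilon) \;=\; \inf_{u\in\mathcal U_\rho}J^T_\varepsilon .
\]
The first inequality holds because $u_\varepsilon\in\mathcal U_\rho$; the second because $\rho$ is nonnegative; the two equalities because $u_\varepsilon$ realizes the infimum of $J^T_\varepsilon$. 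All quantities appearing are finite precisely because $u_\varepsilon\in\mathcal U_\rho$ and it is a minimizer, so~\eqref{af} admits a solution on $[0,T[$ and the relevant integrals converge.

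Next I would let $\varepsilon\to 0^+$. Applying $\limsup$ to the outer terms of the chain and using Theorem~\ref{pos}, namely $\lim_{\varepsilon\to 0^+}\inf_{u\in\mathcal U_\rho}J^T_\varepsilon=\inf_{u\in\mathcal U_\rho}J^T_0$, gives
\[
\limsup_{\varepsilon\to 0^+}J^T_0(u_\varepsilon)\;\leq\;\inf_{u\in\mathcal U_\rho}J^T_0 ,
\]
while the leftmost inequality in the chain already yields $\liminf_{\varepsilon\to 0^+}J^T_0(u_\varepsilon)\geq\inf_{u\in\mathcal U_\rho}J^T_0$. Hence $\limsup$ and $\liminf$ coincide, the limit exists, and it equals $\inf_{u\in\mathcal U_\rho}J^T_0$, which is the assertion.

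I do not expect a genuine obstacle: the only substantive fact used is the convergence of the infima, which is exactly Theorem~\ref{pos}, and everything else is the definition of a minimizer together with $\rho\geq 0$. The single point that deserves a careful word is the standing hypothesis that minimizers $u_\varepsilon\in\mathcal U_\rho$ actually exist for the regularized problems --- this is what lets us write $J^T_\varepsilon(u_\varepsilon)=\inf J^T_\varepsilon$ rather than a mere inequality, and what guarantees that $J^T_0(u_\varepsilon)$ is well defined; were existence to fail, one would instead run the identical computation along almost-minimizers of $J^T_\varepsilon$, which is in effect what the proof of Theorem~\ref{pos} already carries out.
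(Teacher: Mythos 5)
Your squeeze argument $\inf J^T_0 \leq J^T_0(u_\varepsilon) \leq J^T_\varepsilon(u_\varepsilon) = \inf J^T_\varepsilon \to \inf J^T_0$ is correct and is precisely the "immediate" deduction from Theorem~\ref{pos} that the paper intends, since it states the corollary without proof. Your side remark on the existence of the minimizers $u_\varepsilon$ is apt but not needed, as their existence is a hypothesis of the corollary.
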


Note that the main point of {\it regularizing} a singular problem
(\ref{j0t})-(\ref{af}) or (\ref{j0t})-(\ref{af})-(\ref{ep}) is to
obtain a similar problem possessing a minimizer in a suitable space
of {\it regular} controls. Existence results for optimal control
problems with control-affine dynamics typically require superlinear
growth of the integrand in the cost functional, as $|u| \rightarrow
\infty$ (see e.g. \cite{Ces}), i.e. $\rho(t,|u|)/|u| \rightarrow
\infty$ as $|u| \rightarrow \infty$, uniformly with respect to $t
\in [0,T]$. Therefore, a penalty of type $\rho(t,u)=|u|^{1+\eta}$
($\eta
> 0$, constant) typically guarantees existence of solution for the
regularized problem, while a penalty of type $\rho(t,u)=|u|$ may
fail to do it. It is also natural to assume that all the  classes
$\mathcal U _\rho$ of controls (see \eqref{urho}) are contained in
$L_1^k[0,T]$; otherwise it is hard to verify existence of
trajectories of the control-affine system \eqref{af}.

An interesting extension of the original question formulated in
\cite{Orl} would be to admit not only the possibility of regularizing the
functional but also of 'regularizing' its domain of definition.

This would mean introducing {\it two classes of controls} $\mathcal
U \supset \mathcal U _\rho$, considering the functional \eqref{j0t}
on $\mathcal U$ while considering the regularized functionals
\eqref{jetg} in a smaller class of 'more regular' controls $\mathcal
U _\rho$. Here $\mathcal U $ can be any suitable class of controls,
not necessarily defined by an equality of type (\ref{urho}).

Our {\it extended question} would be whether
\begin{equation}\label{rho0}
\lim\limits_{\varepsilon \rightarrow 0^+}\,\inf\limits_{u \in
\mathcal U _\rho} J^T_\varepsilon(u)=\inf_{u \in \mathcal U }
J^T_0(u) \ ?
\end{equation}

This question  turns out to be tightly related (and in fact {\it
equivalent}) to another prominent issue of the calculus of
variations and optimal control - the {\it Lavrentiev phenomenon}
(see \cite{Ces} for a brief account and historical remarks).

Recall that a functional $J^T_0(u)$ defined on a class $\mathcal U
\supset \mathcal U _\rho$ exhibits {\it Lavrentiev phenomenon}  or
possesses {\it $\mathcal U -\mathcal U_\rho$ Lavrentiev gap} if
\begin{equation}\label{gap}
 \inf_{u \in \mathcal U } J^T_0(u) < \inf_{u \in \mathcal U
_{\rho}} J^T_0(u) .
\end{equation}

The following elementary result shows that validity of \eqref{rho0}
is equivalent to nonoccurence of Lavrentiev phenomenon for $J^T_0$.

\begin{theorem}\label{lavr} Equality \eqref{rho0} holds if and only if
$J^T_0$ does not possess  $\mathcal U -\mathcal U_\rho$ Lavrentiev
gap. $\square$
\end{theorem}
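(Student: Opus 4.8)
The plan is to prove the biconditional in Theorem~\ref{lavr} directly from the (already established) Theorem~\ref{pos}, which gives $\lim_{\varepsilon\to 0^+}\inf_{u\in\mathcal U_\rho}J^T_\varepsilon(u)=\inf_{u\in\mathcal U_\rho}J^T_0(u)$. Since the left-hand side of \eqref{rho0} is exactly this limit, the quantity $\lim_{\varepsilon\to 0^+}\inf_{u\in\mathcal U_\rho}J^T_\varepsilon(u)$ is unconditionally equal to $\inf_{u\in\mathcal U_\rho}J^T_0(u)$. Therefore \eqref{rho0} holds if and only if $\inf_{u\in\mathcal U_\rho}J^T_0(u)=\inf_{u\in\mathcal U}J^T_0(u)$, and since $\mathcal U\supset\mathcal U_\rho$ always gives $\inf_{u\in\mathcal U}J^T_0(u)\le\inf_{u\in\mathcal U_\rho}J^T_0(u)$, this equality is precisely the negation of the strict inequality \eqref{gap} defining the Lavrentiev gap. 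So the whole argument reduces to chaining these three observations.

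Concretely, first I would invoke Theorem~\ref{pos} to rewrite the left-hand side of \eqref{rho0}. One subtlety worth flagging: Theorem~\ref{pos} as stated concerns $\inf_{u\in\mathcal U_\rho}$ of both $J^T_\varepsilon$ and $J^T_0$, so the statement ``$\lim_{\varepsilon\to0^+}\inf_{\mathcal U_\rho}J^T_\varepsilon=\inf_{\mathcal U_\rho}J^T_0$'' applies verbatim here — the enlargement to $\mathcal U$ only affects the right-hand target, not the $\varepsilon$-limit. Second, I would note the trivial monotonicity $\inf_{\mathcal U}J^T_0\le\inf_{\mathcal U_\rho}J^T_0$ coming from $\mathcal U_\rho\subset\mathcal U$. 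Third, combining: \eqref{rho0} reads $\inf_{\mathcal U_\rho}J^T_0=\inf_{\mathcal U}J^T_0$, which holds iff equality (rather than strict inequality) occurs in the monotonicity relation, i.e.\ iff \eqref{gap} fails, i.e.\ iff there is no $\mathcal U-\mathcal U_\rho$ Lavrentiev gap.

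There is essentially no hard step here; the result is genuinely elementary once Theorem~\ref{pos} is in hand, and the authors signal as much by calling it ``the following elementary result.'' The only point requiring a moment's care is the one noted above: making sure that Theorem~\ref{pos}'s conclusion is quoted in the form where the $\varepsilon$-limit equals $\inf_{\mathcal U_\rho}J^T_0$ and not $\inf_{\mathcal U}J^T_0$ — i.e., that regularizing the \emph{domain} played no role in Theorem~\ref{pos}, only regularizing the \emph{functional} did. Once that is clear, the biconditional is a one-line consequence, and I would write the proof as a short chain of equalities/inequalities with the ``iff'' pivoting on whether the monotonicity inequality is strict. I do not expect to need any properties of the dynamics \eqref{af}, of $\rho$, or of $P$ beyond what Theorem~\ref{pos} already encapsulates.
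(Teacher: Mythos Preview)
Your proposal is correct and takes essentially the same approach as the paper: both reduce the biconditional to the identity $\lim_{\varepsilon\to 0^+}\inf_{\mathcal U_\rho}J^T_\varepsilon=\inf_{\mathcal U_\rho}J^T_0$ furnished by Theorem~\ref{pos}, after which \eqref{rho0} is equivalent to $\inf_{\mathcal U_\rho}J^T_0=\inf_{\mathcal U}J^T_0$, i.e.\ to absence of the gap. The only cosmetic difference is that the paper proves the ``no gap $\Rightarrow$ \eqref{rho0}'' direction by re-running the minimizing-sequence argument from the proof of Theorem~\ref{pos} rather than simply citing its conclusion, whereas you invoke Theorem~\ref{pos} once and handle both directions simultaneously; your version is slightly more economical but not substantively different.
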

\begin{proof}
Whenever we have equality in place of strict inequality in
\eqref{gap} there exists a minimizing sequence $u^{(m)} \in \mathcal
U _\rho$ such that $\lim\limits_{m \rightarrow \infty}
J^T_0(u^{(m)}) = \inf\limits_{u \in \mathcal U } J^T_0(u).$ Now
\eqref{rho0} is concluded in the same way as \eqref{lit} has been
concluded in the proof of the Theorem~\ref{pos}.

Note that by Theorem~\ref{pos} $\lim\limits_{\varepsilon \rightarrow
0^+}\,\inf\limits_{u \in \mathcal U _\rho} J^T_\varepsilon(u) =
\inf\limits_{u \in \mathcal U _\rho} J^T_0(u)$. By direct
computation $\lim\limits_{\varepsilon \rightarrow
0^+}\,\inf\limits_{u \in \mathcal U _\rho} J^T_\varepsilon(u) \geq
\inf\limits_{u \in \mathcal U } J^T_0(u)$. Whenever the last
inequality  is strict we immediately conclude the presence of
Lavrentiev gap $\inf\limits_{u \in \mathcal U _\rho} J^T_0(u) >
\inf\limits_{u \in \mathcal U } J^T_0(u). \ \square$
\end{proof}

Thus we have completely reduced the validity of the equality
\eqref{rho0} above to the nonoccurence of $\mathcal U -\mathcal
U_\rho$ Lavrentiev gap for the optimal control problem
\eqref{j0t}-\eqref{af} or \eqref{j0t}-\eqref{af}-\eqref{ep}.

Lavrentiev phenomenon has been  mainly studied for the classical
problem of Calculus of Variations, Some partial results
regarding occurrence of this phenomenon for optimal control problems
are known; see \cite{CM,Sarl} where there are examples of Lavrentiev
phenomenon occurring for variational problems with higher-order
derivatives; these problems can be interpreted as Lagrange problems
with linear dynamics.

The Lavrentiev phenomenon is seen more as a rarity; the above cited
results certainly involve more sophisticated cost functionals than
the quadratic functional \eqref{j0t}, though the dynamics involved are
linear autonomous in contrast to \eqref{af}.

In the case of finite horizon $T< + \infty $, nonoccurrence of
$L_1^k[0,T]-L_\infty^k[0,T]$ Lavrentiev gap in
\eqref{j0t}-\eqref{af} or \eqref{j0t}-\eqref{af}-\eqref{ep} can be
easily proved. To see this, consider a minimizing sequence $\left\{
u^{(m)} \in L_1^k[0,T] , m \in \mathbb N \right\} $ of the
functional $J^T_0(u)$. Recall that the input/trajectory mapping
$u(\cdot) \mapsto x_u(\cdot)$ is continuous (on some $L_1^k[0,T]
$-neighborhood of each $u ^{(m)} $) with respect to $L_1 ^k [0,T]$
metric of $u$'s and $L_\infty ^n [0,T]$-metric of $x_u$'s. Then the
map $u \mapsto J^T_0(u)=\int_0^Tx_u'Px_udt$  is continuous. As we
know the functions from $ L^k_1[0,T]$ are approximable in
$L_1$-metric by functions from $L^k_\infty[0,T]$. Hence, taking
proper approximations of the functions $\left\{ u^{(m)}\right\}$ we
can construct for $J^T_0(u)$ a minimizing sequence $\left\{
\bar{u}^{(m)}\right\}$ of functions from $L^k_\infty[0,T]$.
Therefore $\inf\limits_{u \in L_\infty^k[0,T]} J^T_0(u) =
\inf\limits_{u \in L_1^k[0,T]} J^T_0(u) $, which implies equality in
\eqref{rho0} according to Theorem~\ref{lavr}. Thus we proved the
following

\begin{theorem}
\label{finor} Consider the problem \eqref{j0t}-\eqref{af}
(\eqref{j0t}-\eqref{af}-\eqref{ep}) with finite horizon $T< + \infty
$. Equality \eqref{rho0} holds for any classes of controls
$L_1^k[0,T] \supset \mathcal U \supset \mathcal U_\rho \supset
L_\infty^k[0,T]. \ \square$
\end{theorem}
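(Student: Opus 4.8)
The plan is to use Theorem~\ref{lavr}, which already reduces the assertion to the statement that $J^T_0$ possesses no $\mathcal U-\mathcal U_\rho$ Lavrentiev gap, i.e. $\inf_{u\in\mathcal U}J^T_0(u)=\inf_{u\in\mathcal U_\rho}J^T_0(u)$. Since the four classes are nested, $L^k_\infty[0,T]\subseteq\mathcal U_\rho\subseteq\mathcal U\subseteq L^k_1[0,T]$, monotonicity of the infimum over the set of admissible controls gives for free
\[
\inf_{u\in L^k_1[0,T]}J^T_0(u)\ \le\ \inf_{u\in\mathcal U}J^T_0(u)\ \le\ \inf_{u\in\mathcal U_\rho}J^T_0(u)\ \le\ \inf_{u\in L^k_\infty[0,T]}J^T_0(u).
\]
Hence it suffices to prove the single equality $\inf_{L^k_1[0,T]}J^T_0=\inf_{L^k_\infty[0,T]}J^T_0$, and in fact only the inequality ``$\ge$'' in it (the reverse being trivial), after which all four infima coincide and Theorem~\ref{lavr} yields \eqref{rho0}.

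To get that inequality I would take a minimizing sequence $\{u^{(m)}\}\subset L^k_1[0,T]$ for $J^T_0$ and approximate each $u^{(m)}$ by bounded controls. Two ingredients are needed: (i) density of $L^k_\infty[0,T]$ in $L^k_1[0,T]$, realized concretely by the truncations $u\mapsto u\,\chi_{\{|u|\le N\}}$, which converge in $L^k_1[0,T]$ as $N\to\infty$; and (ii) continuity of $u\mapsto J^T_0(u)=\int_0^T x_u' P x_u\,dt$ in the $L^k_1[0,T]$-topology, at least on a small $L^k_1$-ball around each fixed $u^{(m)}$. For (ii) one writes the variation equation for $x_u-x_{u^{(m)}}$, bounds the drift and the $G(x)u$ terms by $L(1+|u^{(m)}(s)|)\,|x_u-x_{u^{(m)}}|$ along the compact tube of trajectories, and the term $G(x_{u^{(m)}})(u-u^{(m)})$ by $\mathrm{const}\cdot\|u-u^{(m)}\|_{L_1}$; a Gronwall estimate with the $L_1$-weight $L(1+|u^{(m)}|)$ then shows $\|x_u-x_{u^{(m)}}\|_{L^n_\infty[0,T]}\le C_m\|u-u^{(m)}\|_{L_1}$, so the input/trajectory map is locally $L_1$-to-$L_\infty$ continuous and $x\mapsto\int_0^T x'Px\,dt$ is Lipschitz on bounded sets of $L^n_\infty[0,T]$. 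Combining (i) and (ii), for every $m$ choose $\bar u^{(m)}\in L^k_\infty[0,T]$ with $|J^T_0(\bar u^{(m)})-J^T_0(u^{(m)})|<1/m$; then $\{\bar u^{(m)}\}$ is a minimizing sequence for $J^T_0$ inside $L^k_\infty[0,T]$, which proves $\inf_{L^k_\infty}J^T_0\le\inf_{L^k_1}J^T_0$, and we are done.

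The genuinely delicate point, and the one I expect to be the main obstacle, is the endpoint constraint \eqref{ep}: the approximants $\bar u^{(m)}$ only satisfy $x_{\bar u^{(m)}}(T)\to x_{u^{(m)}}(T)=x_T$, not $x_{\bar u^{(m)}}(T)=x_T$, so they need not be admissible for the constrained problem. To repair this I would modify $\bar u^{(m)}$ on a short terminal interval $[T-\delta_m,T]$ so as to steer the state exactly to $x_T$, using small-time local controllability near the endpoint; because $x_{\bar u^{(m)}}(T)-x_T\to 0$ one can take $\delta_m\to 0$ together with a small correction, so that its contribution to $J^T_0$ (integration over a shrinking interval, plus the induced change of the trajectory, which stays in a fixed compact set) tends to zero and the minimizing property is preserved. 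A secondary caveat is that solutions of \eqref{af} need not be global, so the continuity in (ii) is only asserted on a neighbourhood of each $u^{(m)}$, which is enough since each $u^{(m)}$ is admissible and its trajectory exists on all of $[0,T]$. In the free-endpoint case these issues do not arise and the argument is pure soft analysis, the Gronwall estimate being the only quantitative step.
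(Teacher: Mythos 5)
Your proof follows essentially the same route as the paper's: reduce via Theorem~\ref{lavr} to the absence of an $L_1^k$--$L_\infty^k$ Lavrentiev gap, then combine density of $L_\infty^k[0,T]$ in $L_1^k[0,T]$ with the local $L_1$-to-$L_\infty$ continuity of the input/trajectory map (which the paper merely recalls and you substantiate with a Gronwall estimate) to upgrade an $L_1$ minimizing sequence to an $L_\infty$ one. The endpoint-constraint difficulty you flag is genuine and is passed over in silence by the paper; note only that your proposed repair invokes small-time local controllability near $x_T$, a hypothesis not contained in the statement of the theorem, so in the fixed-endpoint case your argument and the paper's are incomplete to exactly the same degree.
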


We are not aware of any results on occurrence/nonoccurrence of
Lavrentiev phenomenon for infinite horizon.  We provide below
conditions which can be imposed on the control system \eqref{af} in
order to guarantee the lack of Lavrentiev gap for the problem
\eqref{j0t}-\eqref{af} with $T=+\infty$ and validity of equality
\eqref{rho0} for a pair $L^k_{1,loc} , \ L^k_p[0,+\infty[   $.

\begin{definition}
\label{detc} The control affine system (\ref{af}) is said to be
\textit{locally stabilizable of order $\alpha$} if there exists a
Lipschitzian feedback $\bar{u}(x) $, and a constant $C< + \infty$
such that $\bar{u}(0)=0$ and $ |x(t;x_0)| \leq
C|x_0|(t+1)^{-\alpha}$ holds for every $x_0$ in some neighborhood of
the origin. Here $x(t;x_0)$ is the trajectory of the ODE
\begin{equation*}
\dot{x}=f(x)+G(x)\bar u(x), \qquad x(0)=x_0. \ \square
\end{equation*}
\end{definition}

\begin{theorem}
\label{lav} Assume the horizon to be infinite: $T=+\infty$. If the
system (\ref{af}) is locally stabilizable of order $\alpha >
\frac{1}{p}$, with $p \in [1,2[$, then $J_0^\infty $ does not have
$L_{1,loc}-\left( L_p \cap L_\infty \right)$ Lavrentiev gap, i.e.
\[
\inf_{u \in L_p^k[0,+\infty[ \cap L_\infty^k[0,+\infty[ }J_0^\infty(u) = \inf_{u \in L_{1,loc}^k}J_0^\infty(u).
\]
For $p \in [2, + \infty]$ the equality holds provided $\alpha >
\frac 1 2$. $\square$
\end{theorem}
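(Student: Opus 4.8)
=== PROOF PROPOSAL ===

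The plan is to construct, for an arbitrary control $u \in L^k_{1,loc}$ with $J_0^\infty(u) < +\infty$, an approximating sequence in $L^k_p \cap L^k_\infty$ whose costs converge to $J_0^\infty(u)$; by Theorem~\ref{lavr} this suffices, since the reverse inequality between the infima is trivial. The idea is a \emph{truncate-and-stabilize} argument: fix a large time $S$, follow the original control $u$ on $[0,S]$, and then switch at time $S$ to the stabilizing feedback $\bar u(x)$ of Definition~\ref{detc}, applied to the state $x(S)$ reached at that moment.

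First I would record the effect of the switch. On $[S,+\infty[$ the closed-loop trajectory satisfies $|x(t)| \le C|x(S)|\,(t-S+1)^{-\alpha}$, provided $x(S)$ lies in the neighborhood of the origin on which the estimate holds. Since $J_0^\infty(u)<\infty$ forces $x(t)\to 0$ along a subsequence of times, one can choose $S=S_m\to\infty$ with $x(S_m)$ in that neighborhood and $|x(S_m)|\to 0$; the tail cost $\int_{S_m}^\infty x'Px\,dt \le \|P\|\,C^2|x(S_m)|^2 \int_0^\infty (\tau+1)^{-2\alpha}\,d\tau$, which is finite precisely when $2\alpha>1$, i.e. $\alpha>\frac12$, and tends to $0$. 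Meanwhile the head cost $\int_0^{S_m} x'Px\,dt \to J_0^\infty(u)$ by monotone convergence. So the spliced controls $u_m$ have $J_0^\infty(u_m)\to J_0^\infty(u)$. It remains to check the integrability class: on $[0,S_m]$, $u_m=u\in L_1$ is replaced, if necessary, by an $L_\infty$ approximation as in the finite-horizon Theorem~\ref{finor} (adjusting the cost by at most $1/m$); on $[S_m,\infty[$, $u_m(t)=\bar u(x(t))$ with $\bar u$ Lipschitz and $\bar u(0)=0$, so $|\bar u(x(t))| \le L|x(t)| \le LC|x(S_m)|(t-S_m+1)^{-\alpha}$, which lies in $L_\infty$ and in $L_p$ exactly when $p\alpha>1$, i.e. $\alpha>\frac1p$. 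This is where the hypothesis $\alpha>\frac1p$ (for $p<2$) versus $\alpha>\frac12$ (for $p\ge 2$) enters: the binding constraint is $\max\{1/p,\,1/2\}<\alpha$, and for $p\ge2$ the cost-tail condition $\alpha>\frac12$ already dominates.

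The main obstacle I anticipate is the bookkeeping around the \emph{local} nature of the stabilizing feedback: the estimate in Definition~\ref{detc} is only guaranteed for $x_0$ in a neighborhood of the origin, so one must genuinely extract times $S_m$ at which the state is both small and at which the original trajectory's remaining cost is negligible — this requires using $J_0^\infty(u)<\infty$ together with $P>0$ to ensure $\liminf_{t\to\infty}|x(t)|=0$, and then a diagonal choice of $S_m$. A secondary technical point is the continuous dependence of trajectories on controls needed to replace the $L_1$ piece on $[0,S_m]$ by an $L_\infty$ one without spoiling the value $x(S_m)$; this is handled exactly as in Theorem~\ref{finor}, invoking continuity of the input/trajectory map in the $L_1\to L_\infty$ topology on each compact interval, so that a sufficiently close approximation keeps $|x(S_m)|$ inside the stabilizability neighborhood and changes the head cost by less than $1/m$. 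Assembling these pieces gives a minimizing sequence in $L_p^k\cap L_\infty^k$, hence no Lavrentiev gap, hence \eqref{rho0}.
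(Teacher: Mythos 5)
Your proposal is correct and follows essentially the same route as the paper's proof: extract times $S_m$ at which the state is small (using $J_0^\infty(u)<\infty$ and $P>0$), replace the $L_1$ head by an $L_\infty$ control on $[0,S_m]$ via the finite-horizon continuity/density argument, splice in the Lipschitz stabilizing feedback, and estimate the tail cost by $\int_0^\infty(\tau+1)^{-2\alpha}d\tau$ and the tail control's $L_p$-norm by $\int_0^\infty(\tau+1)^{-p\alpha}d\tau$. Your observation that the binding constraint is $\alpha>\max\{1/p,1/2\}$ matches exactly how the paper's two hypotheses arise.
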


\begin{proof}
In the case $\inf\limits_{u \in L_{1,loc}^k} J_0^\infty (u) = +
\infty$, the theorem holds trivially. Hence we only need to consider
the case when $\inf\limits_{u \in L_{1,loc}^k} J_0^\infty (u) < +
\infty$.

Fix $\varepsilon >0$. For each $u \in L_{1,loc}^k$ let $x_u$ denote
the corresponding trajectory of system (\ref{af}). There exists
$\tilde u \in L_{1,loc}^k$ such that $\int_0^\infty x_{\tilde
u}(t)^\prime P x_{\tilde u}(t) dt < \inf\limits_{u \in L_{1,loc}^k}
J_0+\varepsilon < +\infty$. Then $\lim\limits_{t \rightarrow +
\infty} \int _t^{+ \infty} x_{\tilde u}(\tau)^\prime P x _{\tilde
u}(\tau) d\tau = 0$. Since $P$ is positive there must exist a
sequence $\left\{ t_j \right\} \rightarrow +\infty $ for which
$x_{\tilde u}(t_j) \rightarrow 0$. Since $u \mapsto x_u(\cdot )$ is
a continuous mapping from $L_1^k[0,t_j]$ into $L_\infty^n[0,t_j]$,
it follows by density of $L_\infty^k[0,t_j]$ that there exist
controls $u_j \in L_\infty^k[0,t_j]$ such that
\begin{align*}
\int_0^{t_j}x_{u_j}(t) ^\prime P x_{u_j}(t) \, dt & \leq
\int_0^{t_j}x_{\tilde u }(t) ^\prime P x_{\tilde u }(t) \, dt +
\varepsilon \leq \inf _{u \in L_{1,loc}^k} J_0^\infty (u) + 2
\varepsilon; \\
x_{u_j}(t_j) & \rightarrow 0.
\end{align*}
Concatenate the trajectory $x _{ u_j}(\cdot)$ with the trajectory
$y_j(t)$ starting at $x_{ u_j}(t_j)$ and driven by the feedback
control $\bar{u}$ from Definition \ref{detc}. For every sufficiently
large $j \in \mathbb{N}$ there holds
\begin{equation*}
|y_j(t)^{\prime}Py_j(t)| \leq C_1 |x_{u_j}(t_j)|^2(1+t-t_j)^{-2
\alpha}, \qquad \forall t\geq t_j.
\end{equation*}
This implies
\begin{equation*}
\int_{t_j} ^{+\infty}y_j(t)^{\prime}Py_j(t) dt \leq
\frac{C_1}{2\alpha -1 } |x_{u_j}(t_j)|^2 .
\end{equation*}
Since the feedback control $\bar{u}(x)$ is Lipschitzian and
$\bar{u}(0)=0$, then $| \bar{u}(y_j(t)) | \leq C_2 |y_j(t)| \leq C_3
| x_{u_j}(t_j)|(1+t-t_j)^{-\alpha}$, and hence for some $M<+\infty$,
\linebreak $\int_{t_j}^{+\infty} |\bar{u}(y_j(t))|^p dt < M$, holds
for all sufficiently large $j \in \mathbb{N}.$ This proves that the
control,
\begin{equation*}
\hat u_j (t)=\left\{
\begin{array}{ll} u_j (t) & if \ t \leq t_j; \\
\bar u (y_j(t)) & if \ t >t_j,%
\end{array}%
\right.
\end{equation*}
is of class $L_p^k[0,+\infty[ \cap L_\infty^k[0,+\infty[$. Evaluating the functional $J_0$
along the corresponding concatenated trajectory $x_{\hat
u_j}(\cdot)$ we conclude that
\begin{equation*}
J_0(\hat u_j ) \leq \inf_{u \in L_{1,loc}^k}
J_0+2\varepsilon+\frac{C_1}{2\alpha -1 }|x_{u_j}(t_j)|^2.
\end{equation*}
Choosing $j$ sufficiently large we thus construct a control
$u_\varepsilon (\cdot) \in L_p^k[0,+\infty[$ for which
\begin{equation*}
J_0(u_\varepsilon) \leq \inf_{u \in L_{1,loc}^k} J_0+ 3 \varepsilon
.
\end{equation*}
Taking $\varepsilon \rightarrow 0$ we arrive to a minimizing sequence of
$p$-integrable controls. The proof is completed by application of Theorem %
\ref{pos}.
\end{proof}

\smallskip

The following corollary follows immediately from Theorems \ref{lavr}
and  \ref{lav}:

\begin{corollary}
\label{P2} Assume the horizon to be infinite: $T=+\infty$. If the
system (\ref{af}) is locally stabilizable of order $\alpha >
\frac{1}{p}$, with $p \in [1,2[$, then
\[
\lim_{\varepsilon \rightarrow 0^+} \inf_{u \in L_p^k[0,+\infty[}
\left( J_0^\infty (u) + \varepsilon \| u\|_{L_p^k[0,+\infty[
}^p\right) = \inf_{u \in L_{1,loc}^k}J_0^\infty(u).
\]
For $p \in [2, + \infty[$ the equality holds provided $\alpha >
\frac 1 2$. $\square$
\end{corollary}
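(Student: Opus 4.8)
The plan is to derive Corollary~\ref{P2} as a direct consequence of Theorem~\ref{lav} (lack of Lavrentiev gap) combined with Theorem~\ref{pos} (convergence of regularized infima), organized exactly as the paper has set things up. First I would observe that the penalized functional inside the limit, namely $J_0^\infty(u) + \varepsilon\|u\|_{L_p^k}^p$, is precisely of the form \eqref{jetg} with the Borel penalty $\rho(t,u) = |u|^p$, for which, as noted after \eqref{urho}, the admissible class $\mathcal U_\rho$ coincides with $L_p^k[0,+\infty[$. Hence Theorem~\ref{pos} applies verbatim and yields
\[
\lim_{\varepsilon\rightarrow 0^+}\,\inf_{u\in L_p^k[0,+\infty[}\left(J_0^\infty(u)+\varepsilon\|u\|_{L_p^k}^p\right) = \inf_{u\in L_p^k[0,+\infty[}J_0^\infty(u).
\]

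Next I would invoke Theorem~\ref{lav} to replace the right-hand side. Under the stabilizability-of-order-$\alpha$ hypothesis with $\alpha>\tfrac1p$ (resp. $\alpha>\tfrac12$ when $p\ge 2$), Theorem~\ref{lav} asserts that $J_0^\infty$ has no $L_{1,loc}^k$--$(L_p^k\cap L_\infty^k)$ Lavrentiev gap, i.e.
\[
\inf_{u\in L_p^k[0,+\infty[\,\cap\, L_\infty^k[0,+\infty[}J_0^\infty(u) = \inf_{u\in L_{1,loc}^k}J_0^\infty(u).
\]
Since $L_p^k\cap L_\infty^k \subset L_p^k \subset L_{1,loc}^k$ (the last inclusion on any bounded interval), the monotonicity of infima over nested domains squeezes $\inf_{u\in L_p^k}J_0^\infty(u)$ between these two equal quantities, forcing $\inf_{u\in L_p^k}J_0^\infty(u) = \inf_{u\in L_{1,loc}^k}J_0^\infty(u)$. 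Chaining this with the displayed limit from the previous step gives exactly the claimed equality. Equivalently, one can phrase the argument through Theorem~\ref{lavr}, which already records that validity of \eqref{rho0} is equivalent to absence of the Lavrentiev gap established in Theorem~\ref{lav}; the only extra remark needed is the identification of $\inf_{u\in\mathcal U_\rho}J^T_\varepsilon$ with the penalized infimum written in Corollary~\ref{P2}.

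There is essentially no obstacle here: the corollary is a bookkeeping combination of two already-proved theorems, and the only point requiring a line of care is checking that the intermediate class $L_p^k$ is genuinely sandwiched between $L_p^k\cap L_\infty^k$ and $L_{1,loc}^k$ so that the two infima in Theorem~\ref{lav} pinch the $L_p^k$-infimum appearing in Theorem~\ref{pos}. Once that inclusion chain is noted, the proof is immediate by applying Theorem~\ref{lav} followed by Theorem~\ref{pos} (as the corollary statement itself advertises).
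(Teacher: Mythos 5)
Your proposal is correct and follows essentially the same route the paper intends: the paper derives Corollary~\ref{P2} ``immediately'' from Theorems~\ref{lavr} and~\ref{lav}, which unpacked is exactly your combination of Theorem~\ref{pos} (identifying the limit of the penalized infima with $\inf_{L_p^k}J_0^\infty$) with the no-gap statement of Theorem~\ref{lav}. Your explicit sandwich $\inf_{L_{1,loc}^k}J_0^\infty\le\inf_{L_p^k}J_0^\infty\le\inf_{L_p^k\cap L_\infty^k}J_0^\infty$ is a detail the paper leaves implicit, and it is needed and correctly handled.
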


Note that the convergence issue for regularized functionals is
settled  by elementary functional-theoretic arguments   and the
answer does not depend on commutativity assumptions for the
controlled vector fields and other issues typically involved in the
study of generalized controls. We would like to formulate now a
different problem related to the system (\ref{af}), which will be
central point of our contribution.

\section{Degree of singularity. Problem setting}

\label{ds}

In what follows we consider our optimal control problem with {\it finite} or {\it infinite horizon}.

Due to lack of coercivity, "classical" minimizers for
(\ref{j0t})-(\ref{af})-(\ref{ep}) do not, in general, exist. It is
known that for generic boundary conditions, minimizing sequences of
classical controls usually converge to some 'generalized controls'
which may contain impulses or more complex singularities. For such
problems quasioptimal ($\varepsilon $-minimizing) controls
$u^{\varepsilon }$ are known to exhibit high-gain highly-oscillatory
behavior. It is expected $\lim\limits_{\varepsilon \rightarrow
0^{+}}\left\Vert u^{\varepsilon }\right\Vert _{L_{2}}=+\infty $ to
hold for any minimizing sequence $\left\{ u^{\varepsilon }\right\}
$. Still the asymptotics of growth of $\left\Vert u^{\varepsilon
}\right\Vert _{L_{2}}$ varies from problem to problem and therefore
this asymptotics can be used for measuring the degree of singularity
of the problem. This is also a problem of practical importance,
because suboptimal controls are harder to realize in practice when
"good" approximations of $\inf J_{0}^{T}$ require 'too high' gain
and 'too fast' oscillation.

In order to address this question, we introduce the following measure of
"singular behavior" of a problem (\ref{j0t})-(\ref{af})-(\ref{ep}).
\smallskip

\begin{definition}
\label{degsing} In the finite horizon case the degree of singularity
of the problem (\ref{j0t})-(\ref{af})-(\ref{ep}) is
\begin{equation*}
\sigma_T=\limsup \limits_{\varepsilon \rightarrow 0^+} \frac{ \inf
\left\{ \ln \left\|u\right\|_{L_2} : J_0^T(u) \leq \inf J_0^T +
\varepsilon , \ \left|x_u(T)-x_T \right| < \varepsilon \right\} } {
\ln \frac{1}{\varepsilon} } .
\end{equation*}
In the infinite horizon case the degree of singularity of the
problem (\ref{j0t})-(\ref{af}) is
\begin{equation*}
\sigma_\infty=\limsup \limits_{\varepsilon \rightarrow 0^+}\frac{
\inf \left\{\ln \left\|u\right\|_{L_2}: J_0^\infty(u) \leq \inf
J_0^\infty + \varepsilon \right\} }{\ln \frac{1}{\varepsilon}} . \
\Box
\end{equation*}
\end{definition}

Our main goal from now on will be computation of degree of
singularity for various optimal control-affine problems. In the next
Section we provide a complete analysis for singular linear quadratic
problems.

\section{Singular linear-quadratic case}

\label{S2}

In this section we discuss the relationship between the degree of
singularity $\sigma_T $ and the structure of generalized minimizers
in the singular linear-quadratic case. We believe this relation
provides a compelling evidence for the usefulness of degree of
singularity for measuring singular behavior of minimizing sequences.

In \cite{Guer}, a definition of order of singularity for LQ problem
has been introduced and it was shown that singular linear-quadratic
problems can be classified according to it. This order of
singularity is an integer $r\leq n$, $n$ being the dimension of the
state space. If $\inf \limits_{u}J_{0}^{T}(u)>-\infty $, then a
problem of order $r$ admits a generalized minimizer in the Sobolev
space $H_{-r}$.

We will show that the degree of singularity $\sigma_T $ from
Definition~\ref{degsing} is tightly related to the order of
singularity of a problem. For (singular) LQ problems with
state-quadratic integrand $x^{\prime }Px$ and $P>0$ it is shown that
$\sigma_T =\frac{1}{2}$, while order of singularity equals $1$. For
an LQ problem, with more general functional (\ref{j0tG}), order of
singularity $r$ and generic boundary data, $\sigma_T
=r-\frac{1}{2}$. When nongeneric boundary conditions are imposed,
one can show that $\sigma_T $ admits values from a finite set. These
values correspond to a stratification of the space of boundary data,
which is related to results in \cite{Jurdj}, \cite{Jurdj-Kupka},
\cite{Willems} and \cite{Guer}.

The content of Subsections~\ref{SS2a} to \ref{SS2d} below is
essentially a brief sketch of the results contained in \cite{Guer},
which are essential for the computation of $\sigma_T $.
Subsection~\ref{SS2e} contains an important technical result
(Proposition~\ref{P7}) regarding approximation of distributions from
Sobolev space $H_{-r}$. This result is applied in Subsection
\ref{SS2f} to computation of the values of degree of singularity $
\sigma_T $.

\subsection{Assumptions\label{SS2a}}

Along this Section the controlled dynamics (\ref{af}) is linear
time-invariant:
\begin{equation}
\dot{x}=Ax+Bu,\qquad x(0)=x_{0}.  \label{ls}
\end{equation}%
The end-point condition is (\ref{ep}). The cost functional, we consider,
will be more general than (\ref{j0t}):
\begin{equation}
J_{0}^{T}(u)=\int_{0}^{T}x_{u}^{\prime }Px_{u}+2u^{\prime }Qx_{u}+u^{\prime
}Ru\,d\tau ,  \label{j0tG}
\end{equation}%
where $R$ is a symmetric nonnegative matrix. If $R$ is positive
definite, then there exists analytic minimizing control for this
problem (at least for sufficiently small $T$) and hence the degree
of singularity $\sigma_T=0$. Therefore the case of interest is the
singular one, where $R$ possesses a nontrivial kernel.

We assume the following to hold.

\begin{assumption}
\label{nogauge} Let the matrices $A, \ B, \ P, \ Q, \ R$ in (\ref{ls}),(\ref%
{j0tG}) be such that for $x_{0} = x_T=0$ and each $T\in ] 0,+\infty [ $,
there exists a subspace $\mathcal{S}^{+}_T$, of finite codimension in $%
L_{2}^{k}[0,T]$, such that $J_{0}^{T}>0$ on $\mathcal{S}^{+}_T\setminus
\{0\} $ (the subspace $\mathcal{S}^{+}_T$ and its codimension may depend on $%
T$). $\square $
\end{assumption}

Assumption \ref{nogauge} may look not very natural, but as we will now
explain, it is closely related to finiteness of $\inf J_{0}^{T}$.

If there exist $T\in ]0,+\infty \lbrack $ and an infinite-dimensional
subspace, $\mathcal{S}^{-}\subset L_{2}^{k}[0,T]$ such that $J_{0}^{T}(u)<0$
in $\mathcal{S}^{-}\setminus \{0\}$, then one can prove that $%
\inf\limits_{u}J_{0}^{T}(u)=-\infty $ holds for every $T>0$ and any boundary
conditions. Thus finiteness of $\inf J_{0}^{T}$ requires that for each $T
\in ]0,+\infty [$ there exist some subspace of finite codimension in $%
L_{2}^{k}[0,T]$ where $J_{0}^{T}$ is non-negative. In this case, the
only way in which Assumption \ref{nogauge} can fail is when the
quadratic form $u \mapsto J_{0}^{T}(u)$ has infinite-dimensional
kernel. In \cite{Guer} it is shown how this kernel can be "factored
out". Naturally, Assumption~\ref{nogauge} will hold after such a
factorization.

Resuming, we may think of Assumption \ref{nogauge} as of a version
of the more intuitive

\begin{assumption}
\label{finite}$\inf J_{0}^{T}(u)>-\infty $ holds for each boundary data $%
(x_{0},x_T)$ (for each initial data $x_{0}$, when $T=+\infty $). \ $\square$
\end{assumption}

Assumptions \ref{nogauge} and \ref{finite} are closely related but
not equivalent. Using the first one is more convenient from the
technical viewpoint. A complete study of problems
(\ref{ls})-(\ref{ep})-(\ref{j0tG}) which satisfy
$\inf\limits_{u}J_{0}^{T}(u)>-\infty $ can be found in \cite{Guer}.
Therein it is shown how Assumption \ref{nogauge} can be checked
using only linear algebra computations.

Note that, in the \textit{finite-horizon} case neither $P$ nor the quadratic
form $(x,u)\mapsto x^{\prime }Px+2u^{\prime }Qx+u^{\prime }Ru$ need to be
nonnegative for $\inf\limits_{u}J_{0}^{T}(u)>- \infty $ to hold. In the
infinite-horizon case we will require this latter nonnegativity.

\subsection{Desingularization of LQ problems\label{SS2b}}

Provided Assumption~\ref{nogauge} holds, a singular linear-quadratic
problem (\ref{ls})-(\ref{ep})-(\ref{j0tG}) can be reduced to a
regular problem, i.e. to an LQ problem with quadratic cost, which is
strictly convex with respect to control. This is done by the
following multistep procedure (for a detailed account of a more
general procedure without Assumption \ref{nogauge}, see
\cite{Guer}).

Let $\phi :L_{2,loc}\mapsto L_{2,loc}$ be the primitivization:
\[
\phi u(t)=\int_{0}^{t}u(\tau )\,d\tau ,\quad \forall u\in L_{2,loc}.
\]

By choosing a suitable coordinate system in the space of control variables,
we may assume without loss of generality that the nonnegative matrix $R$ in (%
\ref{j0tG}) is of the form $R=\left(
\begin{array}{cc}
R_{0,0} & 0 \\
0 & 0%
\end{array}%
\right) ,$ where $R_{0,0}\in \mathbb{R}^{k_{0}\times k_{0}},\ R_{0,0}>0$. We
consider the corresponding splitting of the vectors $u=\left(
u_{0},u_{1}\right) \in \mathbb{R}^{k},\ u_{0}\in \mathbb{R}^{k_{0}}$ and of
the matrices
\begin{equation*}
B=\left( B_{0,0},B_{0,1}\right) \in \mathbb{R}^{n\times k}, \ B_{0,0}\in
\mathbb{R}^{n\times k_{0}}, \ Q=\left(
\begin{array}{c}
\!Q_{0,0}\! \\
\!Q_{0,1}\!%
\end{array}%
\right) \in \mathbb{R}^{k\times n}, \ Q_{0,0}\in \mathbb{R}^{k_{0}\times n}.
\end{equation*}

Let us introduce the operator $\gamma : L_{2,loc}^k \mapsto
L_{2,loc}^k$
\begin{equation*}
\gamma u = \left( u_0 + R_{0,0} ^{-1}
\left(Q_{0,0}B_{0,1}-B_{0,0}^{\prime}Q_{0,1}^{\prime}\right) \phi u_1, \
\phi u_1 \right).
\end{equation*}
The following Proposition represents the trajectory $x_{x_{0},\gamma
u}$ and the value of the functional $J_0(u)$ via solution of an LQ
problem, which is 'less singular'. Due to it the representation
below is called \textit{desingularization procedure}.

\begin{proposition}[\protect\cite{Guer}]
\label{P6}For every $x_{0}\in \mathbb{R}^{n}$, $u\in L_{2}^{k}\left[ 0,T%
\right] $, there holds:%
\begin{align*}
x_{x_{0},u}\left( t\right) = & x_{x_{0},\gamma u}^{1}\left( t\right)
+B_{0,1}\phi u_{1}\left( t\right) ,
\\
J_{x_{0}}\left( u\right) =& \int_{0}^{T} x_{x_{0},\gamma u}^{1\prime
}Px_{x_{0},\gamma u}^{1} + 2\left( \gamma u\right) ^{\prime
}Q_{1}x_{x_{0},\gamma u}^{1} + \left( \gamma u\right)^{\prime
}R_{1}\gamma u~d\tau+ \\
& +\int_{0}^{T}u_{1}^{\prime }\left( Q_{0,1}B_{0,1}-B_{0,1}^{\prime
}Q_{0,1}^{\prime }\right) \phi u_{1}~d\tau + \\
& +2\phi u_{1}\left( T\right) ^{\prime }Q_{0,1}x_{x_{0},\gamma
u}^{1}\left( T\right) + \phi u_{1}\left( T\right) ^{\prime
}Q_{0,1}B_{0,1}\phi u_{1}\left( T\right) ,
\end{align*}
where $x_{x_{0},v}^{1}$ denotes the trajectory of the system
\begin{equation*}
\dot{x}=Ax+B_{1}v,\qquad x\left( 0\right) =x_{0},
\end{equation*}%
\begin{eqnarray*}
&& \hspace{-0.5cm} B_{1} = \left( B_{0,0},\,B_{1,1}\right) ; \\
&& B_{1,1}=
\begin{array}[t]{l}
\left( A-B_{0,0}R_{0,0}^{-1}Q_{0,0}\right) B_{0,1}+ \smallskip
B_{0,0}R_{0,0}^{-1}B_{0,0}^{\prime }Q_{0,1}^{\prime };%
\end{array}
\\
&& \hspace{-0.5cm} Q_{1} = \left(
\begin{array}{c}
Q_{0,0} \\
Q_{1,1}%
\end{array}%
\right) ; \\
&& Q_{1,1}=
\begin{array}[t]{l}
B_{0,1}^{\prime }\left( P-Q_{0,0}R_{0,0}^{-1}Q_{0,0}\right) - \smallskip
Q_{0,1}\left( A-B_{0,0}R_{0,0}^{-1}Q_{0,0}\right) ;%
\end{array}
\\
&& \hspace{-0.5cm} R_{1} = \left(
\begin{array}{cc}
R_{0,0} & 0\medskip \\
0 & \tilde{R}_{1}%
\end{array}%
\right) ; \qquad \tilde{R}_{1}=Q_{1,1}B_{0,1}-B_{1,1}^{\prime
}Q_{0,1}^{\prime }.
\end{eqnarray*}%
For Assumption~\ref{nogauge} to hold, there must be
\begin{equation}  \label{Z13}
Q_{0,1}B_{0,1}-B_{0,1}^{\prime }Q_{0,1}^{\prime }=0
\end{equation}
and $R_{1}\geq 0$.$\square $
\end{proposition}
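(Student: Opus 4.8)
The plan is to prove the two assertions of Proposition~\ref{P6} separately: first the trajectory and cost identities, then the necessary conditions \eqref{Z13} and $R_1\geq0$.

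For the identities I would start from the Cauchy formula $x_{x_0,u}(t)=e^{At}x_0+\int_0^t e^{A(t-s)}\bigl(B_{0,0}u_0(s)+B_{0,1}u_1(s)\bigr)\,ds$ and integrate by parts in the term carrying $u_1=\tfrac{d}{ds}\phi u_1$, using $\phi u_1(0)=0$. This produces the free term $B_{0,1}\phi u_1(t)$ plus $\int_0^t e^{A(t-s)}AB_{0,1}\phi u_1(s)\,ds$; moving the latter into the drift, together with the correction that the definition of $\gamma$ adds to the $u_0$-component, identifies exactly the matrix $B_{1,1}$ and gives $x_{x_0,u}=x^1_{x_0,\gamma u}+B_{0,1}\phi u_1$. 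For the cost, substitute this decomposition and $u_0=(\gamma u)_0-R_{0,0}^{-1}(Q_{0,0}B_{0,1}-B_{0,0}'Q_{0,1}')\phi u_1$ into $\int_0^T x'Px+2u'Qx+u'Ru\,d\tau$, expand according to $u=(u_0,u_1)$, and integrate by parts every occurrence of $u_1=\tfrac{d}{d\tau}\phi u_1$, using $\dot x^1=Ax^1+B_1\gamma u$. Collecting terms: the pieces bilinear in $(\gamma u)_0$ and $\phi u_1$ cancel (this is precisely what $\gamma$ is designed to achieve); the genuine boundary contributions assemble into the two terms evaluated at $T$; the antisymmetric part of $2\langle\tfrac{d}{d\tau}\phi u_1,\,Q_{0,1}B_{0,1}\phi u_1\rangle$ is not a total derivative and survives as $\int_0^T u_1'(Q_{0,1}B_{0,1}-B_{0,1}'Q_{0,1}')\phi u_1\,d\tau$; and the remaining quadratic part regroups into the cost of the system $\dot x=Ax+B_1v$ with weights $(P,Q_1,R_1)$, so that matching coefficients (using the already-found $B_{1,1}$) yields the stated $Q_{1,1}$ and $\tilde R_1$. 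This step is long but mechanical; the only care needed is with transposes and with which contributions are carried by $e^{A\,\cdot}$ versus by its $s$-derivative.

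For necessity of \eqref{Z13} I argue by contradiction, using the cost identity with $x_0=x_T=0$, so that $u\mapsto J_0^T(u)$ is a quadratic form on $L_2^k[0,T]$. If $W:=Q_{0,1}B_{0,1}-B_{0,1}'Q_{0,1}'\neq0$, choose $\xi,\eta$ with $\xi'W\eta\neq0$ and fix pairwise disjoint subintervals $I_1,I_2,\dots\subset[0,T]$. On each $I_j$ I place a control $u_1$ whose primitive $\phi u_1$ traces, inside $\operatorname{span}(\xi,\eta)$, a large number of small closed loops (so $\phi u_1$ vanishes at the endpoints of $I_j$), normalised so that $\int_{I_j}u_1'W\phi u_1\,d\tau=-1$; increasing the number of loops lets $\|\phi u_1\|_{L_\infty(I_j)}$ be made arbitrarily small. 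Completing to $u=(u_0,u_1)$ by the choice $u_0=-R_{0,0}^{-1}(Q_{0,0}B_{0,1}-B_{0,0}'Q_{0,1}')\phi u_1$, so that $(\gamma u)_0\equiv0$, disjointness of supports makes these $u$'s linearly independent, kills all cross-terms, and gives, for any finite combination with coefficient vector $c$, a value $J_0^T=-\|c\|^2+(\text{remainder})$; the boundary terms vanish since $\phi u_1(T)=0$, while every remaining term is controlled by $\|\phi u_1\|_\infty^2$ — here $\|x^1\|_\infty=O(\|\phi u_1\|_\infty)$ because $x^1(0)=0$. Taking the loops fine enough makes $J_0^T$ negative definite on an infinite-dimensional subspace of $L_2^k[0,T]$, contradicting Assumption~\ref{nogauge}, since a subspace of finite codimension meets every infinite-dimensional subspace in a nonzero vector. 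Hence $W=0$. Once $W=0$, the identical scheme — now with $\phi u_1$ itself a high-frequency, mean-zero oscillation along a direction $\eta$ with $\eta'\tilde R_1\eta<0$, again on disjoint subintervals, again with $(\gamma u)_0\equiv0$ — makes the term $\int(\phi u_1)'\tilde R_1\,\phi u_1\,d\tau$ dominate and be negative, yielding the same contradiction; thus $\tilde R_1\geq0$, and since $R_{0,0}>0$ this is $R_1\geq0$.

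The step I expect to be delicate is this last contradiction argument: one must verify, uniformly over unit coefficient vectors $c$, that the principal term genuinely dominates — i.e. that tracing more (hence smaller) loops drives $\|\phi u_1\|_\infty$, and with it $\|x^1\|_\infty$ and every term produced by substituting $\phi u_1$ and $(\gamma u)_0$, to zero while the $W$-term (respectively the $\tilde R_1$-term) stays fixed. The identity part has no real obstacle beyond the length of the bookkeeping.
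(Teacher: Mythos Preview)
The paper does not prove Proposition~\ref{P6}: it is quoted from \cite{Guer}, and the surrounding subsections are explicitly described as ``essentially a brief sketch of the results contained in \cite{Guer}.'' There is therefore no in-paper proof to compare against.

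That said, your plan is sound and is the natural route. The trajectory identity via Cauchy's formula and one integration by parts in $u_1=\tfrac{d}{ds}\phi u_1$ is exactly right and recovers $B_{1,1}$ as stated. The cost identity is, as you say, a long bookkeeping exercise; your remark that the $(\gamma u)_0$--$\phi u_1$ cross terms cancel is correct, but note that the cancellation uses \emph{three} contributions (from $u_0'R_{0,0}u_0$, from $2u_0'Q_{0,0}x$, and from the integration by parts of $2u_1'Q_{0,1}x^1$ via $\dot x^1=Ax^1+B_{0,0}(\gamma u)_0+B_{1,1}\phi u_1$), not two --- worth making explicit so the reader sees why $R_1$ ends up block-diagonal.

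Your contradiction arguments for \eqref{Z13} and $R_1\ge0$ are also correct in outline. Two points deserve care. First, when you pass to infinitely many disjoint intervals $I_j\subset[0,T]$, the trajectory $x^1$ does \emph{not} decouple across the $I_j$ (it carries memory through $e^{A\cdot}$); you should bound it globally by $\|x^1\|_\infty\le C\|\phi u_1\|_{L_1}\le C\epsilon\sum_j|c_j|\,|I_j|$ and then use Cauchy--Schwarz in $j$ together with $\sum_j|I_j|\le T$ to get a bound $O(\epsilon\|c\|)$ uniform on the unit sphere of coefficients. Second, for the $\tilde R_1$ step you need $\|\phi u_1\|_{L_2}$ fixed while $x^1\to0$; this is a Riemann--Lebesgue-type estimate on $\int_0^t e^{A(t-s)}B_{1,1}\phi u_1(s)\,ds$ for high-frequency $\phi u_1$, and again the cross term $\int(\phi u_1)'Q_{1,1}x^1$ is handled by Cauchy--Schwarz. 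With these refinements your argument goes through.
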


If Assumption \ref{nogauge} holds for the $5$-ple $\left(
A,B,P,Q,R\right) $, then it also will hold for the $5$-ple $\left(
A,B_{1},P,Q_{1},R_{1}\right) $, which corresponds to the
desingularized problem. The quadratic form $\left( x,u\right)
\mapsto x^{\prime }Px+2u^{\prime }Qx+u^{\prime }Ru$ is nonnegative
if and only if the quadratic form $\left( x,u\right) \mapsto
x^{\prime }Px+2u^{\prime }Q_{1}x+u^{\prime }R_{1}u$ is. If $R_{1}$
has a nontrivial kernel, then we can repeat the procedure obtaining
a sequence $\left( A,B_{i},P,Q_{i},R_{i}\right) $, $i=1,2,...$. The
following Proposition states that this sequence must be finite.

\begin{proposition}
\label{P8} If Assumption~\ref{nogauge} holds, then there exists an
integer $ r\leq n$ such that $R_{r}>0$.$\square $
\end{proposition}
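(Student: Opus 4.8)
The plan is to iterate the desingularization of Proposition~\ref{P6} and to show that it stabilizes after at most $n$ steps. Starting from $(A,B,P,Q,R)=(A,B_0,P,Q_0,R_0)$, as long as $R_i$ has nontrivial kernel we apply the procedure to the $5$-tuple $(A,B_i,P,Q_i,R_i)$ which, as noted right after Proposition~\ref{P6}, again satisfies Assumption~\ref{nogauge} and has $R_{i+1}\ge 0$; this produces $(A,B_{i+1},P,Q_{i+1},R_{i+1})$. Writing, at each step, $B_i=(B^{(i)}_{0,0},B^{(i)}_{0,1})$ for the splitting of $B_i$ according to the positive-definite block of $R_i$ and its kernel, the statement $R_r>0$ is equivalent to saying that the block $B^{(r)}_{0,1}$ is void, i.e. that the recursion terminates at step $r$. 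Thus it suffices to prove that the recursion cannot run for more than $n$ steps.

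To bound the number of steps I would attach to the recursion a strictly increasing flag of subspaces of $\mathbb R^n$. Proposition~\ref{P6} gives the representation $x_{x_0,u}=x^1_{x_0,\gamma u}+B^{(0)}_{0,1}\phi u_1$; iterating it expresses the trajectory of the original system as the trajectory of the $i$-th desingularized system plus a linear combination of the columns of $B^{(0)}_{0,1},B^{(1)}_{0,1},\dots,B^{(i-1)}_{0,1}$ weighted by iterated primitives of the successive kernel-controls. Let $\mathcal W_i\subseteq\mathbb R^n$ be the span of the columns of $B^{(0)}_{0,1},\dots,B^{(i-1)}_{0,1}$, so $\{0\}=\mathcal W_0\subseteq\mathcal W_1\subseteq\mathcal W_2\subseteq\cdots$, and (using $B^{(i)}_{1,1}=(A-B^{(i)}_{0,0}(R^{(i)}_{0,0})^{-1}Q^{(i)}_{0,0})B^{(i)}_{0,1}+B^{(i)}_{0,0}(R^{(i)}_{0,0})^{-1}(B^{(i)}_{0,0})'(Q^{(i)}_{0,1})'$) every $\mathcal W_i$ is contained in the controllable subspace $\mathrm{Im}\,B_0+A\,\mathrm{Im}\,B_0+\dots+A^{n-1}\mathrm{Im}\,B_0$, hence $\dim\mathcal W_i\le n$. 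Consequently it is enough to show that each non-terminating step is strict, $\mathcal W_i\subsetneq\mathcal W_{i+1}$ whenever the block $B^{(i)}_{0,1}$ is nonvoid: the chain $\{0\}=\mathcal W_0\subsetneq\mathcal W_1\subsetneq\dots\subsetneq\mathcal W_r\subseteq\mathbb R^n$ then forces $r\le n$, and $\mathcal W_r=\mathcal W_{r+1}$ at the last step means $B^{(r)}_{0,1}$ is void, i.e. $R_r>0$.

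For the strict-inclusion step I would argue by contradiction using Assumption~\ref{nogauge}. Suppose $R_i$ is singular but $\mathcal W_{i+1}=\mathcal W_i$, i.e. every column of $B^{(i)}_{0,1}$ already lies in $\mathcal W_i$. Take a kernel-control $u_1^{(i)}$ of the $i$-th desingularized problem supported on a short subinterval and of arbitrarily high frequency; its contribution to the state in that problem is, modulo lower-order (in frequency) terms, a fixed vector of $\mathcal W_i$, while its contribution to the cost is carried solely by the kernel block of $R_i$ and therefore vanishes. Pulling such controls back through the explicit affine substitutions $\gamma$ (which preserve $L_2^k$) and using that, for $x_0=x_T=0$ and \eqref{Z13}, the $\phi u_1$-quadratic term and the boundary terms of Proposition~\ref{P6} either vanish or become negligible, one obtains an infinite-dimensional subspace of controls $u\in L_2^k[0,T]$ on which $J_0^T\le 0$, contradicting the finite-codimensionality of $\mathcal S^+_T$ in Assumption~\ref{nogauge}. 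Hence the inclusion is strict.

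The main obstacle is precisely this last paragraph: turning "$R_i$ still singular and $\mathcal W_{i+1}=\mathcal W_i$" into a genuine infinite-dimensional subspace on which $J_0^T\le 0$ (or lies in the kernel of the form). This requires careful bookkeeping of the boundary and cross terms in Proposition~\ref{P6} and control of the high-frequency approximations after transport by the $\gamma$'s, so that the transported quadratic form is nonpositive on an infinite-dimensional subspace. Once that dichotomy (new direction, or forbidden degeneracy) is in place, the dimension count of the flag $\{\mathcal W_i\}$ completes the proof with $r\le n$.
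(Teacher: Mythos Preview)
The paper does not contain its own proof of Proposition~\ref{P8}. As announced at the beginning of Section~\ref{S2} (``The content of Subsections~\ref{SS2a} to \ref{SS2d} below is essentially a brief sketch of the results contained in \cite{Guer}''), this proposition is merely quoted from \cite{Guer}; the $\square$ marks the end of the statement, and the text resumes immediately with ``In \cite{Guer} the integer $r$ is called \textbf{order of singularity}\ldots''. So there is no in-paper argument for you to be compared against.

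As for your proposal itself: the overall architecture --- iterate Proposition~\ref{P6}, attach to the iteration a flag of subspaces of the $n$-dimensional state space, and terminate by a dimension count --- is the natural one and is in the spirit of how \cite{Guer} handles the question. Two remarks are in order. First, a minor logical cleanup: the sentence ``$\mathcal W_r=\mathcal W_{r+1}$ at the last step means $B^{(r)}_{0,1}$ is void'' conflates directions. What your argument actually yields is the contrapositive: if $R_i$ is singular then $\mathcal W_i\subsetneq\mathcal W_{i+1}$; hence a strict chain in $\mathbb R^n$ of length exceeding $n$ is impossible, and therefore some $R_r$ with $r\le n$ must already be positive definite. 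You do not need to (and cannot) read off ``$B^{(r)}_{0,1}$ is void'' from equality of the $\mathcal W$'s.

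Second, and more substantively, you yourself flag the real gap: the step ``$R_i$ singular and $\mathrm{Im}\,B^{(i)}_{0,1}\subseteq\mathcal W_i$ $\Rightarrow$ infinite-dimensional null/nonpositive subspace for $J_0^T$'' is only sketched. The heuristic with high-frequency kernel-controls is suggestive but, as written, does not produce a bona fide \emph{subspace} (linear structure is not preserved under your frequency parameter), and the claim that the pulled-back boundary and cross terms from Proposition~\ref{P6} are ``negligible'' needs to be made exact --- those terms are quadratic in $\phi^{i+1}u_j(T)$ and do not vanish for generic high-frequency inputs. In \cite{Guer} the corresponding implication is obtained algebraically: Assumption~\ref{nogauge} forces, at every step, that the kernel-block contribute genuinely new state directions (this is what underlies the uniqueness statement preceding Proposition~\ref{thJurdj}, ``there exists a \emph{unique} distribution $\Delta$\ldots''). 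Turning your heuristic into that algebraic statement --- essentially that $\mathrm{Im}\,B^{(i)}_{0,1}\cap\mathcal W_i\neq\mathrm{Im}\,B^{(i)}_{0,1}$ whenever $R_i$ is singular --- is the missing ingredient; once you have it, your flag argument goes through verbatim.
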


In \cite{Guer} the integer $r$ is called \textbf{order of singularity} of
the problem (\ref{ls})-(\ref{ep})-(\ref{j0tG}).

Without loss of generality, we may assume that the coordinates of the space
of control variables are such that the matrices $\left(
B_{i},Q_{i},R_{i}\right) $, obtained at each desingularization step, have
block structure
\begin{eqnarray*}
&& R_{i} = \mbox{diag}\left( R_{0,0},R_{1,1},\ldots ,R_{i,i},0\right) , \\
&& B_{i} = \left( B_{0,0}\ B_{1,1}\ \cdots \ B_{i,i}\ B_{i,i+1}\ \cdots \
B_{i,r}\right) , \\
&& Q_{i}^{\prime } = \left( Q_{0,0}^{\prime }\ Q_{1,1}^{\prime }\ \cdots \
Q_{i,i}^{\prime }\ Q_{i,i+1}^{\prime }\ \cdots \ Q_{i,r}^{\prime }\right) .
\end{eqnarray*}%
Let us introduce operator $\gamma _{r}=\left( \gamma _{r,0},\gamma
_{r,1},...\gamma _{r,r}\right) :L_{2,loc}^{k}\mapsto L_{2,loc}^{k}$ as
follows
\begin{eqnarray}
&& \gamma _{r,i}u = \begin{array}[t]{l} \phi ^{i}u_{i}+
R_{i,i}^{-1}\sum\limits_{i\leq j<l\leq r}\left(
Q_{i,i}B_{j,l}-B_{i,i}^{\prime }Q_{j,l}^{\prime }\right) \phi
^{j+1}u_{l}, \\
\hspace{6cm} i=0,1,...,(r-1);
\end{array}  \notag \\
&& \gamma _{r,r}u = \phi ^{r}u_{r}.  \label{gr}
\end{eqnarray}%
This operator is used in the next Subsection to introduce a suitable
topology in the space of generalized controls.

Applying Proposition~\ref{P6} consequently $r$ times, we arrive to
the following corollary.

\begin{proposition}
\label{despr} The trajectory $x_{x_{0},u}$ can be represented as
\begin{equation}
x_{x_{0},u}=x_{x_{0},\gamma _{r}u}^{r}+\sum\limits_{0\leq i<j\leq
r}B_{i,j}\phi ^{i+1}u_{j},  \label{xred}
\end{equation}%
where $x_{x_{0},\gamma _{r}u}^{r}$ denotes the trajectory of the system
\begin{equation}
\dot{x}=Ax+B_{r}\gamma _{r}u,\qquad x(0)=x_{0}.  \label{eqgr}
\end{equation}

If Assumption~\ref{nogauge} holds and $T<+\infty $, then the
functional $ J_{0}^{T}$ can be represented as
\begin{eqnarray*}
J_0^T (u) &=& \int _0^T {x^r_{\gamma_r u}} ^{\prime}P x^r_{\gamma_r
u} + 2 \gamma_r u ^{\prime}Q_r x^r_{\gamma_r u} + \gamma_r u
^{\prime}R_r \gamma_r
u \, dt + \\
&& + 2 \sum \limits_{0 \leq i < j \leq r}
\phi^{i+1}u_j(T)^{\prime}Q_{i,j}
x^r_{\gamma_r u} (T) + \\
&& +\left( \sum \limits_{0 \leq i < j \leq r}
\phi^{i+1}u_j(T)^{\prime}Q_{i,j} \right) \cdot \left( \sum
\limits_{0 \leq i < j \leq r} B_{i,j} \phi^{i+1}u_j(T)\right),
\end{eqnarray*}
where $R_r>0. \ \Box$
\end{proposition}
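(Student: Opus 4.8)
**

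The plan is to obtain Proposition~\ref{despr} by iterating Proposition~\ref{P6} exactly $r$ times and then bookkeeping carefully the resulting sum of boundary/correction terms. The reduction of the trajectory, equation \eqref{xred}, and the dynamics \eqref{eqgr} come directly from applying the substitution $u \mapsto \gamma u$ of Proposition~\ref{P6} repeatedly: at the first step Proposition~\ref{P6} replaces the $5$-ple $(A,B,P,Q,R)$ by $(A,B_1,P,Q_1,R_1)$ and writes $x_{x_0,u}=x^1_{x_0,\gamma u}+B_{0,1}\phi u_1$; at step $i$ one applies it to the desingularized problem $(A,B_i,P,Q_i,R_i)$, splitting off a further term $B_{i,i+1}\phi u_{i+1}$ (after re-choosing coordinates so that the block structure for $(B_i,Q_i,R_i)$ displayed above holds). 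The composite of the $r$ substitutions is precisely the operator $\gamma_r$ of \eqref{gr}: one should verify that the nested primitivizations $\phi^{j+1}$ and the coefficient matrices $R_{i,i}^{-1}(Q_{i,i}B_{j,l}-B_{i,i}'Q_{j,l}')$ in \eqref{gr} are exactly what the telescoping composition produces. By Proposition~\ref{P8}, after $r$ steps $R_r>0$, so the procedure terminates and \eqref{eqgr} is the final, regular system.

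For the representation of $J_0^T$, I would again proceed inductively. At each desingularization step Proposition~\ref{P6} produces, besides the ``main'' quadratic part in the new variables, (i) an antisymmetric-looking cross term $\int_0^T u_1'(Q_{0,1}B_{0,1}-B_{0,1}'Q_{0,1}')\phi u_1\,d\tau$, which vanishes because Assumption~\ref{nogauge} forces \eqref{Z13} (i.e.\ $Q_{0,1}B_{0,1}-B_{0,1}'Q_{0,1}'=0$), and (ii) genuine boundary terms of the form $2\phi u_1(T)'Q_{0,1}x^1_{\gamma u}(T)+\phi u_1(T)'Q_{0,1}B_{0,1}\phi u_1(T)$. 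Iterating, the surviving interior part after $r$ steps is $\int_0^T (x^r_{\gamma_r u})'P x^r_{\gamma_r u}+2(\gamma_r u)'Q_r x^r_{\gamma_r u}+(\gamma_r u)'R_r\gamma_r u\,dt$ with $R_r>0$, while all the boundary contributions accumulate. Using the trajectory decomposition \eqref{xred} evaluated at $t=T$, namely $x_{x_0,u}(T)=x^r_{\gamma_r u}(T)+\sum_{0\le i<j\le r}B_{i,j}\phi^{i+1}u_j(T)$, each boundary term $2\,\phi^{i+1}u_j(T)'Q_{i,j}\,x^r_{\gamma_r u}(T)$ together with the quadratic completion $\big(\sum_{0\le i<j\le r}\phi^{i+1}u_j(T)'Q_{i,j}\big)\cdot\big(\sum_{0\le i<j\le r}B_{i,j}\phi^{i+1}u_j(T)\big)$ is seen to reconstruct the boundary part of $\int x'Px$ that was ``peeled off'' along the way; collecting these over the $r$ steps yields precisely the two boundary sums in the statement.

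The main obstacle, I expect, is the combinatorial bookkeeping: one must show that the nested primitivizations and the double sums over $0\le i<j\le r$ produced by composing $r$ copies of Proposition~\ref{P6} organize exactly into the operator $\gamma_r$ of \eqref{gr} and into the two boundary sums of the Proposition, with no stray cross-terms between $\phi^{i+1}u_j(T)$ and $\gamma_r u(T)$ beyond those displayed. The key points to pin down are: that \eqref{Z13} (and the analogous identities $Q_{i,j}B_{i,j}-B_{i,j}'Q_{i,j}'=0$ at each level, which follow from applying Assumption~\ref{nogauge} to the successively desingularized $5$-ples via Proposition~\ref{P6}) kills all the would-be interior antisymmetric terms; and that the block structure chosen for $(B_i,Q_i,R_i)$ makes the coefficients line up. Since Proposition~\ref{P8} guarantees $R_r>0$, the final interior quadratic form is strictly convex in $\gamma_r u$, which is all that is asserted. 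The argument is therefore a finite induction whose inductive step is a single invocation of Proposition~\ref{P6} plus the vanishing identity \eqref{Z13}; the only real work is making the notation of \eqref{gr}--\eqref{xred} match the output of the iteration.
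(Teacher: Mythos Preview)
Your proposal is correct and follows the same approach as the paper: the paper simply states that Proposition~\ref{despr} is obtained ``applying Proposition~\ref{P6} consequently $r$ times,'' with Proposition~\ref{P8} supplying $R_r>0$. Your write-up is a faithful elaboration of that iteration, including the correct observation that Assumption~\ref{nogauge} (via \eqref{Z13} at each step) kills the antisymmetric interior terms.
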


It turns out that the nonintegral terms in the latter representation
of $ J_0^T$ depend only on $T$, $x_0$ and $x_T$, and hence
\begin{eqnarray}\label{J0Tred}
J_0^T (u) &=&   \int _0^T {x^r_{\gamma_r u}} ^{\prime}P
x^r_{\gamma_r u} +2 \gamma_r u ^{\prime}Q_r x^r_{\gamma_r u} +
\gamma_r u ^{\prime}R_r \gamma_r u \, dt + \\
&& +C_r^T \left( x_0, x_T \right), \nonumber
\end{eqnarray}
where $C_r^T \left( x_0, x_T \right) $ is quadratic with respect to
$\left( x_0, x_T \right)$.

\begin{remark}
Recall that in the infinite-time horizon version of the problem we
assume nonnegativeness of the quadratic form $\left( x,u\right)
\mapsto x^{\prime }Px+2u^{\prime }Qx+u^{\prime }Ru$. In this case
the nonintegral terms vanish and the functional takes form \[
J_{0}^{\infty }(u)= \int_{0}^{\infty }{x_{\gamma _{r}u}^{r}}^{\prime
}Px_{\gamma _{r}u}^{r}+2\gamma _{r}u^{\prime }Q_{r}x_{\gamma
_{r}u}^{r}+ \gamma
_{r}u^{\prime }R_{r}\gamma _{r}u\,dt.%
\]
\end{remark}

\subsection{Weak norms, generalized controls and trajectories\label{SS2c}}

Consider the following norms in the space of inputs $L_{2}^{k}[0,T]$ and in
the space of trajectories $L_{2}^{n}[0,T]$ (we embed absolutely continuous
trajectories in $L_{2}^{n}$):
\begin{eqnarray*}
&& \left\Vert u\right\Vert _{\gamma _{r}[0,T]} = \left\Vert \gamma
_{r}u\right\Vert _{L_{2}^{k}[0,T]}, u\in L_{2}^{k}[0,T]; \\
&& \left\Vert u\right\Vert _{\overline{\gamma }_{r}[0,T]} = \hspace{-0.2cm}
\begin{array}[t]{l}
\left\Vert \gamma _{r}u\right\Vert _{L_{2}^{k}[0,T]}+\sum\limits_{1\leq
i\leq j\leq r}\left\vert \phi ^{i}u_{j}(T)\right\vert ,\ u\in L_{2}^{k}[0,T];%
\end{array}
\\
&& \left\Vert x\right\Vert _{H_{-r}^{n}[0,T]} = \left\Vert \phi
^{r}x\right\Vert _{L_{2}^{n}[0,T]},\qquad x\in L_{2}^{n}[0,T].
\end{eqnarray*}%
Fix $T\in ]0,+\infty \lbrack $ and let

\begin{itemize}
\item $\mathcal{U}_{\gamma _{r}[0,T]}$ be the topological completion of $%
L_{2}^{k}[0,T]$ with respect to $\left\Vert \cdot \right\Vert _{\gamma
_{r}[0,T]}$; \smallskip

\item $\mathcal{U}_{\overline{\gamma }_{r}[0,T]}$ be the topological
completion of $L_{2}^{k}[0,T]$ with respect to $\left\Vert \cdot \right\Vert
_{\overline{\gamma }_{r}[0,T]}$; \smallskip

\item $H^n_{-r}[ 0,T ]$ be the topological completion of $L_{2}^{n}[0,T]$
with respect to $\left\Vert \cdot \right\Vert _{H_{-r}^{n}[0,T]}$.
\end{itemize}

\smallskip

The following holds true (see \cite{Guer})

\begin{proposition}
\label{cont} The input-to-trajectory map $u\mapsto x_{x_{0},u}$, is
uniformly continuous with respect to the norm $\left\Vert \cdot \right\Vert
_{\gamma _{r}[0,T]}$ of inputs and the norm $\left\Vert \cdot \right\Vert
_{H_{-(r-1)}^{n}[0,T]}$ of trajectories. The functional $J_{0}^{T}(u)$ is
locally uniformly continuous in the norm $\left\Vert \cdot \right\Vert _{%
\overline{\gamma }_{r}[0,T]}$ of inputs. $\square $
\end{proposition}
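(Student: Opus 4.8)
The plan is to prove continuity by pushing everything through the desingularization representation \eqref{xred}--\eqref{eqgr} established in Proposition~\ref{despr}, so that we only ever deal with the \emph{regular} system \eqref{eqgr} driven by $v = \gamma_r u$ and with the explicit polynomial-in-time correction terms $\sum_{0\le i<j\le r} B_{i,j}\phi^{i+1}u_j$. First I would observe that $u \mapsto x^r_{x_0,\gamma_r u}$ factors as $u \mapsto \gamma_r u = v \mapsto x^r_{x_0,v}$, where the second map is the input-to-trajectory map of the \emph{linear} time-invariant system $\dot x = Ax + B_r v$. For such a linear system the variation-of-constants formula gives $x^r_{x_0,v}(t) = e^{tA}x_0 + \int_0^t e^{(t-s)A}B_r v(s)\,ds$, so $\|x^r_{x_0,v} - x^r_{x_0,\tilde v}\|_{L_2^n[0,T]} \le K_T \|v - \tilde v\|_{L_2^k[0,T]}$ with $K_T$ depending only on $T$, $A$, $B_r$ (Young's inequality for the convolution). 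Since $\|\cdot\|_{H^n_{-(r-1)}[0,T]} = \|\phi^{r-1}\cdot\|_{L_2^n}$ and $\phi$ is bounded on $L_2[0,T]$, a fortiori $\|x^r_{x_0,v} - x^r_{x_0,\tilde v}\|_{H^n_{-(r-1)}} \le K_T' \|v-\tilde v\|_{L_2^k} = K_T' \|u - \tilde u\|_{\gamma_r[0,T]}$. That is exactly the uniform continuity of $u \mapsto x^r_{x_0,\gamma_r u}$ claimed in the first sentence.

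Next I would handle the correction terms in \eqref{xred}. Each summand is $B_{i,j}\phi^{i+1}u_j$ with $0\le i<j\le r$; since $i+1 \le r$, I want to bound $\|\phi^{i+1}u_j\|_{H^n_{-(r-1)}[0,T]} = \|\phi^{r-1}\phi^{i+1}u_j\|_{L_2} = \|\phi^{r+i}u_j\|_{L_2}$ in terms of $\|\gamma_r u\|_{L_2}$. Inspecting the definition \eqref{gr} of $\gamma_{r,\bullet}$, the component $\gamma_{r,j}u$ contains the term $\phi^j u_j$ (plus, if $j<r$, a correction $R_{j,j}^{-1}\sum_{j\le l'<l\le r}(\cdots)\phi^{l'+1}u_l$); crucially, the leading term $\phi^j u_j$ is the one with the fewest primitivizations among the $u$-components appearing, so by a triangular/back-substitution argument one recovers $\phi^j u_j$, and hence $\phi^{r-1}(B_{i,j}\phi^{i+1}u_j) = B_{i,j}\phi^{i+1}\phi^{r-1-j}(\phi^j u_j)$ — valid since $i+1 \le j$ forces $i+1+(r-1-j) = r-j+i \ge 0$ and actually $\le r-1$ — is a bounded function of the $\phi^{l}u_{l}$'s, all of which are controlled by $\|\gamma_r u\|_{L_2}$. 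So the correction terms are also uniformly continuous from $\|\cdot\|_{\gamma_r[0,T]}$ into $\|\cdot\|_{H^n_{-(r-1)}[0,T]}$, and summing gives the first assertion of the Proposition for $x_{x_0,u}$ itself.

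For the second assertion, local uniform continuity of $J_0^T$ in the $\|\cdot\|_{\overline\gamma_r[0,T]}$ norm, I would use the representation \eqref{J0Tred}: $J_0^T(u) = \int_0^T (x^r_{\gamma_r u})'P x^r_{\gamma_r u} + 2(\gamma_r u)'Q_r x^r_{\gamma_r u} + (\gamma_r u)'R_r\gamma_r u\,dt + C_r^T(x_0,x_T)$. The last term is constant in $u$. The integral is a continuous quadratic form in the pair $(\gamma_r u, \, x^r_{\gamma_r u})$ in the $L_2 \times L_2$ topology; by the linear-system estimate above, $x^r_{\gamma_r u}$ is Lipschitz in $v=\gamma_r u$ in the $L_2$ norm, and a quadratic form is locally Lipschitz (it is $C^1$ with locally bounded derivative on bounded sets), so the whole integral is locally uniformly continuous in $\|\gamma_r u\|_{L_2} = \|u\|_{\gamma_r[0,T]} \le \|u\|_{\overline\gamma_r[0,T]}$. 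Finally — and this is where the extra boundary terms $|\phi^i u_j(T)|$ in $\|\cdot\|_{\overline\gamma_r[0,T]}$ are needed — the non-integral boundary terms in Proposition~\ref{despr} (before they were absorbed into $C_r^T$) are precisely polynomials in the evaluations $\phi^{i+1}u_j(T)$ and in $x^r_{\gamma_r u}(T)$; these are exactly the quantities the $\overline\gamma_r$-norm controls, so they too are locally uniformly continuous. The main obstacle, and the one step deserving genuine care rather than routine estimation, is the back-substitution in the previous paragraph: showing that the triangular system \eqref{gr} can be inverted so that each $\phi^j u_j$ (and more generally each $\phi^l u_l$, $l\ge j$) is expressed as a \emph{bounded} linear combination of the components of $\gamma_r u$, with no loss of primitivizations — i.e.\ that the inverse of $\gamma_r$ does not require differentiation. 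This is where the specific block/triangular structure of $\gamma_r$ and the invertibility of the diagonal blocks $R_{i,i}$ (guaranteed by Proposition~\ref{P8} and the normalization preceding \eqref{gr}) is essential, and it is the only place the argument is not a one-line appeal to boundedness of $\phi$ on $L_2[0,T]$ and Young's convolution inequality.
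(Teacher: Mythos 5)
The paper itself does not prove Proposition~\ref{cont}: it is imported from \cite{Guer} ("The following holds true (see \cite{Guer})"), so there is no internal proof to compare against. Your strategy is the natural one — route everything through \eqref{xred} and the representation of $J_0^T$ in Proposition~\ref{despr}, handle $x^r_{x_0,\gamma_r u}$ by variation of constants, and control the corrections $B_{i,j}\phi^{i+1}u_j$ by inverting the triangular system \eqref{gr} — and your treatment of the main term, of the integral part of the functional, and of the boundary terms (which is indeed exactly what the summands $|\phi^i u_j(T)|$ in $\|\cdot\|_{\overline\gamma_r}$ are for) is correct.

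The gap is in the step you yourself single out as the crux. You assert that back-substitution in \eqref{gr} expresses each $\phi^j u_j$ as a \emph{bounded} linear combination of the components of $\gamma_r u$, "with no loss of primitivizations". As stated this is false. The correction in $\gamma_{r,j}u$ contains terms $\phi^{j'+1}u_l$ with $j\le j'<l\le r$; whenever $j'+1<l$ such a term is a genuine \emph{derivative} of the quantity $\phi^l u_l$ that the lower rows of the triangular system control, and a derivative is not bounded in $L_2$ by the function. Concretely, for $r=3$,
\[
\gamma_{3,1}u=\phi u_1+R_{1,1}^{-1}\left[c_{1,2}\,\phi^2u_2+c_{1,3}\,\phi^2u_3+c_{2,3}\,\phi^3u_3\right],\qquad c_{j,l}=Q_{1,1}B_{j,l}-B_{1,1}^{\prime}Q_{j,l}^{\prime},
\]
and $\phi^2u_3=\frac{d}{dt}\,\phi^3u_3=\frac{d}{dt}\,\gamma_{3,3}u$ is not controlled by $\|\gamma_3u\|_{L_2}$; hence $\|\phi u_1\|_{L_2}$ is not controlled unless $c_{1,3}=0$ (already at $r=2$ the component $u_0$ is lost because of the term $\phi u_2$ in $\gamma_{2,0}u$). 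The invertibility of the diagonal blocks $R_{i,i}$ is not the issue. What saves your argument is that you never actually need $\phi^j u_j$: the $H_{-(r-1)}$-norm of $B_{i,j}\phi^{i+1}u_j$ only requires $\phi^{r+i}u_j$ with $i\ge 0$, i.e. at least $r\ge l$ primitivizations of every component. So the correct claim is $\|\phi^{r}u_j\|_{L_2}\le C\|\gamma_r u\|_{L_2}$ for $j=1,\dots,r$, proved by downward induction on $j$ after applying $\phi^{r-j}$ to the identity $\phi^j u_j=\gamma_{r,j}u-R_{j,j}^{-1}\sum(\cdots)\phi^{j'+1}u_l$ \emph{before} substituting: every term then becomes $\phi^{r-j+j'+1}u_l=\phi^{\,j'-j+1}\bigl(\phi^{r}u_l\bigr)$ with $j'-j+1\ge 1$ and $l>j$, a genuine primitivization of a quantity already controlled by the induction hypothesis. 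With this reorganisation (primitivize first, then invert) your proof goes through; as written, the asserted differentiation-free inverse of $\gamma_r$ does not exist.
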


As a corollary we obtain.

\begin{proposition}
The input-to-trajectory map $u\mapsto x_{x_{0},u}$ admits a unique
continuous extension with domain $\mathcal{U}_{\gamma _{r}[0,T]}$
and range in $H_{-(r-1)}^{n}[0,T]$, while the functional
$J_{0}^{T}(u)$ admits a unique continuous extension onto
$\mathcal{U}_{\overline{\gamma }_{r}[0,T]}$. These extensions can be
defined by equalities (\ref{J0Tred}) and (\ref{xred}), respectively.
$\square $
\end{proposition}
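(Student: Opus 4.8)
The plan is to upgrade the density/continuity statements of Proposition~\ref{cont} from ``uniform continuity on $L_2^k$'' to ``unique continuous extension to the completion'', which is exactly the abstract situation where a uniformly continuous map between metric spaces extends to the completion of its domain. First I would recall that $L_2^k[0,T]$ is, by construction, a dense subset of each of $\mathcal{U}_{\gamma_r[0,T]}$, $\mathcal{U}_{\overline{\gamma}_r[0,T]}$ and $H^n_{-r}[0,T]$, these being defined as the topological completions with respect to the norms $\|\cdot\|_{\gamma_r}$, $\|\cdot\|_{\overline{\gamma}_r}$ and $\|\cdot\|_{H_{-r}^n}$ respectively. The target spaces are complete metric (indeed Banach) spaces. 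Hence the classical extension theorem applies: a map that is uniformly continuous on a dense subset of a metric space, with values in a complete metric space, admits one and only one continuous extension to the whole space, and that extension is again uniformly continuous (locally, in the functional case). This gives existence and uniqueness of both extensions at one stroke.

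Next I would verify the two hypotheses needed to invoke that theorem for each map. For $u\mapsto x_{x_0,u}$: by Proposition~\ref{cont} this map is uniformly continuous from $(L_2^k[0,T],\|\cdot\|_{\gamma_r[0,T]})$ into $(L_2^n[0,T],\|\cdot\|_{H_{-(r-1)}^n[0,T]})$; since $H_{-(r-1)}^n[0,T]$ is complete (it is itself a completion), the extension exists on all of $\mathcal{U}_{\gamma_r[0,T]}$ with range in $H_{-(r-1)}^n[0,T]$. For $u\mapsto J_0^T(u)$: Proposition~\ref{cont} gives local uniform continuity in the $\|\cdot\|_{\overline{\gamma}_r[0,T]}$ norm, and $\mathbb{R}$ is complete, so the same argument (applied on each bounded set, then patched, local uniform continuity being enough because the completion is covered by closures of bounded sets) yields a unique continuous extension to $\mathcal{U}_{\overline{\gamma}_r[0,T]}$.

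Finally I would identify the extensions with the explicit formulas (\ref{xred}) and (\ref{J0Tred}). The point is that the right-hand sides of both formulas are expressed entirely through the operators $\phi^{i+1}u_j$ and $\gamma_r u$, together with the trajectory $x^r_{\gamma_r u}$ of the \emph{regular} system (\ref{eqgr}) whose cost $R_r>0$ is strictly convex; each of these ingredients is, by inspection, continuous in the relevant weak norm ($\phi^{i+1}u_j$ and $\gamma_r u$ are continuous by definition of $\|\cdot\|_{\overline{\gamma}_r}$ and $\|\cdot\|_{\gamma_r}$, and $v\mapsto x^r_{x_0,v}$ is continuous in the $L_2\to H_{-(r-1)}$ sense for the regular system). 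Hence the right-hand sides of (\ref{xred}) and (\ref{J0Tred}) define continuous maps on $\mathcal{U}_{\gamma_r[0,T]}$ and $\mathcal{U}_{\overline{\gamma}_r[0,T]}$ which agree with $x_{x_0,u}$ and $J_0^T(u)$ on the dense subset $L_2^k[0,T]$; by uniqueness of the continuous extension they coincide with it everywhere.

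The main obstacle is the bookkeeping in the last step: one must check that every term appearing in (\ref{xred}) and (\ref{J0Tred}) is genuinely continuous in the chosen weak norm, in particular that the boundary terms $\phi^{i+1}u_j(T)$ are controlled by $\|\cdot\|_{\overline{\gamma}_r[0,T]}$ (which is why the evaluation functionals are built into that norm) and that the quadratic cross-terms, being continuous bilinear expressions in these controlled quantities, remain continuous on bounded sets — this is where the \emph{local} rather than global uniform continuity in Proposition~\ref{cont} enters and must be handled with care. Everything else is the standard ``uniformly continuous map extends to the completion'' argument.
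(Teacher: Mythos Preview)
Your proposal is correct and matches the paper's approach: the paper states this proposition simply as a corollary of Proposition~\ref{cont} with no further argument, and what you have written is precisely the standard ``uniformly continuous on a dense subspace, target complete, hence unique continuous extension to the completion'' reasoning that justifies that corollary, together with the check that the right-hand sides of (\ref{xred}) and (\ref{J0Tred}) are themselves continuous in the relevant norms and so must coincide with the abstract extensions. Your handling of the \emph{local} uniform continuity of $J_0^T$ (extending on bounded sets, which suffices since the completion is the union of their closures) is the right way to deal with the quadratic terms, and your observation that the boundary evaluations $\phi^{i+1}u_j(T)$ are controlled precisely because they are built into $\|\cdot\|_{\overline{\gamma}_r}$ is exactly the point.
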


We denote $\mathcal{U}_{\gamma _{r}[0,+\infty [ }$ the topological
completion of $L_{2,loc}^{k}$ with respect to convergence in all the
norms $ \left\| \cdot \right\| _{\gamma _{r}[0,T]}$, (with $r$ fixed
like in Proposition \ref{P8} and $T$ ranging in $]0, +\infty[$).
Similarly, $H_{-r}^{n}[0,+\infty [ $ is the topological completion
of $L_{2,loc}^{n}$ with respect to convergence in all the norms $
\left\| \cdot \right\| _{H_{-r}^{n}[0,T]}$ ($T < +\infty$, $r$
fixed).

\begin{proposition}
\label{extinf} The map $u\mapsto x_{x_{0},u}$ admits a unique continuous
extension onto $\mathcal{U}_{\gamma _{r}[0,+\infty \lbrack }$. If the
quadratic form $(x,u)\mapsto x^{\prime }Px+2u^{\prime }Qx+u^{\prime }Ru$ is
nonnegative, then the map $u\mapsto J_{0}^{+\infty }(u)$ admits a unique
extension onto $\mathcal{U}_{\gamma _{r}[0,+\infty \lbrack }$. $\square$
\end{proposition}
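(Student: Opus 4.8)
The plan is to obtain the extension onto $\mathcal{U}_{\gamma_r[0,+\infty[}$ by \emph{gluing together} the finite-horizon extensions provided by Proposition~\ref{cont} and its corollary. Recall that $\mathcal{U}_{\gamma_r[0,+\infty[}$ carries the topology of simultaneous convergence in all the seminorms $\|\cdot\|_{\gamma_r[0,T]}$, $T\in\,]0,+\infty[$, and $H_{-(r-1)}^n[0,+\infty[$ the topology of convergence in all the seminorms $\|\cdot\|_{H_{-(r-1)}^n[0,T]}$. For each fixed $T<+\infty$, restriction to $[0,T]$ gives a continuous linear map $\mathcal{U}_{\gamma_r[0,+\infty[}\to\mathcal{U}_{\gamma_r[0,T]}$, and composing with the finite-horizon extension of $u\mapsto x_{x_0,u}|_{[0,T]}$ we get, for each $T$, a continuous map into $H_{-(r-1)}^n[0,T]$. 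The key consistency point is that for $u\in L_{2,loc}^k$ the restriction of $x_{x_0,u}$ to $[0,T]$ depends only on $u|_{[0,T]}$ (causality of the linear ODE~\eqref{ls}), and by continuity this causal compatibility persists on the completion: the images in $H_{-(r-1)}^n[0,T]$ for varying $T$ are compatible under restriction. Hence they patch to a single element of $H_{-(r-1)}^n[0,+\infty[$, and one checks the resulting map is continuous because it is continuous into each $H_{-(r-1)}^n[0,T]$ by construction. Uniqueness is immediate since $L_{2,loc}^k$ is dense in $\mathcal{U}_{\gamma_r[0,+\infty[}$ and any two continuous extensions agreeing on a dense set coincide.

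For the functional, the idea is the same but one must exploit the extra hypothesis of nonnegativity of the quadratic form $(x,u)\mapsto x'Px+2u'Qx+u'Ru$. Under this assumption the nonintegral (boundary) terms in the representation of Proposition~\ref{despr} vanish (as noted in the Remark following that proposition), so on $L_{2,loc}^k$ one has the clean formula
\[
J_0^\infty(u)=\int_0^\infty {x^r_{\gamma_r u}}'\,P\,x^r_{\gamma_r u}+2\,\gamma_r u\,'Q_r\,x^r_{\gamma_r u}+\gamma_r u\,'R_r\,\gamma_r u\,dt,
\]
with $R_r>0$; moreover each partial integral $J_0^T(u)$ over $[0,T]$ is itself nonnegative for all $T$ (restricting a nonnegative running cost to a subinterval keeps it nonnegative once the boundary terms are gone), so $J_0^\infty(u)=\sup_{T<+\infty}J_0^T(u)=\lim_{T\to+\infty}J_0^T(u)$ is a monotone limit of finite-horizon functionals. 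Each $J_0^T$ extends continuously to $\mathcal{U}_{\gamma_r[0,T]}$ — here one uses $\|\cdot\|_{\gamma_r[0,T]}$ rather than $\|\cdot\|_{\overline\gamma_r[0,T]}$, which is legitimate precisely because the $\overline\gamma_r$–extra endpoint terms $\phi^i u_j(T)$ enter only through the now-vanishing boundary contribution. Composing with restriction $\mathcal{U}_{\gamma_r[0,+\infty[}\to\mathcal{U}_{\gamma_r[0,T]}$ gives, for each $T$, a continuous nonnegative functional on $\mathcal{U}_{\gamma_r[0,+\infty[}$; these are monotone nondecreasing in $T$ along the dense subset $L_{2,loc}^k$, hence (by continuity) monotone nondecreasing everywhere, so their pointwise supremum defines $J_0^\infty$ on all of $\mathcal{U}_{\gamma_r[0,+\infty[}$. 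This extends the functional and agrees with the original on $L_{2,loc}^k$; uniqueness again follows from density.

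The step I expect to be the main obstacle — and the one deserving genuine care rather than a routine wave — is continuity of the extended functional, as opposed to mere well-definedness. A supremum of continuous functions is only lower semicontinuous in general, so one cannot simply invoke the monotone limit. What rescues it is that on each fixed horizon the tail is controlled: for $u$ in the completion, the trajectory lies in $H_{-(r-1)}^n[0,+\infty[$, and using the local stabilizability / decay structure one argues that $\int_T^\infty$ of the (nonnegative) running cost tends to $0$ uniformly on $\|\cdot\|_{\gamma_r}$–bounded neighborhoods as $T\to+\infty$; equivalently, the monotone convergence $J_0^T\uparrow J_0^\infty$ is locally uniform, which upgrades lower semicontinuity to continuity. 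Concretely I would: (i) fix $\bar u\in\mathcal{U}_{\gamma_r[0,+\infty[}$ and $\delta>0$; (ii) choose $T$ with $J_0^\infty(\bar u)-J_0^T(\bar u)<\delta$ and, by the uniform tail estimate, with $J_0^\infty(u)-J_0^T(u)<\delta$ for all $u$ in a fixed neighborhood of $\bar u$; (iii) use continuity of $J_0^T$ on that neighborhood to make $|J_0^T(u)-J_0^T(\bar u)|<\delta$ on a possibly smaller neighborhood; (iv) combine via the triangle inequality. Granting the uniform-tail lemma (whose proof is the technical heart and runs parallel to the decay estimates already used in the proof of Theorem~\ref{lav}), the rest is the soft functional-analytic patching described above.
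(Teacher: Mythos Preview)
The paper itself does not prove this proposition: the $\square$ closes the statement, and the results of Subsections~\ref{SS2c}--\ref{SS2d} are collectively deferred to \cite{Guer} (``For the proofs of these results, see \cite{Guer}''). So there is no in-paper argument to compare against, and your sketch must stand on its own.

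Your treatment of the input-to-trajectory map is correct and is the natural argument: the finite-horizon continuous extensions supplied by Proposition~\ref{cont} are compatible under restriction by causality of~\eqref{ls}, hence glue to a continuous map into $H_{-(r-1)}^n[0,+\infty[$, and density of $L_{2,loc}^k$ forces uniqueness.

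For the functional there is a genuine gap in the continuity step. You propose to obtain the needed uniform tail decay ``parallel to the decay estimates already used in the proof of Theorem~\ref{lav}'', i.e.\ by invoking local stabilizability. But stabilizability is \emph{not} among the hypotheses of Proposition~\ref{extinf}; only nonnegativity of $(x,u)\mapsto x'Px+2u'Qx+u'Ru$ is assumed. Stabilizability enters the paper only later, in Proposition~\ref{CExUni}(iii), precisely to guarantee existence of a minimizer. Without it there is no reason the tails $\int_T^\infty$ of the desingularized integrand should vanish uniformly on $\gamma_r$-neighbourhoods; indeed $J_0^\infty$ may well be $+\infty$ on large sets of data. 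So your ``uniform-tail lemma'' cannot be proved from the stated hypotheses.

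Observe, however, that the statement is worded asymmetrically: for the trajectory map it asserts a ``unique \emph{continuous} extension'', whereas for $J_0^\infty$ it asserts only a ``unique extension''. Read together with the Remark following Proposition~\ref{despr}, the intended content is simply that the desingularized formula
\[
J_0^\infty(u)=\int_0^\infty {x^r_{\gamma_r u}}'P\,x^r_{\gamma_r u}+2(\gamma_r u)'Q_r\,x^r_{\gamma_r u}+(\gamma_r u)'R_r\,\gamma_r u\,dt
\]
makes sense (as a value in $[0,+\infty]$) for every $u\in\mathcal{U}_{\gamma_r[0,+\infty[}$, since $\gamma_r u\in L_{2,loc}^k$ and the integrand is nonnegative, and that it agrees with the original functional on $L_{2,loc}^k$. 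Your monotone-limit construction already delivers exactly this; the extra continuity argument you flag as ``the main obstacle'' is both unjustified under the available hypotheses and, on this reading, not actually being claimed.
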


Note that any function $v\in L_{2,loc}^{k}$ defines uniquely a
generalized control $u\in \mathcal{U}_{\overline{\gamma }_{r}[0,T]}$
such that $\gamma _{r}u=v$ and $\phi ^i u_j (T) = V_{i,j}$, $1 \leq
i \leq j \leq r$ ($V_{i,j}$ fixed constant vectors of suitable
dimensions); the metric in the space $u\in \mathcal{U}_{
\overline{\gamma }_{r}[0,T]}$ is induced by $L_{2}$-metric in the
space of $ v= \gamma _{r}u$.

\subsection{Desingularized LQ problems and generalized solutions\label{SS2d}}

According to Proposition \ref{despr} the problem
(\ref{ls})-(\ref{ep})-(\ref{j0tG}) can be transformed into the
following regular LQ problem with the control $v=\gamma _{r}u$:
\begin{eqnarray}
&& \hspace{-0.5cm} J_{red}^{T}(v) = \int_{0}^{T}x^{\prime
}Px+2v^{\prime }Q_{r}x+v^{\prime }R_{r}v\,dt
\smallskip \rightarrow \min,  \label{Jred} \\
&& \hspace{-0.5cm} \dot{x} = Ax+B_{r}v, \quad v\in
L_{2}^{k}[0,T],\qquad
x(0)=x_{0},  \label{xr} \\
&& \hspace{-0.5cm} x(T) \in x_{T}+\mathop{\mbox{span}}\nolimits\left\{
B_{i,j},\,0\leq i<j\leq r\right\} ,  \label{epred}
\end{eqnarray}%
with the endpoint condition (\ref{epred}) being dropped in case
$T=+\infty $. Since $R_r>0$, classical existence theory applies to
(\ref{Jred})-(\ref{xr})-(\ref{epred}).

\begin{definition}
\label{D coerc} A functional $\theta$ defined in a normed space
$(X,\| \cdot \| )$ is said to be quadratically coercive if there are
constants $a \in \mathbb R$, $b>0$ such that
\[
\theta( \xi ) \geq a +b \|\xi \|^2, \qquad \forall \xi \in X. \
\square
\]
\end{definition}

Due to the relationship $v=\gamma _{r}u$, between the new and the
original controls, the following results hold true for
(\ref{ls})-(\ref{ep})-(\ref{j0tG}).
\smallskip

\begin{proposition}
\label{thcoercive} Let Assumption \ref{nogauge} hold. Then:

\begin{itemize}
\item[i)] for each sufficiently small $T>0$, the functional $J_{0}^{T}(u)$
is quadratically coercive on $\left\{ u\in \mathcal{U}_{\overline{\gamma }
_{r}[0,T]}:\phi ^{i}u_{j}(T)=0, \ 1\leq i\leq j\leq r\right\} $; \smallskip

\item[ii)] for any $T>0$ the functional $J_{0}^{T}(u)$ is quadratically
coercive on a subspace of finite codimension in
$\mathcal{U}_{\overline{ \gamma } _{r}[0,T]}$; \smallskip

\item[iii)] If $T=+\infty $, and the quadratic form $u\mapsto x^{\prime
}Px+2u^{\prime }Qx+u^{\prime }Ru$ is nonnegative, then the
functional $ J_{0}^{\infty }(u)$ is quadratically coercive on
$\mathcal{U}_{\gamma _{r}[0,+\infty \lbrack }.\ \square $
\end{itemize}

\end{proposition}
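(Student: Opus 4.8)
The plan is to carry everything through the desingularization of Proposition~\ref{despr}, which rewrites $J_0^T$ in terms of the \emph{regular} reduced functional $v\mapsto J^T_{red}(v)=\int_0^T x'Px+2v'Q_rx+v'R_rv\,dt$ (along $\dot x=Ax+B_rv$, $x(0)=x_0$) evaluated at $v=\gamma_r u$, plus finitely many boundary terms in the vectors $\phi^{i}u_j(T)$. Assumption~\ref{nogauge} is used only to guarantee, via Propositions~\ref{P8} and~\ref{despr}, that $r$ (hence $\gamma_r$, $B_r$, $Q_r$, $R_r$) is well defined with $R_r>0$. I identify $\mathcal U_{\bar\gamma_r[0,T]}$ with $L_2^k[0,T]\oplus\mathbb R^N$ via $u\leftrightarrow(v,w)$, $v=\gamma_r u$, $w=(\phi^iu_j(T))_{1\le i\le j\le r}$, so that $\|u\|_{\bar\gamma_r}$ is equivalent to $(\|v\|_{L_2}^2+|w|^2)^{1/2}$. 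For finite $T$ the input-to-trajectory map $v\mapsto x^r_v$ is a Volterra (hence Hilbert--Schmidt, compact) operator $\mathcal K_T:L_2^k[0,T]\to L_2^n[0,T]$; writing $\Phi_T(v)$ for the quadratic part of $J^T_{red}$ taken with $x_0=0$, one gets $\Phi_T(v)=\langle R_r v,v\rangle+\langle\mathcal C_T v,v\rangle$ with $\mathcal C_T$ compact self-adjoint and $\langle R_r v,v\rangle\ge\lambda_{\min}(R_r)\|v\|_{L_2}^2$, while restoring $x_0\ne0$ only adds a term affine in $v$.

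For (i), note that on the subspace $\{w=0\}$ the boundary terms in Proposition~\ref{despr} disappear, so $J_0^T(u)=\Phi_T(\gamma_r u)+(\text{affine in }\gamma_r u)$, and there $\|u\|_{\bar\gamma_r}=\|\gamma_r u\|_{L_2}$; it therefore suffices to show $\Phi_T$ is quadratically coercive on $L_2^k[0,T]$ for small $T$. A direct estimate of the Volterra operator gives $\|\mathcal C_T\|_{\mathrm{op}}\le c_1T+c_2T^2\to0$ as $T\to0^+$, so for $T$ small $\Phi_T(v)\ge\bigl(\lambda_{\min}(R_r)-\|\mathcal C_T\|_{\mathrm{op}}\bigr)\|v\|_{L_2}^2\ge\tfrac12\lambda_{\min}(R_r)\|v\|_{L_2}^2$; absorbing the affine term by Young's inequality finishes this part.

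For (ii), work on all of $\mathcal U_{\bar\gamma_r[0,T]}$ and write $J_0^T(u)=\Phi_T(v)+2\langle\ell(w),v\rangle+q(w)+(\text{affine in }(v,w))$, where $\ell:\mathbb R^N\to L_2^k[0,T]$ is linear (the coupling passing through $x^r_v(T)$, a bounded linear functional of $v$) and $q$ is a quadratic form on $\mathbb R^N$. Since $\mathcal C_T$ is compact, $\sigma_{\mathrm{ess}}(R_r+\mathcal C_T)\subseteq[\lambda_{\min}(R_r),\lambda_{\max}(R_r)]$, so $R_r+\mathcal C_T$ has only finitely many eigenvalues below $\tfrac12\lambda_{\min}(R_r)$, with finite-dimensional total eigenspace $E$; on $E^\perp$ one has $\Phi_T(v)\ge\tfrac12\lambda_{\min}(R_r)\|v\|_{L_2}^2$. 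Restricting $v\in E^\perp$ and completing the square, $\Phi_T(v)+2\langle\ell(w),v\rangle\ge\tfrac14\lambda_{\min}(R_r)\|v\|_{L_2}^2-C|w|^2$. Finally restrict $w$ to the maximal subspace $W_+\subseteq\mathbb R^N$ on which $w\mapsto q(w)-C|w|^2$ is positive definite; then on the finite-codimension subspace $E^\perp\oplus W_+$ of $\mathcal U_{\bar\gamma_r[0,T]}$ one gets $J_0^T(u)\ge a+b(\|v\|_{L_2}^2+|w|^2)$, which is the asserted quadratic coercivity on a subspace of finite codimension.

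For (iii), the analogue of $\mathcal C_T$ on $L_2^k[0,+\infty[$ is no longer compact, so the Fredholm argument of (ii) fails and one must exploit nonnegativity. By the remark following Proposition~\ref{despr}, $J_0^\infty(u)=\Phi_\infty(\gamma_r u)+(\text{affine in }\gamma_r u)$ with $\Phi_\infty\ge0$ and $R_r>0$; completing the square via the Schur complement yields $\Phi_\infty(v)\ge\lambda_{\min}(R_r)\,\|v+R_r^{-1}Q_rx^r_v\|_{L_2[0,+\infty[}^2$ together with $\Phi_\infty(v)\ge\int_0^\infty (x^r_v)'(P-Q_r'R_r^{-1}Q_r)x^r_v\,dt$, where $P-Q_r'R_r^{-1}Q_r\ge0$. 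The plan is to combine these two bounds into $\Phi_\infty(v)\ge b\|v\|_{L_2[0,+\infty[}^2$ — equivalently, the classical statement that a nonnegative regular infinite-horizon LQ functional with $R_r>0$ is quadratically coercive (established in \cite{Guer} through the geometric reduction underlying the order of singularity). I expect this to be the main obstacle of the whole proposition: the Schur complement $P-Q_r'R_r^{-1}Q_r$ may be only positive \emph{semi}definite, so to control $\|x^r_v\|_{L_2}$ and hence $\|v\|_{L_2}$ one must track the $(A-B_rR_r^{-1}Q_r)$-invariant ``unobservable'' directions exactly as in \cite{Guer}; alternatively one pieces together the local coercivity of part~(i) on a short interval $[0,\tau]$ with the tail nonnegativity $\Phi_\infty(v)\ge\int_\tau^\infty(\cdots)\,dt\ge0$. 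Using $\|u\|_{\gamma_r}=\|\gamma_r u\|_{L_2[0,+\infty[}$ and Young's inequality for the affine term then concludes.
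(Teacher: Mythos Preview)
The paper does not actually prove Proposition~\ref{thcoercive}; after stating it the text says ``For the proofs of these results, see \cite{Guer}'', signalling only that the argument passes through the desingularization $v=\gamma_r u$ and the regular reduced problem (\ref{Jred})--(\ref{xr})--(\ref{epred}). Your proposal follows exactly that route, and parts (i) and (ii) are correct: the small-$T$ estimate $\|\mathcal C_T\|_{\mathrm{op}}=O(T)$ for the Volterra contribution is right, and for arbitrary finite $T$ the compact-perturbation argument (the essential spectrum of $R_r+\mathcal C_T$ lies in $[\lambda_{\min}(R_r),\lambda_{\max}(R_r)]$, so only finitely many eigenvalues fall below $\tfrac12\lambda_{\min}(R_r)$), followed by completion of the square in the finitely many $w$-variables, is the standard mechanism for turning $R_r>0$ into quadratic coercivity on a subspace of finite codimension.

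For part (iii), your main line --- reduce to coercivity of the regular nonnegative infinite-horizon LQ functional and invoke \cite{Guer} --- is precisely what the paper itself does. But the \emph{alternative} you float at the end does not work: gluing the coercivity of part (i) on $[0,\tau]$ to mere nonnegativity of the tail yields only $\Phi_\infty(v)\ge b\|v\|_{L_2[0,\tau]}^2-C$, a lower bound in the wrong norm; the inequality $\int_\tau^\infty(\cdots)\,dt\ge 0$ contributes nothing toward bounding $\|v\|_{L_2[\tau,\infty[}$, which is what coercivity in $\|\cdot\|_{\gamma_r[0,+\infty[}$ requires. A self-contained proof of (iii) genuinely needs the structural analysis you flag --- controlling the $\ker\bigl(P-Q_r'R_r^{-1}Q_r\bigr)$ component of $x^r_v$ through $(A-B_rR_r^{-1}Q_r)$-invariance --- which is the substance of the argument in \cite{Guer}.
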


Using Proposition \ref{thcoercive}, classical existence theory and the
relationship $v= \gamma _r u$ we obtain the following:

\begin{proposition}
\label{CExUni} Let Assumption \ref{nogauge} hold. \medskip
\newline
For the finite-horizon case: there exists $T_0 \in ] 0,+ \infty ] $
such that

\begin{itemize}
\item[i)] For each $T \in ] 0, T_0 [$, and every $x_{T}$ accessible from $%
x_{0}$, problem (\ref{ls})-(\ref{ep})-(\ref{j0tG}) admits a unique
generalized solution, $\left( \hat{ u}, \hat{x}\right) \in \mathcal{U}_{%
\overline{\gamma }_{r}[0,T]}\times H^n_{-(r-1)}[0,T]$; \smallskip

\item[ii)] For any $T>T_0$, and every $x_T$ accessible from $x_0$, $%
\inf\limits_u J^T_0 = - \infty $ holds;
\end{itemize}

\noindent For the infinite-horizon case:

\begin{itemize}
\item[iii)] If the quadratic form $(x,u) \mapsto x^{\prime}Px+2u^{\prime}Qx
+u^{\prime}Ru$ is nonnegative and system (\ref{ls}) is feedback
stabilizable, then problem (\ref{ls})-(\ref{j0tG}) admits a unique
generalized solution, $\left( \hat u, \hat x \right) \in
\mathcal{U}_{\gamma _r [0,+ \infty[} \times H_{-(r-1)}[0,+ \infty[.
\ \square$
\end{itemize}

\end{proposition}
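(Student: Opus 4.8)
The plan is to exploit the desingularization machinery of Subsections~\ref{SS2b}--\ref{SS2d}, reducing each assertion about the singular problem \eqref{ls}-\eqref{ep}-\eqref{j0tG} to the corresponding assertion about the regular problem \eqref{Jred}-\eqref{xr}-\eqref{epred}, for which classical existence and uniqueness theory applies because $R_r > 0$. The only nontrivial analytic inputs are Proposition~\ref{thcoercive} (quadratic coercivity of $J_0^T$ in the appropriate weak topology) and the continuous-extension statements (Propositions~\ref{cont}, \ref{extinf}): everything else is a bookkeeping translation through the linear change of variables $v = \gamma_r u$.

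For parts i) and ii) (finite horizon) I would argue as follows. First, fix $T$ and consider the reduced problem \eqref{Jred}-\eqref{xr}-\eqref{epred}. By Proposition~\ref{thcoercive}(i), for $T$ small the functional $J_0^T$ — equivalently, after \eqref{J0Tred}, the reduced functional $J_{red}^T$ together with the fixed nonintegral term $C_r^T(x_0,x_T)$ — is quadratically coercive on the affine subspace cut out by the constraints $\phi^i u_j(T) = V_{i,j}$; these constraints correspond, via \eqref{xred}, exactly to fixing the endpoint $x_u(T) = x_T$. On that affine subspace $J_0^T$ is a lower-semicontinuous (indeed continuous, by Proposition~\ref{cont}) quadratically coercive functional on the complete metric space $\mathcal{U}_{\overline\gamma_r[0,T]}$, so the direct method yields a minimizer; strict convexity coming from $R_r > 0$ on the reduced side plus the quadratic-coercivity gives uniqueness. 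That the minimizing trajectory lies in $H^n_{-(r-1)}[0,T]$ is immediate from Proposition~\ref{cont}. To pin down the threshold $T_0$ and obtain ii), I would invoke the standard LQ dichotomy: define $T_0$ as the supremum of horizons $T$ for which the reduced quadratic form is nonnegative on the relevant constrained subspace; for $T > T_0$ Assumption~\ref{nogauge} forces the existence of a one-parameter family (then, by time-rescaling/concatenation, an infinite-dimensional family) of controls on which $J_{red}^T < 0$, whence $\inf J_0^T = -\infty$ exactly as in the discussion preceding Assumption~\ref{finite}. The main obstacle here is making the coercivity-versus-horizon dichotomy precise — i.e. showing the infimum is $-\infty$ rather than merely failing to be attained once $T$ exceeds the critical value — but this is exactly the content already attributed to \cite{Guer} and may be quoted.

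For part iii) (infinite horizon) the argument is cleaner because the nonintegral terms vanish (see the Remark following \eqref{J0Tred}) and feedback stabilizability guarantees that $\inf J_0^\infty > -\infty$ with the infimum taken over a nonempty admissible set. By Proposition~\ref{thcoercive}(iii) the functional $J_0^\infty$ is quadratically coercive on the complete space $\mathcal{U}_{\gamma_r[0,+\infty[}$, and by Proposition~\ref{extinf} it is continuous there; feedback stabilizability ensures the admissible set (those $u$ whose reduced trajectory is square-integrable with respect to the running cost) is nonempty. Hence a minimizing sequence is bounded in $\mathcal{U}_{\gamma_r[0,+\infty[}$, and — after passing to a weakly convergent subsequence in the Hilbert-space structure underlying the $\gamma_r$-norms — lower semicontinuity of the convex functional yields a minimizer $\hat u$; strict convexity from $R_r > 0$ gives uniqueness, and $\hat x \in H_{-(r-1)}[0,+\infty[$ follows from Proposition~\ref{extinf}. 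The one point needing care is that the $\gamma_r$-completion is being treated as a Hilbert space so that weak compactness of bounded sets is available; since $\|\cdot\|_{\gamma_r}$ is literally an $L_2$-type norm pulled back along the linear map $\gamma_r$, this is legitimate, and I would simply remark that $\mathcal{U}_{\gamma_r[0,+\infty[} \cong L_2^k[0,+\infty[$ as Hilbert spaces (with the projective-limit topology over $T$ coinciding with the norm topology once $R_r>0$ makes the running cost equivalent to the $L_2$-norm of $\gamma_r u$ modulo the stabilizing feedback correction).
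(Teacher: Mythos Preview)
Your proposal is correct and aligns with the paper's approach: the paper does not give an independent proof of this proposition but simply states that it follows ``using Proposition~\ref{thcoercive}, classical existence theory and the relationship $v=\gamma_r u$'' and refers to \cite{Guer} for details. Your sketch is a faithful elaboration of exactly this route---reduce via $\gamma_r$ to the regular LQ problem with $R_r>0$, invoke coercivity from Proposition~\ref{thcoercive} to run the direct method, and read off uniqueness from strict convexity---so there is nothing to correct.
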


For the proofs of these results, see \cite{Guer}. We will now briefly
discuss the structure of generalized optimal solutions.

For regular linear-quadratic problems any optimal control
$v^*(\cdot)$ satisfies the Pontryagin maximum principle and is
analytic. %
From the desingularization procedure (Proposition~\ref{despr}) there
follows that the corresponding optimal generalized control for the
original singular LQ problem (\ref{ls})-(\ref{ep})-(\ref{j0tG})
satisfies the relationship $\gamma_r u ^* = v^*$, with $\gamma_r$
defined by (\ref{gr}). Hence it is a sum of an analytic function and
a distribution of order $\leq r$. This distribution is supported at
the points $t=0$ and $t=T$.

The corresponding generalized optimal trajectory is analytic on $]0,T[$ and
may happen to be discontinuous at points $t=0$ and $t=T$. The generalized
trajectory "jumps" at $t=0$ from $x(0)=x_0$ to the point
\begin{equation*}
x(0^+)=x_0+ \sum \limits _{0\leq i < j \leq r} B_{i,j}\phi^{i+1}u_j(0^+).
\end{equation*}
For $t \in ]0, T[$ it coincides with the analytic curve
\begin{equation*}
x(t)=x^r_{x_0,\gamma_r u}(t)+ \sum\limits_{0 \leq i<j\leq r}B_{i,j} \phi
^{i+1}u_j (t),
\end{equation*}
where $x^r_{x_0,\gamma_r u}$ is the trajectory of (\ref{eqgr}) driven by the
analytic control $\gamma_ru$. The generalized trajectory terminates with a
"jump" from the point
\begin{equation*}
x(T^-)=x^r_{\gamma_r u}(T)+ \sum\limits_{0\leq i < j \leq r}B_{i,j}\phi
^{i+1}u_j(T^-)
\end{equation*}
to the point
\begin{equation*}
x(T)=x^r_{\gamma_r u}(T)+ \sum\limits_{0\leq i < j \leq r}B_{i,j}\phi
^{i+1}u_j(T)=x_T
\end{equation*}
(the vectors $\phi^i u (0^+)$ and $\phi^i u (T^-)$ are well defined for all $%
i \geq 0$).

Assumption \ref{nogauge} guarantees that for any jump $
x(t_{0}^{+})-x(t_{0}^{-})$ belonging to $\mathop{\mbox{span}}
\nolimits\left\{ B_{i,j},\,0\leq i<j\leq r\right\}$ there exists a
unique distribution $\Delta \in \mathcal{U}_{\gamma _{r}[0,+\infty
\lbrack }$ supported at $\{t_{0}\}$, such that
\begin{equation*}
x(t_{0}^{+})-x(t_{0}^{-})=\sum\limits_{0\leq i<j\leq r}B_{i,j}\phi
^{i+1}\Delta _{j}.
\end{equation*}
The following result states an important property of the optimal
synthesis for problem (\ref{ls})-(\ref{j0tG})-(\ref{ep}) (see
\cite{Jurdj}, \cite{Jurdj-Kupka} and a result in \cite[Ch.6]{Or1},
which claims a minimizing control to be "generically" a sum of a
continuous control with impulses of different orders located at the
initial and the final point of the time interval).
\smallskip

\begin{proposition}
\label{thJurdj} Let Assumption~\ref{nogauge} hold and the infimum of
the problem (\ref{ls})-(\ref{ep})-(\ref{j0tG}) be finite. Consider
the subspace $ \mathcal{B}_r=\mathop{\mbox{span}}\nolimits\left\{
B_{i,j},\,0\leq i<j\leq r\right\} $.
\newline
For all
\begin{equation}
\tilde{x}_{0} \in x_{0}+\mathcal{B}_r, \qquad \tilde{x}_{T} \in
x_{T}+\mathcal{B} _r,   \label{bdxT}
\end{equation}%
the problem with boundary conditions $x(0)=\tilde{x}_{0}$, $x(T)=\tilde{x}
_{T}$ admits a generalized optimal solution (control and trajectory). For
each boundary data from the sets (\ref{bdxT}) there exists a generalized
optimal solution coinciding in the interval $]0,T[$ with the analytic arc of
the solution for the data $(x_0,x_T) . \ \square $
\end{proposition}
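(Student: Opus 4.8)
The plan is to exploit the desingularization already established in Proposition~\ref{despr} and the existence/uniqueness results of Proposition~\ref{CExUni}, reducing everything to the regular problem \eqref{Jred}--\eqref{xr}--\eqref{epred}. First I would observe that, by Proposition~\ref{despr}, the original trajectory decomposes as $x_{x_0,u}=x^r_{x_0,\gamma_r u}+\sum_{0\le i<j\le r}B_{i,j}\phi^{i+1}u_j$, so the only effect of prescribing boundary data from the affine sets $x_0+\mathcal B_r$, $x_T+\mathcal B_r$ is a shift inside $\mathcal B_r$; this shift can be absorbed by choosing appropriate boundary values $\phi^{i+1}u_j(0^+)$ and $\phi^{i+1}u_j(T^-)$ for the purely impulsive part, while leaving the regular component $v=\gamma_r u$ and the endpoint condition \eqref{epred} for $v$ unchanged. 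Concretely, given $\tilde x_0=x_0+b_0$ and $\tilde x_T=x_T+b_T$ with $b_0,b_T\in\mathcal B_r$, by Assumption~\ref{nogauge} there are unique impulsive distributions $\Delta^0,\Delta^T$ supported at $0$ and $T$ with $\sum B_{i,j}\phi^{i+1}\Delta^0_j=b_0$ and similarly for $b_T$; I would then show that adding these to any generalized solution of the $(x_0,x_T)$ problem produces an admissible generalized control for the $(\tilde x_0,\tilde x_T)$ problem.

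Next I would verify that this operation preserves optimality. The key point is that, by \eqref{J0Tred}, the functional $J_0^T(u)$ equals the integral functional $J_{red}^T(\gamma_r u)$ plus the term $C_r^T(x_0,x_T)$, which depends only on the prescribed boundary data and not on the control; moreover the correspondence $u\mapsto v=\gamma_r u$ sends the feasible set of the $(\tilde x_0,\tilde x_T)$ problem onto the feasible set of the \emph{same} reduced problem \eqref{Jred}--\eqref{xr}--\eqref{epred} (the endpoint condition \eqref{epred} for $v$ is insensitive to the $\mathcal B_r$-shift). Hence minimizing $J_0^T$ over controls with boundary data $(\tilde x_0,\tilde x_T)$ is equivalent to minimizing $J_{red}^T$ over the fixed feasible set, up to the additive constant $C_r^T(\tilde x_0,\tilde x_T)$. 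Since $R_r>0$, Proposition~\ref{CExUni}(i) (via Proposition~\ref{thcoercive}) gives, for $T<T_0$, existence and uniqueness of the optimal $v^*$, hence of the optimal generalized pair $(\hat u,\hat x)$ for each such boundary data. For the infinite-horizon case the same argument uses Proposition~\ref{CExUni}(iii).

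For the second assertion — that for every boundary data in \eqref{bdxT} one can choose an optimal solution coinciding on $]0,T[$ with the analytic arc associated to $(x_0,x_T)$ — I would use the structural description already given before the statement: the optimal generalized control is the sum of an analytic function (namely the regular optimal control $v^*$ of the reduced problem, which satisfies the Pontryagin maximum principle and is analytic on $]0,T[$) and distributions of order $\le r$ supported only at $t=0$ and $t=T$. Since $v^*$ is the same for all boundary data in \eqref{bdxT} (the reduced problem does not see the $\mathcal B_r$-shift), the analytic interior arc $x(t)=x^r_{x_0,\gamma_r u}(t)+\sum_{0\le i<j\le r}B_{i,j}\phi^{i+1}u_j(t)$ on $]0,T[$ is common to all of them; only the jumps at the endpoints $x(0^+)-x(0)$ and $x(T)-x(T^-)$ change, and these are precisely the impulsive corrections $\Delta^0,\Delta^T$ constructed above. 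I expect the main obstacle to be the careful bookkeeping of these endpoint jumps: one must check that the impulsive part can realize \emph{any} prescribed displacement in $\mathcal B_r$ at each endpoint while keeping the control in $\mathcal U_{\overline\gamma_r[0,T]}$ and not disturbing the interior arc — this is exactly what Assumption~\ref{nogauge} (through the uniqueness of $\Delta$ noted just before the statement) is designed to guarantee, so the argument should close, but the formal verification requires tracking the constants $\phi^i u(0^+)$, $\phi^i u(T^-)$ through the operator $\gamma_r$ in \eqref{gr}.
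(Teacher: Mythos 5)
The paper does not actually prove this proposition --- it is imported from the literature (\cite{Jurdj}, \cite{Jurdj-Kupka}, \cite[Ch.~6]{Or1}, \cite{Guer}) --- so your argument has to stand on its own. The existence half does: for each $(\tilde{x}_0,\tilde{x}_T)$ in the affine classes the desingularized problem is again a regular LQ problem with $R_r>0$ and the unchanged terminal constraint \eqref{epred}, and Proposition~\ref{CExUni} applies verbatim. The second half, however, rests on two claims that are both false.

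First, adding an impulsive distribution $\Delta$ supported at $\{0\}$ does \emph{not} leave the regular component $v=\gamma_r u$ unchanged: by \eqref{gr}, $\gamma_{r,j}\Delta$ contains $\phi^{j}\Delta_j$, and $\phi^{j}\delta^{(i)}$ is a nonzero piecewise-polynomial element of $L_2$ on $]0,T]$ whenever $i<j$, so the reduced control acquires a polynomial correction. Second, the reduced problem \eqref{Jred}--\eqref{xr}--\eqref{epred} \emph{does} see the $\mathcal{B}_r$-shift: the initial condition in \eqref{xr} is that of the original problem, so for $\tilde{x}_0=x_0+b_0$ the reduced trajectory is $x^r_{\tilde{x}_0,v}$ rather than $x^r_{x_0,v}$, the integrand of $J^T_{red}$ changes with it, and the shift is not absorbed into the additive constant $C_r^T$. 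Hence the optimal reduced control $v^*$ genuinely depends on the representative $\tilde{x}_0$. The simplest example already shows this: for $\dot{x}=u$, $J=\int_0^T x^2\,dt$ one has $r=1$, $v=\phi u$, $J^T_{red}(v)=\int_0^T(\tilde{x}_0+v)^2\,dt$, so $v^*\equiv-\tilde{x}_0$ on $]0,T[$ changes with $\tilde{x}_0$ while the interior arc $x=\tilde{x}_0+v^*\equiv 0$ does not. What is invariant is only the full trajectory $x^r+\sum B_{i,j}\phi^{i+1}u_j$ on $]0,T[$, and proving that invariance is precisely the content of the proposition: you must show that re-routing the boundary jumps changes the quadratic cost by an amount independent of the interior arc --- equivalently, that $\hat{v}+\gamma_r\Delta$ satisfies the first-order optimality conditions of the \emph{shifted} reduced problem. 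That cross-term computation (which is where \eqref{Z13} and the fact that the nonintegral terms in Proposition~\ref{despr} depend only on $(T,x_0,x_T)$ actually get used) is absent from your proposal; as written, the optimality of your modified control for the shifted data is asserted rather than proved.
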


Suppose Assumption \ref{nogauge} to hold. Let $m=rank\left(
B,~AB,...,A^{n-1}B\right) $, and let $p=\dim \mathop{\mbox{span}}%
\nolimits\left\{ B_{i,j},\ 0\leq i<j\leq r\right\} $. Fix $T\in \left]
0,+\infty \right[ $ and let $\mathcal{X}\subset \mathbb{R} ^{2n}$, denote
the set of pairs $\left( x_{0},x_{T}\right) $ for which the problem (\ref{ls}%
)-(\ref{j0tG})-(\ref{ep}) possesses classical optimal solution, $\left(
u,x_{u}\right) \in L_{2}^{k}[0,T]\times AC^{n}[0,T]$. Propositions \ref%
{CExUni} and \ref{thJurdj} imply that, if $T>0$ is sufficiently small, then $%
\mathcal{X}$ is a linear subspace of dimension $n+m-2p$. Further, existence
and uniqueness of generalized optimal solutions implies that any pair $%
\left( x_{0},x_{T}\right) \in \mathbb{R}^{2n}$, such that $x_{T}$ is
reachable from $x_{0}$, admits a unique decomposition
\begin{equation}
\left( x_{0},x_{T}\right) =
\begin{array}[t]{l}
\left( \tilde{x}_{0},\tilde{x}_{T}\right) + \sum\limits_{0\leq i<j\leq
r}\left(
\begin{array}{c}
B_{i,j} \\
0%
\end{array}%
\right) \alpha _{i,j}+ \sum\limits_{0\leq i<j\leq r}\left(
\begin{array}{c}
0 \\
B_{i,j}%
\end{array}%
\right) \beta _{i,j},%
\end{array}
\label{xb}
\end{equation}%
with $\left( \tilde{x}_{0},\tilde{x}_{T}\right) \in \mathcal{X}$, and $%
\alpha _{i,j},\ \beta _{i,j}$ being vectors of appropriate dimensions. The
discontinuities of the generalized optimal trajectories are computed as
\begin{eqnarray*}
&& x(0^{+})-x_{0} = \sum\limits_{0\leq i<j\leq r}B_{i,j}\alpha _{i,j},
\\
&& x_{T}-x(T^{-}) = \sum\limits_{0\leq i<j\leq r}B_{i,j}\beta _{i,j}.
\end{eqnarray*}

\subsection{Approximation of distributions\label{SS2e}}

The following Proposition gives the asymptotics of approximations of
some important distributions by square-integrable functions.

\begin{proposition}
\label{P7}Consider a $m^{th}$-order distribution of the form
\[
v=\delta ^{\left( m-1\right) }+\sum_{i=0}^{m-2}\alpha _{i}\delta
^{\left( i\right) },
\]
where $\alpha _{i}\in \mathbb{R}$, $i=0,1,...,m-2$, and $\delta
^{\left( i\right) }$ denotes the $i^{th}$ generalized derivative of
Dirac's "delta function".

For every integer $p\geq m$ there holds
\begin{equation*}
\begin{array}[t]{l}
\lim\limits_{\eta \rightarrow 0^{+}}\frac{\inf \left\{ \log \left\Vert
u\right\Vert _{L_{2}\left[ 0,T\right] }:\left\Vert u-v\right\Vert _{H_{-p}%
\left[ 0,T\right] }<\eta \right\} }{\log \frac{1}{\eta }}=\frac{2m-1}{%
2(p-m)+1}.\square%
\end{array}%
\end{equation*}
\end{proposition}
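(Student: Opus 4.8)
The plan is to estimate separately the lower and the upper bound for the quantity
\[
\Sigma(\eta) := \inf\left\{ \log \|u\|_{L_2[0,T]} : \|u-v\|_{H_{-p}[0,T]} < \eta \right\},
\]
and to show that in both directions the optimal trade-off is governed by rescaled bump functions. The natural object to use is a one-parameter family of approximants obtained by smoothing $v$ at scale $\tau \to 0^+$. Concretely, fix a smooth compactly supported profile $\psi$ and set $\psi_\tau(t) = \tau^{-1}\psi(t/\tau)$, so that $\psi_\tau \to \delta$ in the distributional sense; then approximate $\delta^{(i)}$ by $\psi_\tau^{(i)}$. For such an approximant $u_\tau = \psi_\tau^{(m-1)} + \sum_{i=0}^{m-2}\alpha_i \psi_\tau^{(i)}$ one computes, by the change of variables $t = \tau s$, that $\|\psi_\tau^{(i)}\|_{L_2} \asymp \tau^{-i-1/2}$; the top-order term $i=m-1$ dominates, so $\|u_\tau\|_{L_2} \asymp \tau^{-(m-1)-1/2} = \tau^{-(2m-1)/2}$. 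On the other hand $\|\psi_\tau^{(i)} - \delta^{(i)}\|_{H_{-p}} = \|\phi^p(\psi_\tau^{(i)} - \delta^{(i)})\|_{L_2}$; since $\phi^p$ integrates $p$ times, $\phi^p \delta^{(i)}$ is (up to sign and lower-order polynomial terms) $t_+^{p-i-1}/(p-i-1)!$, and $\phi^p \psi_\tau^{(i)}$ is a mollification of it at scale $\tau$, so the difference has $L_2$-norm of order $\tau^{p-i-1+1/2} = \tau^{p-i-1/2}$. The slowest-decaying term is again $i=m-1$, giving $\|u_\tau - v\|_{H_{-p}} \asymp \tau^{p-m+1/2} = \tau^{(2(p-m)+1)/2}$.

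Matching the constraint $\|u_\tau - v\|_{H_{-p}} < \eta$ forces $\tau \asymp \eta^{2/(2(p-m)+1)}$, and then
\[
\|u_\tau\|_{L_2} \asymp \tau^{-(2m-1)/2} \asymp \eta^{-\frac{2m-1}{2(p-m)+1}},
\]
whence $\log \|u_\tau\|_{L_2} / \log(1/\eta) \to (2m-1)/(2(p-m)+1)$, establishing the $\le$ half of the claimed limit. For the $\ge$ half I would argue that no approximant can do essentially better: given any $u$ with $\|u-v\|_{H_{-p}} < \eta$, write $w = \phi^p u$, which is within $\eta$ in $L_2$ of the explicit piecewise-polynomial distribution $\phi^p v$. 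The function $\phi^p v$ has a corner (a jump in its $(p-m+1)$-th derivative) at the support point of the deltas; any $L_2$-function $w$ that approximates it to within $\eta$ must therefore differ from a smooth function on an interval of length at least $\sim \eta^{2/(2(p-m)+1)}$ near that point, and on that interval $\|u\|_{L_2} = \|w^{(p)}\|_{L_2}$ (in the distributional sense, restricting attention to the relevant frequency band) is bounded below by the same power of $\eta$ as above. Making this lower bound rigorous is cleanest via a Fourier/Littlewood–Paley argument: the symbol of $v$ grows like $|\xi|^{m-1}$, the $H_{-p}$ norm weights frequency $\xi$ by $|\xi|^{-p}$, and balancing $\int_{|\xi|>R}|\hat v|^2|\xi|^{-2p} \lesssim \eta^2$ against $\|u\|_{L_2}^2 \gtrsim \int_{|\xi|<R}|\hat v|^2 \asymp R^{2m-1}$ with $R \asymp \eta^{-2/(2(p-m)+1)}$ reproduces the exponent.

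The main obstacle is the lower bound, i.e. showing the rescaled-bump construction is genuinely optimal. The subtlety is that the domain is the finite interval $[0,T]$ rather than the whole line, so I cannot literally pass to the Fourier transform; I expect to handle this by localizing near the support point of the distribution (away from that point $v$ vanishes and contributes nothing essential), extending to $\mathbb{R}$ by a cutoff, and controlling the commutator between the cutoff and the operators $\phi^p$, $d^p/dt^p$ — routine but the place where care is needed. A secondary point is that the lower-order terms $\alpha_i \delta^{(i)}$ with $i < m-1$ must be shown not to help: since they are of strictly lower order, they cannot lower the leading exponent, but one should note that they also cannot be exploited to cancel part of the $H_{-p}$-cost of the top term, which again follows from the frequency-band separation in the Littlewood–Paley estimate. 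Once these localization issues are dispatched, both inequalities reduce to the single scaling computation $\tau \mapsto (\tau^{-(2m-1)/2}, \tau^{(2(p-m)+1)/2})$ carried out above.
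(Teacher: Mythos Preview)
Your upper-bound construction is essentially the paper's: the paper uses an explicit piecewise polynomial (a polynomial of degree $2p-1$ on $[0,\eta]$ matched to $t^{p-m}/(p-m)!$ on $[\eta,T]$ so that the junction is $C^{p-1}$), you use a mollifier; both give $\|u_\tau\|_{L_2}\asymp\tau^{-(2m-1)/2}$ and $\|u_\tau-v\|_{H_{-p}}\asymp\tau^{(2(p-m)+1)/2}$, and the trade-off is identical.

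The lower bound is where you diverge, and where I would push back. The paper does \emph{not} use Fourier at all: it argues entirely in real variables. From $\|\phi^p u - t^{p-m}/(p-m)!\|_{L_2}<\eta$ one uses Chebyshev's inequality to find many points $\theta_{0,i}$ in a short initial interval where $\phi^p u(\theta_{0,i})$ is pointwise close to $\theta_{0,i}^{p-m}/(p-m)!$; then the mean-value theorem, applied to consecutive pairs, produces points where $\phi^{p-1}u$ is close to $t^{p-m-1}/(p-m-1)!$; iterating $p-m$ times gives a point $\theta_{p-m}$ near the origin with $\phi^m u(\theta_{p-m})\approx 1$, and $m-1$ further mean-value steps give $\phi u(\theta_{p-1})\gtrsim\theta_{p-1}^{-(m-1)}$. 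Cauchy--Schwarz on $[0,\theta_{p-1}]$ then gives $\|u\|_{L_2}\gtrsim\theta_{p-1}^{-(m-1/2)}$, and optimizing the free parameters yields the exponent. This is completely elementary and, crucially, is native to the interval $[0,T]$ with the Volterra-type norm $\|\cdot\|_{H_{-p}}=\|\phi^p\cdot\|_{L_2}$.

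Your Fourier/Littlewood--Paley route gets the scaling right on $\mathbb R$, but the obstacles you flag are real and not ``routine'': the operator $\phi$ here is integration from $0$, a Volterra operator, so neither the Fourier transform on $\mathbb R$ nor Fourier series on $[0,T]$ diagonalizes it, and the paper's $H_{-p}$ norm does not coincide with any standard Sobolev weight. Extending by a cutoff and controlling commutators with $\phi^p$ is awkward precisely because $\phi^p$ is not a pseudodifferential operator; you would likely end up re-deriving the pointwise control that the paper's mean-value argument gives directly. I would recommend replacing the Fourier sketch with the Chebyshev/mean-value/Schwarz chain; it is shorter, needs no localization, and handles the boundary at $t=0$ (where the ``corner'' of $\phi^p v$ sits) automatically.
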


The rather technical proof of Proposition~\ref{P7} can be found in
Subsection~\ref{SS4a}.

\subsection{Degree of singularity of LQ problems\label{SS2f}}

For the finite horizon case ($T<+\infty $) the following result holds

\begin{theorem}
\label{c muT} Consider the problem (\ref{ls})-(\ref{ep})-(\ref{j0tG}).
Suppose Assumption \ref{nogauge} holds and let $\left( x_{0},x_{T}\right) $,
$\left( \tilde{x}_{0},\tilde{x}_{T}\right) $, $\alpha _{i,j}$, $\beta _{i,j}$
be as in (\ref{xb}).

\begin{enumerate}
\item If $\alpha_{i,j}=0, \ \beta_{i,j}=0, \ 0 \leq i < j \leq r$, and the
optimal control is not identically zero, then $\sigma_T =0$; if the
optimal control is zero then $ \sigma_T =-\infty$; \smallskip

\item If either $\alpha _{i,j}\neq 0$ or $\beta _{i,j}\neq 0$ for some $%
(i,j) $, then
\begin{equation}  \label{mulist}
\sigma _{T}=\max \limits_{%
\begin{array}{c}
0\leq i<j\leq r, \\
(\alpha _{i,j}\neq 0\ \mathrm{or}\ \beta _{i,j}\neq 0)%
\end{array}%
} \frac{i+1/2}{2(j-i)-1} .\ \square
\end{equation}
\end{enumerate}
\end{theorem}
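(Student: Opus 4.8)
The plan is to reduce the computation of $\sigma_T$ to the approximation estimates of Proposition~\ref{P7} by exploiting the desingularization machinery. First I would use the decomposition \eqref{xb} together with Proposition~\ref{thJurdj}: for the given boundary data $(x_0,x_T)$ write $(x_0,x_T)=(\tilde x_0,\tilde x_T)+\sum B_{i,j}\alpha_{i,j}$ (at $t=0$) $+\sum B_{i,j}\beta_{i,j}$ (at $t=T$), where $(\tilde x_0,\tilde x_T)\in\mathcal X$ admits a \emph{classical} optimal control $\bar u$. By \eqref{J0Tred} the functional differs from $J^T_{red}(\gamma_r u)$ only by a constant depending on boundary data, and $J^T_{red}$ is quadratically coercive on the relevant affine subspace (Proposition~\ref{thcoercive}). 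Consequently, a control $u$ is $\varepsilon$-optimal with $|x_u(T)-x_T|<\varepsilon$ if and only if $\gamma_r u$ is close (in $L_2$, up to $O(\sqrt\varepsilon)$) to $v^*=\gamma_r\hat u$, the analytic optimal control of the reduced problem, \emph{and} the boundary moments $\phi^i u_j(0^+),\ \phi^i u_j(T^-)$ are within $O(\varepsilon)$ of the prescribed values $\alpha_{i,j},\beta_{i,j}$ (since the jumps $x(0^+)-x_0$ and $x_T-x(T^-)$ are controlled by these moments via the injectivity coming from Assumption~\ref{nogauge}).

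The second step is to see that minimizing $\|u\|_{L_2[0,T]}$ subject to these constraints decouples, asymptotically, into independent one-point approximation problems. The analytic part $v^*$ contributes only an $O(1)$ amount to $\|u\|_{L_2}$, so it is negligible on the logarithmic scale. What forces $\|u\|_{L_2}\to\infty$ is the need to realize, for each $(i,j)$ with $\alpha_{i,j}\neq0$ or $\beta_{i,j}\neq0$, a component $u_j$ whose $(i+1)$-st primitive at the endpoint approximates a nonzero constant while $\gamma_r u$ stays $L_2$-bounded — equivalently, $u_j$ must approximate (in a weak norm) an impulse of order $i+1$ localized at $t=0$ or $t=T$. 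Here the structure of $\gamma_r$ in \eqref{gr} is crucial: $\gamma_{r,j}u$ involves $\phi^j u_j$ (plus lower-order primitives of higher-index components), so requiring $\gamma_r u\in L_2$ with bounded norm means $u_j$ is controlled exactly in the $H_{-j}$-type norm, while the constraint on $\phi^{i+1}u_j$ at the endpoint is a moment of order $i+1$. Applying Proposition~\ref{P7} with $m=i+1$ (the distribution $\delta^{(i)}$, i.e. $\delta^{(m-1)}$) and $p=j$ gives exactly the exponent $\dfrac{2m-1}{2(p-m)+1}=\dfrac{2i+1}{2(j-i)-1}=\dfrac{i+1/2}{2(j-i)-1}$ for the minimal growth rate of $\|u\|_{L_2}$ needed to achieve tolerance $\varepsilon$ in that single coordinate. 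Taking the worst (largest) such exponent over all active $(i,j)$ yields \eqref{mulist}; in item (1), when all $\alpha_{i,j},\beta_{i,j}$ vanish there are no impulses to approximate, the optimal $\hat u$ is already an honest $L_2$-control, so $\|u\|_{L_2}$ stays bounded and the numerator $\ln\|u\|_{L_2}$ is $O(1)$, forcing $\sigma_T=0$ (or $-\infty$ if $\hat u\equiv0$, as then the infimum of $\ln\|u\|_{L_2}$ tends to $-\infty$).

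The main obstacle, and the step requiring genuine care rather than bookkeeping, is establishing that the multi-moment constrained minimization truly decouples so that the $\limsup$ equals the maximum of the individual exponents, with no cross-interference either improving or worsening the rate. For the lower bound on $\sigma_T$ one picks the single index $(i_*,j_*)$ attaining the maximum, restricts attention to that scalar moment, and invokes the lower-bound half of Proposition~\ref{P7} — but one must check that the other constraints (coercivity in $L_2$ of $\gamma_r u$, the remaining endpoint moments, the endpoint condition $|x_u(T)-x_T|<\varepsilon$) cannot be exploited to cheat, which follows because they are only \emph{additional} constraints and the relevant projection is bounded. For the upper bound one superimposes near-optimal approximants for each active coordinate; since they can be chosen supported near $t=0$ and $t=T$ with controlled tails, and the worst one dominates on the log scale, $\|u\|_{L_2}\sim\varepsilon^{-\sigma_T}$ up to subpolynomial factors. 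Verifying that the error terms coming from the interaction between the analytic part, the finitely many boundary moments, and the nonlinear-in-boundary-data constant $C^T_r(x_0,x_T)$ all remain within the claimed tolerance — uniformly as $\varepsilon\to0$ — is where the quadratic-coercivity estimates of Proposition~\ref{thcoercive} must be applied with some precision.
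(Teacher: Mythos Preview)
Your proposal is essentially correct and follows the same route as the paper: reduce via the desingularization and coercivity (Propositions~\ref{despr}, \ref{thcoercive}) to the question of approximating, in the $\gamma_r$-norm, the generalized optimal control $\hat u$ whose distributional components are $\delta^{(i)}$'s at $t=0,T$, and then invoke Proposition~\ref{P7} coordinatewise to obtain both the upper and the lower bound on $\sigma_T$.

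Two points worth tightening. First, the displayed chain $\dfrac{2m-1}{2(p-m)+1}=\dfrac{2i+1}{2(j-i)-1}=\dfrac{i+1/2}{2(j-i)-1}$ is literally false; the last equality loses a factor of $2$. What actually happens (and what the paper does, and what your own ``$O(\sqrt\varepsilon)$'' remark anticipates) is that Proposition~\ref{P7} gives the growth of $\|u\|_{L_2}$ in terms of the $\gamma_r$-tolerance $\eta$, while coercivity gives $J_0^T(u)-\inf J_0^T\asymp \eta^2$, so $\eta\sim\varepsilon^{1/2}$ and the exponent halves to $\dfrac{i+1/2}{2(j-i)-1}$. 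Second, your ``superimpose near-optimal approximants'' for the upper bound glosses over a genuine coupling: by \eqref{gr}, $\gamma_{r,j}u$ contains cross-terms $\phi^{i+1}u_l$ with $l>j$, so the approximants cannot be chosen independently. The paper handles this by a descending induction on $j$ (first pick $u_{r,\eta}$ via Proposition~\ref{P7}, then at step $j$ absorb the already-constructed $u_{l,\eta}$, $l>j$, into the definition of $u_{j,\eta}$); this triangular structure is exactly why the decoupling you assert does hold, but it should be made explicit.
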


Theorem \ref{c muT}, together with the decomposition (\ref{xb}) results in
the following description of the geometry of singularity of LQ problems.

\begin{theorem}
\label{strat} The space $\mathbb{R} ^{n}\times \mathbb{R}^{n}$ of boundary
data admits a stratification into linear subspaces. The directing linear
subspaces of strata are spanned by the columns of the matrices
\begin{equation*}
\left(
\begin{array}{cc}
B_{i,j} & 0 \\
0 & B_{i,j}%
\end{array}%
\right) , \qquad 0\leq i<j\leq r ,
\end{equation*}
with $B_{i,j}$ defined in Subsection~\ref{SS2b}. Different strata
correspond to different distributional components of optimal
generalized controls. For generic boundary data (the largest
stratum) optimal generalized controls contain distributional
components of the form $\delta ^{(r-1)}$; the degree of singularity
equals $r-1/2$. When passing to the strata of smaller dimensions the
order of singularity $\sigma_T $ decreases, admitting values from
the list
\begin{equation}
\left\{ \frac{i+1/2}{2(j-i)-1}:\ 0\leq i<j\leq r,\right\} . \ \Box
\nonumber 
\end{equation}
\end{theorem}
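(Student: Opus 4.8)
\textbf{Proof proposal for Theorem~\ref{strat}.}

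The plan is to combine the decomposition \eqref{xb} with Theorem~\ref{c muT} and the structure of generalized optimal solutions described in Subsection~\ref{SS2d}. First I would set up the stratification: given a pair of boundary data $(x_0,x_T)\in\mathbb R^{2n}$ with $x_T$ reachable from $x_0$, decomposition \eqref{xb} writes it uniquely as an element $(\tilde x_0,\tilde x_T)$ of the subspace $\mathcal X$ plus a combination of the vectors $\bigl(\begin{smallmatrix}B_{i,j}\\0\end{smallmatrix}\bigr)$ and $\bigl(\begin{smallmatrix}0\\B_{i,j}\end{smallmatrix}\bigr)$ with coefficient vectors $\alpha_{i,j},\beta_{i,j}$. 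I would declare two boundary data to lie in the same stratum when the set of indices $(i,j)$ for which $\alpha_{i,j}\neq0$ or $\beta_{i,j}\neq0$ is the same. Each such stratum is (an open dense subset of) the affine-then-translated linear subspace $\mathcal X \oplus \mathop{\mbox{span}}\{(\begin{smallmatrix}B_{i,j}&0\\0&B_{i,j}\end{smallmatrix})\text{-columns}: (i,j)\in S\}$ for the prescribed index set $S$; the directing linear subspaces are exactly those spanned by the columns of the displayed block matrices, as claimed. One must check these pieces genuinely partition the reachable set and that the closure of each stratum is a union of lower-dimensional strata — this is a routine consequence of the uniqueness of the decomposition \eqref{xb} and the linearity of $\mathcal X$.

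Next I would identify the distributional content of the optimal generalized control on each stratum. By the discussion following Proposition~\ref{thJurdj}, the jumps of the optimal generalized trajectory at $t=0$ and $t=T$ are $x(0^+)-x_0=\sum_{0\le i<j\le r}B_{i,j}\alpha_{i,j}$ and $x_T-x(T^-)=\sum_{0\le i<j\le r}B_{i,j}\beta_{i,j}$, and Assumption~\ref{nogauge} guarantees that each such jump is produced by a \emph{unique} distribution $\Delta\in\mathcal U_{\gamma_r[0,+\infty[}$ supported at the jump point with $\sum B_{i,j}\phi^{i+1}\Delta_j$ equal to the jump. Hence the index $(i,j)$ with $\alpha_{i,j}\neq0$ (resp. $\beta_{i,j}\neq0$) contributes a distributional component of order $j$ (a term behaving like $\delta^{(j-1-i)}$ after the $\phi^{i+1}$; more precisely $\gamma_ru^*$ has a component $\phi^{i+1}u_j$ with $u_j$ containing $\delta^{(\cdot)}$) located at the respective endpoint. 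This is precisely the book-keeping that makes $\alpha_{i,j},\beta_{i,j}$ the ``distributional components'' referred to in the statement, so ``different strata correspond to different distributional components'' is just a restatement of the uniqueness in \eqref{xb}.

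Then the degree-of-singularity claims follow directly from Theorem~\ref{c muT}. For the generic (largest) stratum, $\mathcal X$ has dimension $n+m-2p$ and the reachable set has dimension $n+m$ (or the appropriate count), so generically \emph{all} the coefficient slots are ``on,'' in particular the ones with $j=r$, $i=0$; the corresponding distributional component is of the form $\delta^{(r-1)}$, and formula \eqref{mulist} gives $\sigma_T=\frac{0+1/2}{2(r-0)-1}\cdot$ — wait, I must be careful: the maximum in \eqref{mulist} over all admissible $(i,j)$ is attained, and one checks by elementary calculus on the function $(i,j)\mapsto\frac{i+1/2}{2(j-i)-1}$ that its largest value over $0\le i<j\le r$ is $\frac{(r-1)+1/2}{2(r-(r-1))-1}=r-1/2$, attained at $(i,j)=(r-1,r)$. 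So generically $\sigma_T=r-1/2$. On a smaller stratum the index set $S$ shrinks, the maximum in \eqref{mulist} is taken over a subset of pairs, hence can only decrease, and every value it takes is of the form $\frac{i+1/2}{2(j-i)-1}$ with $0\le i<j\le r$; this yields the asserted finite list. I would finish by noting that each value in the list is actually realized by choosing boundary data that activates only the single slot $(i,j)$, so the list is sharp. The main obstacle is purely expository: making the correspondence between the combinatorial index data $S$, the geometry of the strata, and the order of the supported distributions fully precise — once that dictionary is fixed, everything is a direct appeal to Theorem~\ref{c muT} and the monotonicity of \eqref{mulist} under shrinking the index set, plus the elementary maximization identifying $r-1/2$.
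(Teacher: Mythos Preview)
Your proposal is correct and matches the paper's approach: the paper does not give a separate proof of Theorem~\ref{strat} but presents it explicitly as a consequence of Theorem~\ref{c muT} together with the decomposition~\eqref{xb}, which is exactly the route you take. Your elaboration of the dictionary between the index set $S$, the strata, and the distributional components, and your elementary maximization identifying $(i,j)=(r-1,r)$ as the pair achieving $\sigma_T=r-1/2$, simply make explicit what the paper leaves to the reader.
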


The infinite horizon case ($T=+\infty $) is analogous to the finite horizon
case; one just has to deal with stratification of the space of initial data.

Let $\mathcal{X}_{0}\subset \mathbb{R} ^{n}$, denote the set of initial
points $x_{0}$ for which problem (\ref{ls})-(\ref{j0tG}) has a classical
optimal solution, $\left( u,x_{u}\right) \in L_{2}^{k}[0,+\infty \lbrack
\times AC^{n}[0,+\infty \lbrack $. If the quadratic form $(x,u) \mapsto
x^{\prime}Px+2u^{\prime}Qx+u^{\prime}Ru$ is nonnegative and system (\ref{ls}%
) is feedback stabilizable, then Theorem \ref{thJurdj} guarantees that $%
\mathcal{X}_{0}$ is a linear subspace of dimension $n-p$. Further, existence
and uniqueness of generalized optimal solutions implies that any initial
point $x_{0}\in \mathbb{R}^{n}$ admits a unique decomposition
\begin{equation*}
x_{0}=\tilde{x}_{0}+\sum\limits_{0\leq i<j\leq r}B_{i,j}\alpha _{i,j},
\end{equation*}%
with $\tilde{x}_{0}\in \mathcal{X}_{0}$, and $\alpha _{i,j}$ being vectors
of appropriate dimensions. The discontinuities of the generalized optimal
trajectories are
\begin{equation*}
x(0^{+})-x_{0}=\sum\limits_{0\leq i<j\leq r}B_{i,j}\alpha _{i,j}.
\end{equation*}%
Thus we have for the infinite horizon case the following analogous of
Theorem \ref{c muT}: \smallskip

\begin{theorem}
\label{c muInf} Consider the problem (\ref{ls})-(\ref{j0tG}), with $
T=+\infty $. Let Assumption \ref{nogauge} hold, the quadratic form
$(x,u) \mapsto x^{\prime}Px+2u^{\prime}Qx+u^{\prime}Ru$ be
nonnegative and system (\ref{ls}) be stabilizable by linear
feedback. Then

\begin{enumerate}
\item If $\alpha_{i,j}=0, \ 0 \leq i < j \leq r$, and the optimal control is
nonzero, then $\sigma_\infty =0$; $\sigma_\infty =-\infty$, if the
optimal control is zero; \smallskip

\item If $\alpha _{i,j}\neq 0$ for some $(i,j)$, then
\begin{equation*}
\sigma_\infty =\max\limits_{%
\begin{array}{c}
0\leq i<j\leq r \\
\alpha _{i,j}\neq 0%
\end{array}
} \frac{i+1/2}{2(j-i)-1} . \ \square
\end{equation*}
\end{enumerate}
\end{theorem}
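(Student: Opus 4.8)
The plan is to reduce the infinite-horizon statement to the already-established finite-horizon machinery and to Proposition~\ref{P7}, exactly as is done for Theorem~\ref{c muT}. First I would recall from Proposition~\ref{CExUni}(iii) that under the stated hypotheses the problem (\ref{ls})-(\ref{j0tG}) with $T=+\infty$ admits a unique generalized optimal solution $(\hat u,\hat x)\in\mathcal{U}_{\gamma_r[0,+\infty[}\times H^n_{-(r-1)}[0,+\infty[$, and from Proposition~\ref{thcoercive}(iii) that $J_0^\infty$ is quadratically coercive on $\mathcal{U}_{\gamma_r[0,+\infty[}$, i.e.\ in the norm $\|u\|_{\gamma_r}=\|\gamma_r u\|_{L_2}$. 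Then I would use the decomposition $x_0=\tilde x_0+\sum_{0\le i<j\le r}B_{i,j}\alpha_{i,j}$ with $\tilde x_0\in\mathcal{X}_0$: the generalized optimal control is the analytic control associated to $\tilde x_0$ plus a distribution $\Delta$ supported at $\{0\}$ producing the jump $\sum B_{i,j}\alpha_{i,j}$, and by the structure of $\gamma_r$ (formula (\ref{gr})) this distribution is, componentwise, a linear combination of $\delta^{(i)}$'s of orders determined by the indices $(i,j)$ with $\alpha_{i,j}\ne 0$.

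Next, for part (1), if all $\alpha_{i,j}=0$ then $\hat u\in L_2^k$ is genuinely square-integrable; quadratic coercivity then gives, for $J_0^\infty(u)\le\inf J_0^\infty+\varepsilon$, a bound $\|u-\hat u\|_{\gamma_r}^2\le C\varepsilon$, and since on a compact time interval this translates (using continuity of the extended trajectory map and the definition of $\sigma_\infty$) into $\|u\|_{L_2}$ staying bounded, the numerator $\ln\|u\|_{L_2}$ stays bounded, so $\sigma_\infty=0$ when $\hat u\not\equiv 0$ and $\sigma_\infty=-\infty$ when $\hat u\equiv 0$ (the infimum is then attained at $u=0$, $\ln\|u\|_{L_2}=-\infty$). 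For part (2), the estimate from coercivity forces any $\varepsilon$-minimizer $u$ to satisfy $\|\gamma_r(u-\hat u)\|_{L_2}^2\le C\varepsilon$, i.e.\ $u$ must approximate, in the appropriate weak norm, the distributional part of $\hat u$; conversely, concatenating an approximating control on a fixed finite interval $[0,S]$ with the stabilizing feedback tail (as in the proof of Theorem~\ref{lav}) shows $\varepsilon$-minimizers of this accuracy do exist. The cost of this approximation in $L_2$-norm is governed, component by component, by Proposition~\ref{P7}: approximating $\delta^{(i)}+\sum_{l<i}(\cdots)\delta^{(l)}$ (here $m=i+1$) within $H_{-p}$-accuracy $\eta\sim\sqrt\varepsilon$ where the relevant smoothing order is $p=j$ (because $\gamma_{r,\cdot}$ applies $\phi^{j+1}$ to the $u_l$ with $l>i$, equivalently the $H_{-(j)}$-type norm records the $(i,j)$ block — I would verify the precise index bookkeeping against (\ref{gr})) costs $\|u\|_{L_2}\sim\eta^{-\frac{2m-1}{2(p-m)+1}}=\varepsilon^{-\frac{1}{2}\cdot\frac{2i+1}{2(j-i)-1}}$, whence $\ln\|u\|_{L_2}/\ln\frac1\varepsilon\to\frac{i+1/2}{2(j-i)-1}$; taking the maximum over all $(i,j)$ with $\alpha_{i,j}\ne 0$ gives the claimed formula.

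The main obstacle I expect is the precise matching of the abstract weak-norm estimate coming from quadratic coercivity in $\|\cdot\|_{\gamma_r}$ with the scalar distribution-approximation result of Proposition~\ref{P7}: one must check that the block-triangular operator $\gamma_r$ of (\ref{gr}) really does decouple the contributions so that the $(i,j)$-block of a control $u$ with $\phi^{j+1}u_j$ approximating the corresponding primitive of $\delta^{(i)}$-type data is controlled exactly by an $H_{-(j-i-1)}$- (or suitably shifted) norm, and that the reachability constraint / feedback-tail construction does not inflate the $L_2$-norm beyond the order $\varepsilon^{-\frac{i+1/2}{2(j-i)-1}}$. The remaining ingredients — existence of $\varepsilon$-minimizers with the right weak-norm accuracy (coercivity plus Theorem~\ref{lav}-style concatenation) and the lower bound ($\liminf$) for $\sigma_\infty$ forcing $\|u\|_{L_2}$ to grow at least at the Proposition~\ref{P7} rate — are then routine adaptations of the finite-horizon argument behind Theorem~\ref{c muT}, the only genuine difference being the replacement of the endpoint-jump analysis at $t=T$ by the stabilizable tail at $t=+\infty$, which contributes nothing to the degree of singularity.
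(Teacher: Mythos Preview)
Your proposal is correct and aligns with the paper's own treatment: the paper does not give a separate proof of Theorem~\ref{c muInf} but presents it as the direct analogue of Theorem~\ref{c muT}, with the understanding that the argument carries over once the endpoint jump at $t=T$ is suppressed and only the $t=0$ contribution survives. Your use of Proposition~\ref{thcoercive}(iii) to convert $\varepsilon$-minimality into $\|\cdot\|_{\gamma_r}$-closeness (hence $\eta\sim\sqrt{\varepsilon}$), together with Proposition~\ref{P7} component-by-component and the observation that the stabilizable/analytic tail contributes nothing to the growth rate, is exactly the intended adaptation; your index computation $m=i+1$, $p=j$ giving $\tfrac{2m-1}{2(p-m)+1}=\tfrac{2i+1}{2(j-i)-1}$ and then halving via $\eta^2\sim\varepsilon$ is correct.
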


\begin{remark}
Since the system (\ref{ls}) is linear, it follows that it is
stabilizable if and only if it is stabilizable by a linear feedback.
Therefore it is stabilizable if and only if it is stabilizable of
order $1$ in the sense of Definition \ref{detc}.
\end{remark}

\noindent \textbf{Proof of Theorem~\ref{c muT}.} Notice that $\alpha
_{i,j}=0,\ \beta _{i,j}=0$ for all $(i,j)$ such that $0\leq i<j\leq
r$ if and only if the optimal control is an analytic function in
$[0,T]$. Thus, assertion (1) follows immediately.

Otherwise the optimal control is the sum of an analytic function
with a distribution concentrated at points $0, T$. We will first
prove that $\sigma _{T}$ can not exceed the value (\ref{mulist}).

For each $j\in \left\{ 1,...,r\right\} $ let $p_{j}=\max \left\{ i:\alpha
_{i,j}\neq 0\text{ or }\beta _{i,j}\neq 0\right\} $. Let $\hat{u}=\left(
\hat{u}_{0},\hat{u}_{1},...,\hat{u}_{r}\right) $ denote the generalized
optimal control. Since (\ref{xb}) holds, then by equality (\ref{xred}) $%
u_{j} $ contains a distributional component of order $p_{j}+1$, supported at
$\left\{ 0,T\right\} $. Proposition \ref{P7} guarantees the existence of $%
\left\{ u_{r,\eta },~\eta >0\right\} $ such that
\begin{equation*}
\left\Vert \phi ^{r}\left( u_{r,\eta }-\hat{u}_{r}\right) \right\Vert _{L_{2}%
\left[ 0,T\right] }=O\left( \eta \right) , \ \left\Vert u_{r,\eta
}\right\Vert _{L_{2}\left[ 0,T\right] }=O\left( \eta^{-\frac{2p_{r}+1}{%
2\left( r-p_{r}\right) -1}}\right) .
\end{equation*}
Suppose that for some $j\geq 1$ we can chose $\left\{ \left( u_{j+1,\eta
},u_{j+2,\eta },u_{r,\eta }\right) ,~\eta >0\right\} $ such that
\begin{eqnarray*}
\sum_{q=j+1}^{r}\left\Vert \gamma _{r,q}\left( u_{\eta }-\hat{u}\right)
\right\Vert _{L_{2}\left[ 0,T\right] }=O\left( \eta \right) , \\
\sum_{q=j+1}^{r}\left\Vert u_{q,\eta }\right\Vert _{L_{2}\left[ 0,T\right]
}= O\left( \eta^{-\max_{q>j}\frac{2p_{q}+1}{2\left( q-p_{q}\right) -1}%
}\right).
\end{eqnarray*}

Since
\begin{equation*}
\gamma _{r,j}\hat{u}=\phi ^{j} \hat{u}_{j}+ R_{j,j}^{-1}\sum_{j\leq i<l\leq
r}\left( Q_{j,j}B_{i,l}-B_{j,j}^{\prime }Q_{i,l}^{\prime }\right) \phi ^{i+1}%
\hat{u}_{l}
\end{equation*}
are square-integrable and all distributional components of $\hat{u}$ are
supported at $\left\{ 0,T\right\} $, then for some constants $V_{i}^{0}$, $%
V_{i}^{T}$, $i=1,2,...,p_{j}$, and some square-integrable function, $w$
there holds
\begin{eqnarray*}
\hspace{0.3cm} \hat{u}_{j}\left( t\right) +
R_{j,j}^{-1}\sum\limits_{j\leq i<l\leq r}\left(
Q_{j,j}B_{i,l}-B_{j,j}^{\prime }Q_{i,l}^{\prime }\right)
\phi ^{i-j+1}\hat{u}_{l}\left( t\right) = \smallskip \\
= w\left( t\right) +\sum\limits_{i=1}^{p_{j}}\left( V_{i}^{0}\delta ^{\left(
i-1\right) }\left( t\right) +V_{i}^{T}\delta ^{\left( i-1\right) }\left(
t-T\right) \right) .
\end{eqnarray*}
Proposition \ref{P7} guarantees that one can chose square integrable
functions $\Delta _{i,\eta }$ such that
\begin{eqnarray*}
&& \left\Vert \delta ^{i-1}-\Delta _{i,\eta }\right\Vert _{H_{-j}\left[ 0,T%
\right] }=O\left( \eta \right) , \\
&& \left\Vert \Delta _{i,\eta }\right\Vert _{L_{2}\left[ 0,T\right]
}=O\left( \eta^{-\frac{2i-1}{ 2\left( j-1\right) +1}} \right) ,\quad
i=1,2,...,p_{j}.
\end{eqnarray*}
Let
\begin{align*}
u_{j,\eta }=w+ & \sum_{i=1}^{p_{j}}\left( V_{i}^{0}\Delta _{i,\eta
}\left(
t\right) +V_{i}^{T}\Delta _{i,\eta }\left( t-T\right) \right)- \\
- & R_{j,j}^{-1}\sum_{j\leq i<l\leq r}\left(
Q_{j,j}B_{i,l}-B_{j,j}^{\prime }Q_{i,l}^{\prime }\right) \phi
^{i-j+1}u_{l,\eta }.
\end{align*}

This choice of $u_{j,\eta }$ guarantees $\left\Vert \gamma _{r,j}\left(
u_{\eta }-\hat{u}\right) \right\Vert _{L_{2}\left[ 0,T\right] }=O\left( \eta
\right) $ and
\begin{eqnarray*}
\left\Vert v_j - \left( w+\sum_{i=1}^{p_{j}}\left( V_{i}^{0}\Delta _{i,\eta
}\left( t\right) +V_{i}^{T}\Delta _{i,\eta }\left( t-T\right) \right)
\right)\right\Vert= \\
\hspace{1.5cm} = O\left( \sum\limits_{i=j+1}^{r}\left\Vert v_{i,\eta
}\right\Vert _{H_{-1}}\right) = O\left( \eta ^{-\frac{2p_{j}-1}{2(j-p_{j})+1}%
}+\sum\limits_{i=j+1}^{r}\left\Vert v_{i,\eta }\right\Vert _{H_{-1}}\right) .
\end{eqnarray*}%
This proves existence of a family of square-integrable controls, $\left\{
u_{\eta },~\eta >0\right\} $ such that
\begin{equation*}
\left\Vert u_{\eta }-\hat{u} \right\Vert _{\gamma _{r}}=O\left( \eta \right)
, \ \left\Vert u_{\eta }\right\Vert _{L_{2}\left[ 0,T\right] }= O\left( \eta
^{-\max_{1\leq q\leq r}\frac{2p_{q}+1}{2\left( q-p_{q}\right) -1}}\right)
\end{equation*}
and therefore,
\begin{equation*}
\sigma _{T} \leq \max \limits_{%
\begin{array}{c}
0\leq i<j\leq r, \\
(\alpha _{i,j}\neq 0\ \mathrm{or}\ \beta _{i,j}\neq 0)%
\end{array}%
} \frac{i+1/2}{2(j-i)-1} .
\end{equation*}

In order to prove the inverse inequality, pick the greatest value $m$ of
those $j$ for which $\frac{2p_{j}-1}{2(j-p_{j})+1}=\max\limits_{1\leq i\leq
r} \frac{2p_{i}-1}{2(i-p_{i})+1}$. Suppose there exists a family of controls
\begin{equation*}
\{ v_{\eta }=\left( v_{0,\eta },v_{1,\eta },\,...\,,v_{r,\eta }\right), \
\eta >0 \}
\end{equation*}
such that
\begin{equation*}
\left\Vert \hat{u}-v_{\eta }\right\Vert _{\gamma _{r}}=O(\eta );\qquad
\lim\limits_{\eta \rightarrow 0^{+}}\left\Vert v\right\Vert _{L_{2}}\eta ^{%
\frac{2p_{m}-1}{2(m-p_{m})+1}}=0.
\end{equation*}
By Proposition \ref{P7} there exist constants $C_{1},C_{2}$ such that
\begin{eqnarray*}
&& \hspace{-0.8cm} C_{1}\eta ^{-\frac{2p_{m}-1}{2(m-p_{m})+1}} \leq \\
&& \leq \Big\Vert v_{m}\! + R_{m,m}^{-1} \hspace{-0.3cm} \sum\limits_{m\leq
i<j\leq r} \hspace{-0.3cm} \left( Q_{m,m}B_{i,j}-B_{m,m}^{\prime }Q^{\prime
}_{i,j}\right) \phi ^{i-m+1}v_{j}\Big\Vert _{L_{2}} \leq C_{2}\left\Vert
v\right\Vert _{L_{2}},
\end{eqnarray*}%
which is a contradiction.

\section{Singularity of nonlinear control-affine problems}

\label{cap}

While we have managed to provide exhaustive analysis of the degree of
singularity for singular linear quadratic problems, similar questions for
control-affine nonlinear problem (\ref{j0t})-(\ref{af})-(\ref{ep}) still
need to be answered. \medskip

\noindent \textbf{Question.} \textit{Is the set of possible values of degree
of singularity $\sigma_T$ for control affine problems (\ref{j0t})-(\ref{af})-(\ref%
{ep}) finite? Is the value of $\sigma_T$ semiinteger for generic
boundary data? Does this set of numbers correspond to a (local)
stratification of the state space?} $\Box$ \medskip

There is research activity related to this question.

It is important to correlate the degree of singularity $\sigma_T$
with the order of singularity introduced by Kelley, Kopp and Moyer
in \cite{KKM}) (see also \cite{Krener}) for singular extremals of
optimal control problems.

Another recent study carried out in the context of sub-Rie\-man\-ni\-an
geometry and motion planning, invokes concepts of \textit{entropy} and
\textit{complexity} (see \cite{Jean, GaZ}). The connection of these notions
on one side with the degree of singularity on the other side is yet to be
clarified.

In the following Sections we study the degree of singularity for generalized
minimizers of control-affine problem (\ref{j0t})-(\ref{af})-(\ref{ep}) with
\textit{positive state-quadratic cost}.

Apparently commutativity/noncommutativity of controlled vector
fields affects the value of order of singularity $\sigma_T $ a great
deal. The connection between the commutativity/noncommutativity and
generalized minimizers is an established fact
\cite{Orl,Sar,Br,BRam}. Yet it is not well understood, how the Lie
structure is revealed in the properties of generalized minimizers.
The commutativity assumption is not immediately
apparent in the linear-quadratic case, but in fact, (\ref{Z13}) is \textit{%
the} commutativity condition for the class of singular problems discussed in
the previous Section.

Here is the list of results on nonlinear control-affine problems, which
appear in the following sections.

We start with control-linear (=driftless) case and prove (Section~\ref{ncdr}%
) that for generic boundary data degree of singularity equals $\frac{1}{2}$
independently of commutativity/noncommutativity of inputs.

In Section~\ref{cafm} we establish an upper estimate $\sigma_T \leq
3/2$ for the degree of singularity of control-affine problems
(\ref{j0t})-(\ref{af})-(\ref{ep}) with commuting controlled vector
fields and positive state-quadratic cost (Theorem~\ref{taf3/2}). In
Subsections~\ref{S31}-\ref{32sketch} we provide a sketch of the
proof of this result. The proof of the main result in Subsection
\ref{SS33} can be found in \cite{GS2}. A full proof for the material
in Subsection \ref{32sketch} can be found in Subsection \ref{SS4b}
below.

In Section~\ref{S8} we provide some evidence which allows us to
conjecture that the degree of singularity in the commutative case
should be $\leq 1$. An example is examined.

\section{Non-commutative driftless case: general result}

\label{ncdr}

It turns out that the non-commutative \textit{driftless} case
\begin{eqnarray}
J_{0}^{T}(u(\cdot )) &=&\int_{0}^{T}x(t)^{\prime }Px(t)dt\rightarrow
\min
,\qquad P>0,  \notag \\
\dot{x} &=&\sum_{j=1}^{r}g^{j}u_{j},\qquad x(0)=x_{0},\qquad x(T)=x_{T},
\label{colin}
\end{eqnarray}%
is rather simple. In this case for generic boundary data the value
of the order of singularity equals $\sigma _{T}=1/2$ , i.e. it does
not depend on the Lie structure of the system of vector fields
$\{g^{1},\ldots ,g^{r}\}$.

\begin{theorem}
\label{Thdriftless} Let $\mathcal{A}_{x_{0}}$ be the attainable set (in the
driftless case the orbit) of the control system (\ref{colin}). Let $x_T \in
\mathcal{A}_{x_{0}}$ and
\begin{equation}  \label{pinf}
\alpha =\inf \{x^{\prime}Px| \ x \in \mathcal{A}_{x_{0}}\} .
\end{equation}
Then:

\begin{itemize}
\item[i)] the infimum $\inf J_0^T=\alpha T$; \smallskip

\item[ii)] the degree of singularity $\sigma _{T}\geq \frac{1}{2}$ unless $
x_{0}^{\prime }Px_{0}=x_{T}^{\prime }Px_{T}=\alpha $; \smallskip

\item[iii)] if the infimum (\ref{pinf}) is attained then the degree of
singularity $\sigma _{T}\leq \frac{1}{2}.\ \Box $
\end{itemize}
\end{theorem}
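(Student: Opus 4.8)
The plan is to exploit the fact that in the driftless case the trajectory is confined to the orbit $\mathcal A_{x_0}$, so the pointwise integrand $x(t)'Px(t)$ is bounded below by $\alpha$ along any admissible trajectory; integrating gives $J_0^T(u)\ge\alpha T$, which is half of assertion (i). For the reverse inequality I would construct near-optimal controls: by controllability along the orbit we can steer from $x_0$ to (a point $\varepsilon$-close to) a near-minimizer $x^\ast$ of $x'Px$ on $\mathcal A_{x_0}$, dwell there for almost all of $[0,T]$, then steer to $x_T$; the two "transfer" phases can be made to occupy time intervals of length $O(\delta)$ by speeding up the control (rescaling time), at the cost of making $\|u\|_{L_2}$ large, but their contribution to $J_0^T$ is $O(\delta)$. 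Letting $\delta\to0$ and using continuity of the input-to-trajectory map yields $\inf J_0^T\le\alpha T$, completing (i).

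For (iii), assume the infimum in \eqref{pinf} is attained at some $\bar x\in\mathcal A_{x_0}$. Here I would carry out the same three-phase construction but quantify it: steer $x_0\to\bar x$ in time $\tau$, stay at $\bar x$ on $[\tau,T-\tau]$ (here the integrand is exactly $\alpha$, so no cost beyond $\alpha(T-2\tau)$), then steer $\bar x\to x_T$ in time $\tau$. Reparametrizing a fixed transfer control to run on an interval of length $\tau$ multiplies its $L_2$-norm by $\tau^{-1/2}$, and the excess cost over $\alpha T$ is $O(\tau)$. Taking $\tau$ comparable to $\varepsilon$ we get $\|u\|_{L_2}=O(\varepsilon^{-1/2})$ with $J_0^T(u)\le\inf J_0^T+O(\varepsilon)$ and the endpoint met exactly, whence $\sigma_T\le\tfrac12$ from Definition \ref{degsing}.

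For (ii), suppose $\sigma_T<\tfrac12$; then there is a family $u_\varepsilon$ with $\|u_\varepsilon\|_{L_2}=o(\varepsilon^{-1/2})$, $J_0^T(u_\varepsilon)\le\alpha T+\varepsilon$ and $|x_{u_\varepsilon}(T)-x_T|<\varepsilon$. Since $\int_0^T(x_{u_\varepsilon}'Px_{u_\varepsilon}-\alpha)\,dt\le\varepsilon$ and the integrand is nonnegative, the set of times where $x_{u_\varepsilon}(t)'Px_{u_\varepsilon}(t)>\alpha+\sqrt\varepsilon$ has measure $O(\sqrt\varepsilon)$; so for most $t$ the state is within $O(\varepsilon^{1/4})$ of the level set $\{x'Px=\alpha\}$. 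In particular there are times near $0$ and near $T$ where this holds, forcing $x_0$ and $x_T$ to be "almost" on that level set; combined with the bound $\|u_\varepsilon\|_{L_2}=o(\varepsilon^{-1/2})$, which limits how far the trajectory can move in short time (via $|x_{u_\varepsilon}(t)-x_0|\le\sup\|G\|\int_0^t|u_\varepsilon|\le\sup\|G\|\,\sqrt t\,\|u_\varepsilon\|_{L_2}$), one deduces in the limit $x_0'Px_0=x_T'Px_T=\alpha$, the excluded case. The main obstacle is making this last step quantitatively tight: one must balance the measure estimate on the "bad" time set against the displacement bound so that the residual at $t=0$ and $t=T$ genuinely vanishes as $\varepsilon\to0$; choosing the intermediate threshold (here $\sqrt\varepsilon$, and the resulting $O(\varepsilon^{1/4})$ window) correctly, and handling the interplay with $|x_{u_\varepsilon}(T)-x_T|<\varepsilon$, is where the real work lies.
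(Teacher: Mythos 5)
Your plan for parts (i) and (iii) is essentially the paper's own argument: a three--phase control (transfer $x_0\to\hat x$, dwell at a minimizer $\hat x$, transfer $\hat x\to x_T$) with the two transfer arcs reparametrized onto intervals of length $1/n$, so that $\|u_n\|_{L_2[0,T]}^2=n\left(\|u_0\|_{L_2[0,1]}^2+\|u_T\|_{L_2[0,1]}^2\right)$ while $J_0^T(u_n)=\inf J_0^T+O(1/n)$ and the endpoint is met exactly; this gives $\sigma_T\le\frac12$. (The paper dismisses (i) as obvious; your observation that $x_u(t)\in\mathcal A_{x_0}$ pointwise, hence $J_0^T\ge\alpha T$, is the right justification for the lower bound.)

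For (ii) your strategy is also the right one --- a Chebyshev estimate on the time set where the integrand exceeds the infimum, combined with the Cauchy--Schwarz displacement bound $|x_u(t)-x_0|\le M\sqrt{t}\,\|u\|_{L_2}$ --- but the quantitative choices you propose do not close the argument, as you yourself suspect. With threshold $\sqrt\varepsilon$ the ``bad'' set has measure $O(\sqrt\varepsilon)$, so the first good time is only guaranteed at $t_\varepsilon=O(\sqrt\varepsilon)$, and then $M\sqrt{t_\varepsilon}\,\|u_\varepsilon\|_{L_2}=O(\varepsilon^{1/4})\cdot o(\varepsilon^{-1/2})$, which diverges: the displacement bound tells you nothing. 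The fix, which is exactly what the paper does, is to take the threshold to be a \emph{fixed} constant $\delta>0$ rather than a quantity shrinking with $\varepsilon$. Assume without loss of generality $x_0'Px_0>\alpha$; let $\hat x$ minimize the form on $\overline{\mathcal A_{x_0}}$ (such a minimizer exists by coercivity of $x\mapsto x'Px$, even when the infimum in \eqref{pinf} is not attained on $\mathcal A_{x_0}$), and choose $\delta,\rho>0$ so that $x'Px<\hat x'P\hat x+\delta$ forces $|x-x_0|\ge\rho$. For any control with $J_0^T(u_\varepsilon)<\inf J_0^T+\varepsilon$ the bad set then has measure less than $\varepsilon/\delta$, so there is $\hat t_\varepsilon<\varepsilon/\delta$ with $|x_{u_\varepsilon}(\hat t_\varepsilon)-x_0|\ge\rho$, and Cauchy--Schwarz gives $\rho\le M\sqrt{\varepsilon/\delta}\,\|u_\varepsilon\|_{L_2}$, i.e. $\|u_\varepsilon\|_{L_2}\ge C\varepsilon^{-1/2}$ for \emph{every} $\varepsilon$-minimizing control. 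This yields $\sigma_T\ge\frac12$ directly, with no contrapositive, no intermediate $\varepsilon^{1/4}$ window, and no delicate interplay with the endpoint tolerance (if instead only $x_T'Px_T>\alpha$, one runs the same estimate backwards from $t=T$, where the tolerance $|x_{u_\varepsilon}(T)-x_T|<\varepsilon$ costs only an $\varepsilon$ against the fixed $\rho$).
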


A detailed proof can be found in Subsection~\ref{dlproof}. Here is a
brief idea for the case when the system $\{g^{1},\ldots ,g^{r}\}$
has complete Lie rank. Then, roughly speaking, generalized optimal
trajectory consists of three 'pieces': an initial 'jump', which
brings it to the origin $0$, a constant piece $x(t)\equiv 0,\ t\in
]0,T[$, and a final 'jump' to the end point $x(T)=x_{T}$. Evidently
$\inf J_{0}^{T}=0$ and a simple homogeneity based argument shows
that $\sigma _{T}\leq 1/2$.

To prove that in fact $\sigma _{T}=1/2$ whenever $x_{0}\neq 0$ or $x_{T}\neq 0$%
, assume that $x_{0}\neq 0$ (the case $x_{T}\neq 0$ is analogous), fix $%
\varepsilon >0$ and take a control $u_{\varepsilon }$, such that
\begin{equation}
J_{0}^{T}(u_{\varepsilon })=\int_{0}^{T}x_{u_{\varepsilon }}^{\prime
}(t)Px_{u_{\varepsilon }}(t)dt<\inf J_{0}^{T}+\varepsilon =\varepsilon .
\label{peps}
\end{equation}%
Consider the set $\left\{x\in I\!\!R^{n}|\ x^{\prime }Px\leq \frac 1
2 x_{0}^{\prime
}Px_{0} \right\}$; let $\rho $ be the distance from $x_{0}$ to this set. Since $%
x^{\prime }Px$ is positive definite, one concludes from the
inequality (\ref{peps}), that there exists $t_{\varepsilon
}<\frac{2\varepsilon }{ x_{0}^{\prime }Px_{0}}$ such that
$x_{u_{\varepsilon }}(t_{\varepsilon })^{\ast }Px_{u_{\varepsilon
}}(t_{\varepsilon })<(1/2)x_{0}^{\prime }Px_{0}$. Then, by
Cauchy-Schwarz inequality, the control needed to achieve $
x_{u_{\varepsilon }}(t_{\varepsilon })$ from $x_{0}$ in time
$t_{\varepsilon }$, must satisfy the estimate $\Vert u_{\varepsilon
}\Vert _{L_{2}[0,T]}^{2}\geq \rho t_{\varepsilon }^{-1}\geq
C\varepsilon ^{-1}$, for some $C>0$.

\section{Degree of singularity for control-affine systems: commuting inputs}

\label{cafm}

In this section we discuss the degree of singularity of optimal
control problems of type (\ref{j0t})-(\ref{af})-(\ref{ep}) with $T <
+ \infty $.

In what follows we denote by $e^{tF}$ the flow of the
smooth field $F$; for each point $x_{0}\in \mathbb{R}^{n}$ the curve $%
t\mapsto e^{tF}x_{0}$ is the unique solution of the differential
equation
\begin{equation*}
\dot{x}=F(x),\qquad x(0)=x_{0}.
\end{equation*}%
For fixed $t$ the map $ x \mapsto e^{tF}x $ is a local
diffeomorphism in a neighborhood of any point $x_0$ such that
$e^{tF}x_0 $ exists.

For every (local) diffeomorphism $P:\mathbb{R}^{n}\mapsto
\mathbb{R}^{n}$, and any vector field $F$, we denote by $AdPF$ the
(local) field defined as
\begin{equation*}
AdPF(x)=(DP(x))^{-1}F(P(x)),
\end{equation*}%
where $DP(x)$ denotes the Jacobian matrix of $P$ evaluated at the
point $x$.

We keep the notation introduced in Section \ref{S2}, according to which $%
\phi $ denotes primitivization (i.e., $\phi u\left( t\right)
=\int_{0}^{t}u\left( \tau \right) ~d\tau ,\forall u\in L_{1,loc}$).

We introduce several assumptions.

\begin{assumption}
\label{as3} The fields $f$, $g_i$, $i=1,...,k$ are complete. The
controlled vector fields $g_i$ span $k$-dimensional involutive
distribution; for simplicity we assume that $[ g_i,g_j ] \equiv 0$
holds for all $i,j$. $\square$
\end{assumption}

The following three assumptions regard conditions on the growth of
$f$, $g^i$ and of their derivatives.

\begin{assumption}
\label{as21}For any compact set $K\subset \mathbb{R}^{n}$
\begin{equation*}
\lim\limits_{|v|\rightarrow +\infty }\left\vert e^{Gv}x\right\vert
=+\infty ,
\end{equation*}
uniformly with respect to $x\in K$.$\ \square $
\end{assumption}

\begin{assumption}
\label{as2}For any compact set $K\subset \mathbb{R}^{n}$
\begin{equation*}
\lim\limits_{|v|\rightarrow +\infty }\frac{\left\vert \frac{\partial
}{\partial x}\left( \left( e^{Gv}x\right) ^{\prime }P\left(
e^{Gv}x\right) \right) \right\vert }{\left\vert e^{Gv}x\right\vert
^{2}}=0,
\end{equation*}%
uniformly with respect to $x\in K$.$\ \square $
\end{assumption}

\begin{assumption}
\label{as1} For each compact set $K\subset \mathbb{R}^{n}$ there
exists a function $\gamma :[0,+\infty \lbrack \mapsto \mathbb{R}$
bounded below, such that:

\begin{itemize}
\item[\textit{i)}] $\lim \limits _{s \rightarrow + \infty} \frac{\gamma (s)}{%
s} = + \infty;$ \smallskip

\item[\textit{ii)}] $%
\begin{array}[t]{l}
\left| e^{Gv} x \right|^2 \geq \gamma \left( \left| \left( Ad \left(
e^{Gv} \right) f \right) (x) \right|+ \left| \frac{\partial
}{\partial x} \left( Ad
\left( e^{Gv} \right) f \right) (x) \right| \right), \medskip \\
\hspace{7cm} \forall (x,v) \in K \times \mathbb{R}^k . \ \square%
\end{array}%
$
\end{itemize}
\end{assumption}

As we will see below, Assumption \ref{as3} allows us to use a
reduction procedure analogous to the procedure employed in Section
\ref{S2} for the treatment of singular linear-quadratic problem,
while Assumptions~\ref{as21},\ref{as2} and \ref{as1} guarantee
existence of minimizers in a suitable class of generalized controls.

The Assumptions~\ref{as2} and \ref{as1} are somehow less explicitly
formulated. In the two following remarks we formulate more
particular growth conditions onto vector fields $f$ and $g^i$ and
their flows which guarantee fulfillment of the two Assumptions.

\begin{remark}
Assume that the drift vector field satisfies the growth condition
\[
    |f(y)| \leq \psi(|y|).
\]
For the flow $e^{Gv}$ generated by the  controlled vector fields, we
require
\begin{eqnarray}
  \left|De^{Gv}x\right| &=& o\left(\left|e^{Gv}x \right| \right), \label{de} \\
  \left|\left(De^{Gv}x \right)^{-1}\right| &=&
  o\left( \frac{\left|e^{Gv}x \right|^2}{\psi\left(|e^{Gv}x |\right)} \right), \label{de-1}\\
\left|D^2e^{Gv}x\right| &=&
O\left(\frac{\psi\left(|e^{Gv}x|\right)}{\left|e^{Gv}x\right|^3}
\right), \label{d2e}
\end{eqnarray}
as $|v| \rightarrow \infty$, uniformly for $x$ belonging to any
fixed compact $K$.

A typical choice of $\psi$ which guarantees completeness of $f$
would be $$\psi(|y|)=k(1+|y|).$$ In this case (\ref{de}) and
(\ref{de-1}) take the form
\[
\left|De^{Gv}x\right|+\left|\left(De^{Gv}x\right)^{-1}\right|
=o\left(\left|e^{Gv}x\right| \right),
\]
while (\ref{d2e}) would mean that $\left|D^2e^{Gv}x\right|
\left|e^{Gv}x\right|^2$ is uniformly (with respect to $x$ from a
fixed compact $K$) upper bounded for all $v. \ \square$
\end{remark}

Another case we had in mind is described in the following

\begin{remark}
In the particular case of constant vector fields  $g_{i},\
i=1,2,...,k$ the condition \textit{(ii)} of the Assumption \ref{as1}
reads
\begin{equation*}
|v|^{2}\geq \gamma \left( \left\vert f(x+Gv)\right\vert +\left\vert
Df(x+Gv)\right\vert \right) ,\qquad \forall (x,v)\in K\times
\mathbb{R}^{k},
\end{equation*}%
i.e. $|f|$ and $|Df|$ must exhibit subquadratic growth along the
directions spanned by $g_i, \ i=1,2,...,k$.

It is straightforward to check that Assumptions \ref{as21} and
\ref{as2} hold if the fields $g_i, \ i=1,2,...,k$ are linearly
independent. $\square $
\end{remark}

Our main result in the commutative case is the following.

\begin{theorem}
\label{taf3/2}If Assumptions \ref{as3}, \ref{as21}, \ref{as2} and
\ref{as1} hold for problem (\ref{j0t})-(\ref{af})-(\ref{ep}), then
\begin{equation*}
\sigma _{T}\leq \frac{3}{2}
\end{equation*}
for $T<+\infty $ and generic boundary conditions.$\ \square $
\end{theorem}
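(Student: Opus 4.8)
The plan is to mimic, in the nonlinear commutative setting, the desingularization strategy that worked for the linear-quadratic problem in Section~\ref{S2}, and then to read off the degree of singularity from the resulting ``reduced'' problem exactly as Theorem~\ref{c muT} does in the LQ case. Since $[g_i,g_j]\equiv 0$ and the $g_i$ are complete (Assumption~\ref{as3}), the flow $(x,v)\mapsto e^{Gv}x$ is a well-defined action of $\mathbb R^k$, and we may introduce the change of variables $y=e^{-Gv(t)}x(t)$ with $v=\phi u$ (the primitive of the control). A direct computation using commutativity shows that in the new coordinates the dynamics becomes $\dot y = \bigl(Ad\,e^{Gv}\bigr)f(y)$ driven only through $v=\phi u$ (not through $u$ itself), i.e. the control enters one ``integration'' more smoothly, precisely as $\gamma$ did in Proposition~\ref{P6}. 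The cost $\int_0^T x'Px\,dt$ rewrites as $\int_0^T \bigl(e^{Gv(t)}y(t)\bigr)'P\bigl(e^{Gv(t)}y(t)\bigr)\,dt$, which depends on $u$ only via $v=\phi u$ as well. This is the nonlinear analogue of the first desingularization step, and it raises the smoothness class of admissible generalized controls from ``distributions of order $\le 1$'' to ``derivatives of $L_2$ functions,'' matching the LQ order-of-singularity bookkeeping that produced the $r-1/2$ and, for $r=2$, the $3/2$ value.

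The second block of the argument is to establish that the reduced problem — cost and dynamics now depending smoothly on $v\in H_1$ (equivalently on $u$ through one primitivization) — actually possesses a generalized minimizer in the appropriate weak space, and that this minimizer is, for generic boundary data, a sum of an $L_\infty$ (in fact analytic away from the endpoints, or at worst bounded) control plus an endpoint-supported distribution of order at most one in $v$, i.e. order at most two in $u$. Here is where Assumptions~\ref{as21}, \ref{as2}, \ref{as1} enter: Assumption~\ref{as21} gives properness of $v\mapsto e^{Gv}x$ so that large controls are genuinely penalized by the state cost; Assumption~\ref{as2} says the gradient of the state-cost in the reduced coordinates grows subquadratically relative to $|e^{Gv}x|^2$, which is exactly what is needed for lower semicontinuity / coercivity estimates in the weak topology; and Assumption~\ref{as1}(i)--(ii) forces the transported drift $Ad(e^{Gv})f$ and its derivative to grow subquadratically along the $g_i$-directions, so that the drift term cannot destroy the coercivity coming from the cost. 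Combining these, one obtains a compactness argument (weak-$*$ limits of minimizing sequences in the $H_{-1}$-type space for $u$, equivalently $L_2$ for $v$) yielding existence of a generalized optimal $\hat u$ whose singular part is concentrated at $t=0,T$ and has order $\le 2$ in $u$.

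Once the generalized minimizer $\hat u = (\text{regular part}) + (\text{endpoint distribution of order }\le 2)$ is in hand, the degree of singularity is computed by the same approximation machinery as in the LQ case. Concretely, one applies Proposition~\ref{P7} with $m\le 2$ and $p$ the relevant smoothness index ($p=1$ for the once-primitivized control) to approximate the endpoint distributions $\delta,\ \delta'$ by $L_2$ functions $\Delta_{i,\eta}$ with $\|\Delta_{i,\eta}\|_{L_2}=O(\eta^{-(2m-1)/(2(p-m)+1)})$ while keeping the reduced-coordinate error $O(\eta)$; since the input-to-trajectory map and the cost are (locally uniformly) continuous in the weak norm — which one must verify using the growth assumptions, this being the nonlinear substitute for Proposition~\ref{cont} — the resulting family $u_\eta$ is $\varepsilon$-minimizing for $J_0^T$ with $\varepsilon = O(\eta^{\text{const}})$, and the exponent bookkeeping gives $\sigma_T\le (2\cdot 2-1)/(2(2-2)+1)\cdot\tfrac12$-style arithmetic collapsing to $3/2$ for the generic (order-two) endpoint distribution. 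I expect the main obstacle to be precisely the nonlinear replacement of Proposition~\ref{cont}/Proposition~\ref{thcoercive}: proving that the reduced cost $\int_0^T (e^{Gv}y)'P(e^{Gv}y)\,dt$ together with the transported dynamics $\dot y = Ad(e^{Gv})f(y)$ is (a) quadratically coercive modulo finite-dimensional directions and (b) continuous in the weak ($L_2$-in-$v$) topology, uniformly on the relevant sets — this is exactly what Assumptions~\ref{as21}--\ref{as1} are designed to deliver, but verifying that they suffice, and handling the coupling between the drift $f$ and the $g_i$-flow carefully enough to pass to weak limits, is the technical heart of the proof (and is why the full argument is deferred to \cite{GS2} and Subsection~\ref{SS4b}).
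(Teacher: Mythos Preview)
Your first block --- the Goh transform $x=e^{G\phi u}y$ reducing to $\dot y=(Ad\,e^{Gv})f(y)$ with cost $\int_0^T(e^{Gv}y)'P(e^{Gv}y)\,dt$, $v=\phi u$ --- is exactly right and matches the paper. The rest of the proposal, however, misidentifies the structure of the generalized minimizer and consequently the mechanism that produces the exponent $3/2$.

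The reduced problem is \emph{not} convex in $v$: for fixed $y$, the set $\{(Ad\,e^{Gv})f(y):v\in\mathbb R^k\}$ is a curve (or low-dimensional manifold) in $\mathbb R^n$, not a convex set. Hence the reduced problem typically has no classical minimizer in $v$, and the optimal generalized control is \emph{not} a regular function plus endpoint-supported distributions as in the LQ case. What exists (this is Theorem~\ref{exlr}, proved in \cite{GS2} under Assumptions~\ref{as21}--\ref{as1}) is a \emph{relaxed} (Gamkrelidze, chattering) minimizer $\eta_t=\sum_{j=1}^{n+2}p_j(t)\delta_{v^j(t)}$ with essentially bounded $v^j$. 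The role of the growth assumptions is to guarantee existence and Lipschitzian regularity of this relaxed minimizer, not coercivity/weak continuity of a distributional extension.

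Accordingly, the $3/2$ does not come from Proposition~\ref{P7}. It comes from a two-step approximation of the chattering control (Proposition~\ref{P5}): first approximate $\eta_t$ by a piecewise constant ordinary control $w_\varepsilon$ with $O(1/\varepsilon)$ discontinuities so that trajectory and cost are $O(\varepsilon)$-close (a quantitative Gamkrelidze approximation lemma); then smooth each jump of $w_\varepsilon$ over an interval of length $\varepsilon^2$ to get an absolutely continuous $v_\varepsilon$. Counting gives $\|\dot v_\varepsilon\|_{L_2}^2=O\bigl((1/\varepsilon)\cdot(1/\varepsilon^2)^2\cdot\varepsilon^2\bigr)=O(\varepsilon^{-3})$, whence $\sigma_T\le 3/2$. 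Your attempt to extract $3/2$ from Proposition~\ref{P7} cannot work anyway: after a single primitivization the relevant index is $p=1$, but you invoke an order-$2$ distribution ($m=2$), which violates the hypothesis $p\ge m$ of Proposition~\ref{P7}; and with $m=1$, $p=1$ the exponent is $1$, not $3/2$.
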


The rest of this Section contains sketch of the proof of Theorem
\ref{taf3/2}. The feature which distinguishes the proof is an
\textit{unbounded} set of control parameters. In this context some
components of the proof gain (in our opinion) an independent
interest. Among those is Theorem~\ref{exlr} on existence  and
Lipschitzian regularity of relaxed minimizing trajectories in the
case of unbounded controls. We discuss this question in details in
\cite{GS2}. Another important issue is Proposition~\ref{P5} on
approximation of relaxed trajectories by ordinary ones and on
estimates of the variation of the approximants; this result is
proved in Subsection \ref{SS4b}.

\subsection{Proof of Theorem~\protect\ref{taf3/2}: desingularization}

\label{S31}

First we proceed with a "desingularizing transformation", which
appeared in \cite{AgS} under the name of 'reduction' and proved to
be useful for analysis of control-affine systems with unconstrained
controls.

\begin{proposition}
\label{p reduction} Under Assumption \ref{as3} the following holds:
\begin{equation*}
x_u (t) = e^{G \phi u (t) } y_{\phi u } (t), \qquad \forall t \in [0,T], \ u
\in L_\infty [0,T] ,
\end{equation*}
where $x_u$ denotes the trajectory of the system
\begin{equation*}
\dot x (t) = f(x(t)) +G(x(t))u(t), \qquad x(0) = x_0 ,
\end{equation*}
$y_v$ denotes the trajectory of the system
\begin{equation*}
\dot y (t) = \left( Ad \left( e^{Gv(t)}\right) f \right) (y(t)), \qquad y(0)
= x_0. \ \square
\end{equation*}
\end{proposition}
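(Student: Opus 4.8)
The plan is to verify the claimed identity $x_u(t) = e^{G\phi u(t)}y_{\phi u}(t)$ by differentiating the right-hand side and showing it satisfies the same Cauchy problem as $x_u$, then invoke uniqueness of solutions. Set $v(t) = \phi u(t)$, so that $\dot v = u$ and $v(0)=0$, and put $z(t) = e^{Gv(t)}y_v(t)$. At $t=0$ we have $z(0) = e^{0}y_v(0) = x_0$, matching the initial condition. The core of the argument is the chain-rule computation of $\dot z$.

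First I would record the two differentiation rules needed. For a constant array of commuting vector fields $G = (g_1,\dots,g_k)$, the map $(w,y)\mapsto e^{Gw}y$ satisfies $\frac{\partial}{\partial w_i} e^{Gw}y = g_i(e^{Gw}y)$ — this is exactly where Assumption~\ref{as3} (completeness and $[g_i,g_j]\equiv 0$) enters, since commutativity guarantees $e^{Gw}$ is a well-defined flow of the ``total'' field $\sum w_i g_i$ and that partials in different directions commute. Consequently $\frac{\partial}{\partial w}(e^{Gw}y)\,\dot w = G(e^{Gw}y)\,\dot w$. Second, the $y$-dependence: $\frac{\partial}{\partial y}(e^{Gw}y) = D(e^{Gw})(y)$, the Jacobian of the diffeomorphism $y\mapsto e^{Gw}y$. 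Then
\begin{equation*}
\dot z(t) = G\big(e^{Gv(t)}y_v(t)\big)\,u(t) + D\big(e^{Gv(t)}\big)(y_v(t))\,\dot y_v(t).
\end{equation*}
Now substitute $\dot y_v(t) = \big(Ad(e^{Gv(t)})f\big)(y_v(t)) = \big(D(e^{Gv(t)})(y_v(t))\big)^{-1} f\big(e^{Gv(t)}y_v(t)\big)$, using the definition of $Ad$ given in the paper. The Jacobian factors cancel, leaving $D(e^{Gv})(y_v)\,\dot y_v = f(e^{Gv}y_v) = f(z)$. Hence $\dot z(t) = G(z(t))u(t) + f(z(t))$, i.e. $z$ solves $\dot z = f(z) + G(z)u$, $z(0)=x_0$, which is precisely the defining ODE for $x_u$. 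By uniqueness of solutions to this Cauchy problem (the right-hand side is locally Lipschitz in $x$ since $f,G$ are smooth, and $u\in L_\infty[0,T]$ so Carathéodory conditions hold), $z(t) = x_u(t)$ on the maximal interval of existence, which contains $[0,T]$ by hypothesis that $x_u$ is defined there.

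The main obstacle, such as it is, is purely a matter of bookkeeping: justifying the differentiation under the flow map and the identity $\frac{\partial}{\partial w}(e^{Gw}y) = G(e^{Gw}y)$ rigorously when the $g_i$ commute — one should note that commutativity makes $w\mapsto e^{Gw}$ a group action of $\mathbb{R}^k$, so $e^{G(w+se_i)}y = e^{se_ig_i}(e^{Gw}y)$ and the $i$-th partial is $g_i$ evaluated at $e^{Gw}y$; without commutativity this fails and the Baker--Campbell--Hausdorff correction terms would appear. A secondary point is that the computation is done first for $u\in L_\infty[0,T]$ where $v=\phi u$ is Lipschitz and everything is classical; this is exactly the regularity hypothesis in the statement, so no extension argument is needed here. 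Everything else — smoothness of $e^{Gw}$ in $(w,y)$, invertibility of its Jacobian for each fixed $w$ — was already noted in the paragraphs preceding the Proposition.
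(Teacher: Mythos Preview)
Your proof is correct and follows the standard direct-verification route: define $z(t)=e^{Gv(t)}y_v(t)$, differentiate, and use the definition of $Ad$ together with the commutativity identity $\partial_{w_i}(e^{Gw}y)=g_i(e^{Gw}y)$ to show $z$ solves the same Cauchy problem as $x_u$. The paper itself does not supply a proof of this proposition; it is stated with a reference to \cite{AgS}, where the reduction technique originates, so there is nothing further to compare against.
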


The substitution $x = e^{G \phi u (t) } y $ (sometimes called in the
literature \textit{Goh transform}) leads us to the following
'desingularized' problem\footnote{An anonymous referee brought to
our attention a publication by Yu. Orlov \cite{Or85} where such
transform has been introduced (without Lie algebraic notation) with
the scope of passing from control problems with measure-like
controls to classical control problems. This construction is
described by the same author in  \cite[Ch. 4]{Or1}.}
\begin{eqnarray}  \label{raf1}
&& \dot y (t) = \left( Ad \left( e^{Gv(t)}\right) f \right) (y(t)) , \\
&& J_r(v) = \int_0^T \left( e^{G v (t) } y(t) \right)^\prime P \left( e^{G v
(t) } y(t) \right) dt \rightarrow \min ,  \label{rq}
\end{eqnarray}
with boundary conditions
\begin{equation}  \label{rep}
y(0)=x_0, \qquad y(T)=e^{GV} x_T, \quad V \in \mathbb{R}^k.
\end{equation}

Notice that in the case when the fields $g_i, \ i=1,2,...,k$ are constant we
have $e^{Gv}y = y+Gv$, $\left( Ad \left( e^{Gv} \right) f \right) (y) = f(y+
Gv) $. Therefore, the reduced cost functional (\ref{rq}) takes the form

\begin{equation}  \label{rqc}
J_r(v) = \int_0^T y(t)^\prime P y(t) + 2 v(t)^\prime G^\prime P y(t)
+ v(t)^\prime G\prime P G v(t) \, dt .
\end{equation}
Thus, in this case the reduction procedure achieves
desingularization in the sense that the integrand in (\ref{rqc})
exhibits quadratic growth with respect to the new control $v$
(provided $G$ is full rank). This does not hold in the general case
with nonconstant $G$, but it can be reasonably expected that it is a
fairly generic outcome. However, one important feature of the
nonlinear optimal control problem
(\ref{raf1})-(\ref{rq})-(\ref{rep}) is its lack of convexity with
respect to $v$. If we introduce the notation
\begin{equation}  \label{laf}
\left( \tilde f_0 (y,v), \tilde f (y,v) \right) = \left( \left( e^{G v } y
\right)^\prime P \left( e^{G v } y \right) , \left( Ad \left( e^{Gv} \right)
f \right) (y) \right),
\end{equation}
then, for generic $y \in \mathbb{R }^n$ (fixed) the set
\begin{equation*}
\Gamma(y)=\left\{ \left( y_0, \tilde f (y,v) \right) : y_0 \geq \tilde f_0
(y,v), \ v \in \mathbb{R}^m \right\} \subset \mathbb{R}\times \mathbb{R}^n
\end{equation*}
is in general nonconvex, even in the case when $G$ is constant. For
example, for scalar $v$ the set $\left\{ \tilde f(y,v), v \in
\mathbb{R }\right\}$ is a curve in $\mathbb{R}^n$.

It follows that classical minimizers for the reduced problem (\ref{raf1})-(%
\ref{rq})-(\ref{rep}) typically fail to exist. Instead, existence of \textit{%
relaxed} minimizers can be established under the Assumptions
\ref{as3}, \ref{as21}, \ref{as2} and \ref{as1}.

\subsection{Proof of Theorem~\protect\ref{taf3/2}: digression on relaxed
controls}

\label{dig}

Recall that a relaxed control can be seen as a family $t\mapsto \eta _{t}$
of probability measures in the space of $v\in \mathbb{R}^{k}$, such that $%
t\mapsto \eta _{t}$ is measurable in the weak sense with respect to $t\in
\lbrack 0,T]$.

\begin{definition}
\label{D1}Consider a nonempty set $A\subset \mathbb{R}^{k}$. We denote by $%
\mathcal{M}_{A}$ the set of inner regular probability measures in $\mathbb{R}%
^{k}$ with compact support contained in $A$. We denote by $\mathcal{M}_{A}%
\left[ 0,T\right] $\ the set of functions $t\mapsto \eta _{t}$, $t\in \left[
0,T\right] $, where:

\begin{itemize}
\item[i)] $\eta _{t} \in \mathcal{M}_A, \qquad \forall t \in [0,T]$;
\smallskip

\item[ii)] For any continuous function, $g:\mathbb{R}^{1+k}\mapsto \mathbb{R}
$, the function $t\mapsto \int_{\mathbb{R}^{k}}g\left( t,u\right) \
\eta _{t}\left( du\right) $ is measurable. $\square$
\end{itemize}
\end{definition}

Let $\delta _{v}$ denote the Dirac measure in $\mathbb{R}^{k}$ concentrated
at the point $v$. Obviously, the space of measurable essentially bounded
functions can be embedded in the space $\mathcal{M}_{\mathbb{R}^{k}}\left[
0,T\right] $ through the map $v\left( t\right) \mapsto \delta _{v\left(
t\right) }$.

The dynamic equation (\ref{raf1}) and cost functional (\ref{rq})
corresponding to a relaxed control become
\begin{eqnarray*}
&&\dot{y}(t)=\int_{\mathbb{R}^{k}}\left( Ad\left( e^{Gv}\right) f\right)
(y(t))\,\eta _{t}(dv), \\
&&J_{r}(\eta )=\int_{0}^{T}\int_{\mathbb{R}^{k}}\left( e^{Gv}y(t)\right)
^{\prime }P\left( e^{Gv}y(t)\right) \,\eta _{t}(dv)\,dt\rightarrow \min .
\end{eqnarray*}%
It is clear that the equations above coincide with (\ref{raf1}) and
(\ref{rq}) in the case when $\eta _{t}=\delta _{v\left( t\right) }$
holds at almost every $t\in \left[ 0,T\right] $ for some function
$v\in L_{\infty }^{k}\left[ 0,T\right] $.

We use the short notation $\left\langle \eta , f \right\rangle $ to
indicate the averaging of $f$ by the measure $\eta$. I.e., for $g:
\mathbb R^{1+k} \mapsto \mathbb R$, $\eta \in \mathcal M _A [0,T]$,
$ \left\langle \eta_t, g(t, \cdot ) \right\rangle $ denotes the
function \[ \left\langle \eta_t, g(t, \cdot ) \right\rangle = \int
_{\mathbb R^k} g(t,u) \eta_t (du) , \qquad t \in [0,T]. \] The
following proposition (see \cite{Gam}) relates relaxation with the
convexification of the right-hand side.

\begin{proposition}
\label{P12}Consider a $C_{1}$-map $X:\mathbb{R}^k \mapsto \mathbb{R}^{n}$,
and a nonempty set $A \subset \mathbb{R}^k$. Then,
\begin{equation}  \label{chul}
\left\{ \langle \eta , X \rangle , \ \eta \in \mathcal{M}_A\right\}
=conv\left\{ X\left( v\right) ,~v\in A \right\} .\square
\end{equation}
\end{proposition}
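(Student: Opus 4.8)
The plan is to prove the two inclusions in \eqref{chul} separately, using standard facts about relaxed (Young) measures together with Carath\'eodory's theorem. The inclusion $\supseteq$ is the easy one: for any $v \in A$ the Dirac measure $\delta_v$ belongs to $\mathcal M_A$ and $\langle \delta_v , X \rangle = X(v)$, so $\{ X(v) : v \in A \} \subseteq \{ \langle \eta , X \rangle : \eta \in \mathcal M_A \}$. Since the latter set is convex (a convex combination $\sum_i \lambda_i \langle \eta_i , X \rangle$ equals $\langle \sum_i \lambda_i \eta_i , X \rangle$ and $\sum_i \lambda_i \eta_i \in \mathcal M_A$ because the class of compactly supported inner regular probability measures on $A$ is closed under finite convex combinations), it contains the convex hull $\mathrm{conv}\{ X(v) : v \in A \}$. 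This gives one inclusion with essentially no work.

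For the reverse inclusion $\subseteq$, take any $\eta \in \mathcal M_A$, so $\eta$ is an inner regular probability measure with compact support $K \subseteq A$. The image measure $X_* \eta$ is a probability measure supported on the compact set $X(K) \subseteq X(A)$, and $\langle \eta , X \rangle = \int_{\mathbb R^k} X(v)\, \eta(dv) = \int_{\mathbb R^n} y \, (X_*\eta)(dy)$ is precisely the barycenter of $X_*\eta$. A probability measure supported on a compact set has its barycenter in the closed convex hull of that set; but I want the (non-closed) convex hull. Here one invokes the finite-dimensional feature: $\langle \eta, X\rangle \in \mathbb R^n$, and by Carath\'eodory's theorem the barycenter of any probability measure on $X(A) \subseteq \mathbb R^n$ lies in $\mathrm{conv}(X(A))$ — indeed it can be written as a convex combination of at most $n+1$ points of $X(A)$. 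This is the step that actually uses that $X$ maps into a finite-dimensional space and that we are averaging with a probability measure; it is the technical heart, though it is a classical fact (a consequence of Carath\'eodory's theorem applied to the $(n+1)$-dimensional set $\{(1,y) : y \in X(A)\}$, or a Krein--Milman / Choquet-type argument). Combining the two inclusions yields \eqref{chul}.

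I expect the main (minor) obstacle to be the bookkeeping around the second inclusion: one must be careful that $\mathcal M_A$ consists of \emph{compactly supported} measures, so that $X(K)$ is compact and the barycenter integral is well defined and finite without any growth hypothesis on $X$; and one must cite the correct form of Carath\'eodory's theorem to land in $\mathrm{conv}(X(A))$ rather than merely its closure. Everything else is routine. No continuity of $X$ beyond $C_1$ (indeed mere continuity, or even Borel measurability plus local boundedness) is really needed for this pointwise statement; the $C_1$ hypothesis is there because the proposition will be applied with $X(v) = \bigl(\,(e^{Gv}y)'P(e^{Gv}y),\ (Ad\,e^{Gv})f\,(y)\,\bigr)$ and smoothness is wanted downstream. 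Thus the proof is short once the two classical ingredients — closure of $\mathcal M_A$ under convex combinations and Carath\'eodory's barycenter representation — are in place.
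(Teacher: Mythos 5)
The paper offers no proof of Proposition~\ref{P12}: it is quoted from Gamkrelidze's book \cite{Gam}, so there is nothing internal to compare your argument against. Your proof is correct and self-contained. Two small remarks. First, the inclusion $\supseteq$ does not even require checking that the left-hand side is convex: a point $\sum_i\lambda_i X(v_i)$ of the convex hull is realized directly by the finitely supported measure $\eta=\sum_i\lambda_i\delta_{v_i}\in\mathcal M_A$. Second, the step you flag as the delicate one --- landing in $\mathrm{conv}(X(A))$ rather than its closure --- is immediate in the present setting and needs no separate barycenter-representation theorem: since $\eta\in\mathcal M_A$ has compact support $K\subseteq A$, the pushforward $X_*\eta$ is supported in the compact set $X(K)$, and by Carath\'eodory's theorem the convex hull of a compact subset of $\mathbb R^n$ is itself compact, hence closed; the standard separating-hyperplane argument placing the barycenter in the \emph{closed} convex hull of the support therefore already yields $\langle\eta,X\rangle\in\mathrm{conv}(X(K))\subseteq\mathrm{conv}(X(A))$. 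You are also right that the $C^1$ hypothesis on $X$ plays no role in this statement (continuity, to make $X(K)$ compact and the integral well defined, suffices); it is carried along for the later application.
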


Below we will consider a more restricted class of controls (called
sometimes {\it Gamkrelidze relaxed or chattering controls}) of
special form
\begin{equation}
\eta _{t}=\sum_{j=1}^{N}p_{j}(t)\delta _{v^{j}(t)},\qquad p_{j}(t)\geq
0,\qquad \sum_{j=1}^{N}p_{j}(t)\equiv 1.  \label{gco}
\end{equation}%
It is clear that any measurable essentially bounded control can be
identified with such a  control with $N=1$. The dynamic equation and
functional corresponding to these controls become
\begin{eqnarray}
&&\dot{y}(t)=\sum_{j=1}^{N}p_{j}(t)\left( Ad\left( e^{Gv^{j}(t)}\right)
f\right) (y(t));  \label{rel} \\
&&J_{r}(\eta )=\int_{0}^{T}\sum_{j=1}^{N}p_{j}(t)\left(
e^{Gv^{j}(t)}y(t)\right) ^{\prime }P\left( e^{Gv^{j}(t)}y(t)\right)
\,dt\rightarrow \min .  \label{relJr}
\end{eqnarray}

Looking at Proposition~\ref{P12} it is easy to understand why the class of
controls (\ref{gco}) suffices: each vector of the convex hull in (\ref{chul}%
) can be represented as a finite convex combination of $\left( Ad\left(
e^{Gv^{j}}\right) f\right) \left( y\right) $; the same is valid for the
functional. By virtue of Carath\'{e}odory theorem the combination need not
have more than $n+2$ summands.

\subsection{Existence of relaxed minimizers and Lipschitzian regularity of
optimal relaxed trajectories\label{SS33}}

The existence of minimizing relaxed controls in the case where the
set of control parameters is bounded is a well known  result closely
related to classical A.F.Filippov's existence theorem (see
\cite[Ch.8]{Gam}). Our treatment involves controls without any
\textit{a priori} bound. Existence results for relaxed minimizers in
this case, are referred to in \cite[Ch. 11]{Ces}, but we are not
aware of any previous results on Lipschitzian regularity of relaxed
minimizing trajectories. In a separate paper \cite{GS2} we present a
technically involved proof of Lipschitzian regularity of relaxed
minimizers. In our present setting the result in \cite{GS2} takes
the special form:

\begin{theorem}
\label{exlr} Under Assumptions \ref{as3}, \ref{as21}, \ref{as2} and
\ref{as1}, the reduced problem (\ref{raf1})-(\ref{rq})-(\ref{rep})
admits a  minimizer, $\eta
_{t}=\sum\limits_{j=1}^{n+2}p_{j}(t)\delta _{v^{j}(t)}$. Any such
minimizer satisfies the Pontryagin maximum principle and, provided
it does not correspond to a strictly abnormal extremal, there exists
a constant $M<+\infty $ such that
\begin{eqnarray}
&&\left\vert \left( Ad\left( e^{Gv^{j}(t)}\right) f\right) (y_{\eta
}(t))\right\vert \leq M,  \label{lif} \\
&&\left( e^{Gv^{j}(t)}y_{\eta }(t)\right) ^{\prime }P\left(
e^{Gv^{j}(t)}y_{\eta }(t)\right) \leq M\   \label{liJ}
\end{eqnarray}%
hold for $j=1,2,...,n+2$ and almost every $t\in \lbrack 0,T]$. $\square $
\end{theorem}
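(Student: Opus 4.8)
The plan is to split the statement into two parts: (a) existence of a relaxed minimizer of the special chattering form with $n+2$ atoms, and (b) the Lipschitzian regularity estimates \eqref{lif}--\eqref{liJ} for any such minimizer that is not a strictly abnormal extremal. For part (a) I would first invoke Proposition~\ref{P12} to pass to the convexified differential inclusion: the reachable sets, and hence the set of admissible relaxed trajectories on $[0,T]$, for the relaxed problem \eqref{raf1}--\eqref{rq} coincide with those for the convexified right-hand side $\mathrm{conv}\{(\tilde f_0(y,v),\tilde f(y,v)):v\in\mathbb R^k\}$, where $(\tilde f_0,\tilde f)$ is as in \eqref{laf}. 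Then I would run a direct-method argument: take a minimizing sequence of relaxed controls $\{\eta^{(m)}\}$; Assumption~\ref{as1}(ii) (superlinearity of $\gamma$) gives, along trajectories with bounded cost, an a~priori $L_1$-type bound on $|(Ad(e^{Gv})f)(y)|$ and on $|\partial_x(Ad(e^{Gv})f)(y)|$ averaged against $\eta^{(m)}_t$, which keeps the trajectories $y^{(m)}$ in a fixed compact set (using Assumption~\ref{as21} to control $|e^{Gv}y|$ and hence the state) and equi-absolutely-continuous. By the Ascoli theorem and weak-$*$ compactness of Young measures (here one must allow measures with possibly noncompact, but tight, support — this is exactly the setting referred to in \cite[Ch.~11]{Ces}), extract a limit $(\bar y,\bar\eta)$; lower semicontinuity of $v\mapsto\tilde f_0(y,v)$ together with the growth condition gives $J_r(\bar\eta)\le\liminf J_r(\eta^{(m)})$, so $\bar\eta$ is optimal. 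Finally, Carath\'eodory's theorem applied pointwise in $t$ to the convex hull in $\mathbb R^{1+n}$ lets one rewrite the optimal relaxed velocity (and integrand) as a convex combination of at most $n+2$ Dirac masses, measurably in $t$; this is the $\eta_t=\sum_{j=1}^{n+2}p_j(t)\delta_{v^j(t)}$ form.

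For part (b) I would apply the Pontryagin maximum principle to the reduced problem \eqref{raf1}--\eqref{rq}--\eqref{rep} in relaxed form. Writing the Hamiltonian $H(y,\psi,\psi_0,v)=\langle\psi,(Ad(e^{Gv})f)(y)\rangle+\psi_0\,(e^{Gv}y)'P(e^{Gv}y)$ with $\psi_0\le 0$, the maximum condition forces, for a.e.\ $t$ and every atom $v^j(t)$ in the support of the optimal $\eta_t$, the value $H(y_\eta(t),\psi(t),\psi_0,v^j(t))$ to equal the common maximum $\max_v H$. Assuming the extremal is not strictly abnormal we may take $\psi_0=-1$ (normalize), so that for each active atom $\langle\psi(t),\tilde f(y_\eta(t),v^j(t))\rangle-\tilde f_0(y_\eta(t),v^j(t))=\mathcal H(t)$, the (absolutely continuous, hence bounded on $[0,T]$) maximized Hamiltonian. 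Boundedness of $\psi(\cdot)$ on the compact time interval, together with the superlinear growth Assumption~\ref{as1}(i)--(ii), then traps each active $v^j(t)$: if $|\tilde f(y,v^j(t))|$ or $|\partial_x(Ad(e^{Gv^j(t)})f)(y)|$ were large then $\tilde f_0=|e^{Gv^j(t)}y|^2\ge\gamma(\cdots)$ would dominate the linear term $\langle\psi,\tilde f\rangle$, contradicting boundedness of $\mathcal H(t)-\langle\psi(t),\tilde f(y,v^j(t))\rangle=-\tilde f_0$; this yields \eqref{lif}, and feeding \eqref{lif} back through Assumption~\ref{as1}(ii) (or directly from $\mathcal H(t)+\tilde f_0=\langle\psi(t),\tilde f\rangle$ bounded) yields the cost bound \eqref{liJ}. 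One must also handle the normal/abnormal dichotomy carefully and check that, on the set where some atom is inactive, the estimates still hold because inactive atoms can be discarded without affecting optimality.

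The main obstacle I anticipate is the existence argument in the \emph{unbounded-control} setting: the standard Filippov/Cesari existence theorems for relaxed optimal controls are cleanest when the control set is compact, and here one has to justify tightness (no mass escaping to $v=\infty$) of the minimizing sequence of Young measures and lower semicontinuity of the cost under weak-$*$ convergence when the support is noncompact. This is precisely where Assumptions~\ref{as21}, \ref{as2}, \ref{as1} do the work — they force $\tilde f_0(y,v)=|e^{Gv}y|^2\to\infty$ as $|v|\to\infty$ uniformly on compacts and control the Jacobians — but turning these pointwise growth statements into the coercivity/tightness needed for the direct method (and, separately, into the Lipschitz regularity of the optimal trajectory, which is the delicate part proved in \cite{GS2}) is the technical heart of the matter; the PMP-based regularity argument in part (b), by contrast, is comparatively routine once existence and the normalization of the adjoint are in hand.
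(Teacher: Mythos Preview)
The paper does not actually prove Theorem~\ref{exlr}: it explicitly defers the proof to the companion paper~\cite{GS2} (see the sentence preceding the theorem and the statement in Section~\ref{cap} that ``the proof of the main result in Subsection~\ref{SS33} can be found in \cite{GS2}''). So there is no in-paper argument to compare your proposal against.

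That said, your outline is consistent with the standard architecture for such results and with what the paper signals about the contents of~\cite{GS2}. The two-part split (direct-method existence for relaxed controls with unbounded control set, then PMP-based Lipschitz regularity in the normal case via superlinear growth of the Lagrangian relative to the dynamics) is exactly the template one expects, and you have correctly located the genuine difficulty in the tightness/coercivity step for Young measures when the control set is $\mathbb{R}^k$. One small point worth sharpening: in part~(b) your argument uses boundedness of the adjoint $\psi(\cdot)$ on $[0,T]$ as an input, but $\psi$ solves $\dot\psi=-\partial_y H$, and $\partial_y H$ involves $\partial_x(Ad(e^{Gv})f)$ and $\partial_x|e^{Gv}y|^2$; you should make explicit that Assumptions~\ref{as2} and~\ref{as1}(ii) are what guarantee these derivative terms stay controlled along the optimal trajectory, closing the Gronwall loop for $\psi$. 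Without that, boundedness of $\psi$ is assumed rather than proved, and the regularity argument is circular.
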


\begin{remark}
\label{R1}For generic boundary conditions the relaxed optimal trajectory
corresponds to a normal extremal. Therefore, it follows from inequality (\ref%
{lif}) that generic optimal relaxed trajectories are Lipschitz
continuous. Inequality (\ref{liJ}) together with Assumption
\ref{as2} imply that the functions $v^{j}$, $j=1,2,...,n+2$ on the
corresponding Gamkrelidze control are essentially bounded.$\square $
\end{remark}

\subsection{Approximation of relaxed minimizers by absolutely-continuous
controls}

\label{32sketch}

The rest of the proof of Theorem \ref{taf3/2} consists of two approximation
steps. In the first step we approximate the relaxed minimizer $\eta _{t}$ of
the reduced problem (\ref{raf1})-(\ref{rq})-(\ref{rep}) by a piecewise
constant control $w_{\varepsilon }(\cdot )$ in such a way that the
trajectory and the functional of (\ref{raf1}), driven by $w_{\varepsilon
}(\cdot )$, are $\varepsilon $-close to the trajectory and the functional of
(\ref{rel}). Such approximating controls can be chosen in such a way that
the number of discontinuities is bounded by $\frac{Const.}{\varepsilon }$.

The second step consists of approximating the piece-wise constant controls $%
w_{\varepsilon }(\cdot )$ by \textit{absolutely continuous} controls $%
v_{\varepsilon }(\cdot )$, whose derivative $u_{\varepsilon }(\cdot )=\dot{v}%
_{\varepsilon }(\cdot )$ becomes $\varepsilon $-minimizing control for the
original problem (\ref{j0t})-(\ref{af})-(\ref{ep}). This second approximants
can be obtained by altering the function $w_{\varepsilon }$ at intervals of
length $\varepsilon ^{2}$ containing the points of discontinuity.

Using this two-step approximation we are able to prove the following
proposition (see Subsection \ref{SS4b} below).

\begin{proposition}
\label{P5}Consider $\eta _{t}=\sum\limits_{j=1}^{n+2}p_{j}(t)\delta
_{v^{j}(t)}$, a Gamkrelidze minimizer for the reduced problem (\ref{raf1})-(%
\ref{rq})-(\ref{rep}), satisfying (\ref{lif}), (\ref{liJ}) for some $%
M<+\infty $, let $y_{\eta }$ denote the corresponding solution to the
dynamical equation (\ref{rel}).

There exists a family of absolutely continuous piecewise linear controls $%
v_\varepsilon$, $\varepsilon >0$ such that

\begin{itemize}
\item[i)] $J_r \left( v_\varepsilon \right) = J_r (\eta) + O(\varepsilon)$
when $\varepsilon \rightarrow 0^+ $; \smallskip

\item[ii)] The trajectories of the reduced system (\ref{raf1}) satisfy $%
\left\Vert y_{v_{\varepsilon }}-y_{\eta }\right\Vert _{L_{\infty
}[0,T]}=O(\varepsilon )$ when $\varepsilon \rightarrow 0^{+}$; \smallskip

\item[iii)] The trajectories of the original system (\ref{af}) corresponding
to the controls $u_{\varepsilon }=\frac{d}{dt}v_{\varepsilon }$ satisfy $%
x_{u_{\varepsilon }}(T)=x_{T}+O(\varepsilon )$ when $\varepsilon \rightarrow
0^{+}$;

\item[iv)] $\left\Vert \frac{d}{dt}v_{\varepsilon }\right\Vert
_{L_{2}[0,T]}^{2}=O\left( \frac{1}{\varepsilon ^{3}}\right) $ when $%
\varepsilon \rightarrow 0^{+}$. $\square $
\end{itemize}
\end{proposition}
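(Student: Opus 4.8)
The plan is to establish Proposition~\ref{P5} by making the two-step approximation outlined in Subsection~\ref{32sketch} quantitative, controlling at each step both the closeness of trajectories and functional, and the $L_2$-size of the derivative of the approximating control.

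\textbf{Step 1: Piecewise-constant approximation of the Gamkrelidze minimizer.} Starting from $\eta_t = \sum_{j=1}^{n+2} p_j(t)\delta_{v^j(t)}$, I would partition $[0,T]$ into $N=\lceil\mathrm{const}/\varepsilon\rceil$ subintervals and, on each subinterval, subdivide it further into $n+2$ pieces of relative lengths $p_j$ (averaged over the subinterval), assigning the constant value $v^j$ (or a suitable average of $v^j$) on the $j$-th piece. This produces a piecewise-constant $w_\varepsilon$ with at most $O(1/\varepsilon)$ discontinuities. The bounds \eqref{lif} and \eqref{liJ} give a uniform bound $M$ on the reduced drift $\bigl(Ad(e^{Gv^j})f\bigr)(y)$ and on the running cost along the optimal trajectory; combined with the Lipschitz dependence of the reduced dynamics on $y$ (on a compact neighborhood of $y_\eta([0,T])$) and a standard Gronwall estimate, one gets $\|y_{w_\varepsilon}-y_\eta\|_{L_\infty}=O(\varepsilon)$ and $J_r(w_\varepsilon)=J_r(\eta)+O(\varepsilon)$. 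The point is that a chattering control with $N$ cells approximates the relaxed dynamics with error $O(1/N)$; here $N\sim 1/\varepsilon$. Essential bounded\-ness of the $v^j$ (Remark~\ref{R1}) makes $\|w_\varepsilon\|_{L_\infty}$ uniformly bounded.

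\textbf{Step 2: Smoothing across the jumps.} Near each of the $O(1/\varepsilon)$ discontinuity points of $w_\varepsilon$ I would replace the jump by an affine ramp over an interval of length $\varepsilon^2$, obtaining an absolutely continuous, piecewise-linear $v_\varepsilon$. On each such ramp the derivative $u_\varepsilon=\dot v_\varepsilon$ has magnitude $O(\|w_\varepsilon\|_\infty/\varepsilon^2)=O(1/\varepsilon^2)$, so its $L_2$-norm-squared on one ramp is $O\bigl((1/\varepsilon^2)^2\cdot\varepsilon^2\bigr)=O(1/\varepsilon^2)$; summing over the $O(1/\varepsilon)$ ramps gives $\|u_\varepsilon\|_{L_2}^2=O(1/\varepsilon^3)$, which is assertion (iv). Since $\sum$ of ramp lengths is $O(1/\varepsilon)\cdot\varepsilon^2=O(\varepsilon)$, the control $v_\varepsilon$ differs from $w_\varepsilon$ on a set of measure $O(\varepsilon)$ by a uniformly bounded amount, so $\|y_{v_\varepsilon}-y_{w_\varepsilon}\|_{L_\infty}=O(\varepsilon)$ and $J_r(v_\varepsilon)=J_r(w_\varepsilon)+O(\varepsilon)$ by the same Gronwall-type argument; combined with Step~1 this yields (i) and (ii). For (iii), Proposition~\ref{p reduction} gives $x_{u_\varepsilon}(T)=e^{G\phi u_\varepsilon(T)}y_{\phi u_\varepsilon}(T)=e^{Gv_\varepsilon(T)}y_{v_\varepsilon}(T)$; choosing the ramps so that $v_\varepsilon(T)=V$ (the terminal value appearing in \eqref{rep}) and using (ii) together with continuity of $(y,v)\mapsto e^{Gv}y$ gives $x_{u_\varepsilon}(T)=e^{GV}x_T+O(\varepsilon)=x_T+O(\varepsilon)$.

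\textbf{Main obstacle.} The delicate point is the bookkeeping in Step~1: one must approximate simultaneously the \emph{trajectory} and the \emph{functional} of the relaxed system by a chattering control whose number of cells is tied to $\varepsilon$, and verify that the accumulated error over all cells is genuinely $O(\varepsilon)$ rather than, say, $O(\sqrt\varepsilon)$ — this requires that the local (one-cell) discrepancy between the relaxed vector field $\langle\eta_t,Ad(e^{Gv})f\rangle$ and its chattering realization be $O(\varepsilon^2)$ per cell, which in turn rests on the uniform bounds \eqref{lif}–\eqref{liJ} and the smoothness of $v\mapsto Ad(e^{Gv})f$ on the relevant compact set. A second subtlety is ensuring that all the trajectories $y_{w_\varepsilon}, y_{v_\varepsilon}$ stay in a fixed compact neighborhood of $y_\eta([0,T])$ for small $\varepsilon$, so that the Lipschitz and $C^2$ bounds used in the Gronwall estimates are uniform; this follows from the $O(\varepsilon)$ closeness itself by a continuation/bootstrap argument, but must be stated carefully. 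Everything else is routine; the full details are deferred to Subsection~\ref{SS4b}.
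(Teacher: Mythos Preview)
Your two-step scheme is exactly the one the paper carries out in Subsection~\ref{SS4b}: a Gamkrelidze chattering approximation with $N\sim 1/\varepsilon$ cells (Lemma~\ref{L3}) followed by linear interpolation across the jumps on intervals of length $\sim\varepsilon^2$ (Lemma~\ref{L4}), with the cost handled by augmenting the state with the running integral. Your arithmetic for~(iv) is correct and matches the paper's.

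Two points need correction. First, your argument for~(iii) contains a genuine slip: with $v_\varepsilon(T)=V$ and $y_{v_\varepsilon}(T)\approx y_\eta(T)=e^{GV}x_T$ you get $x_{u_\varepsilon}(T)\approx e^{GV}e^{GV}x_T=e^{2GV}x_T$, not $x_T$; the claim ``$e^{GV}x_T+O(\varepsilon)=x_T+O(\varepsilon)$'' is false for $V\neq 0$. You must choose $v_\varepsilon(T)=-V$ (equivalently, append a final ramp taking $v_\varepsilon$ to the value for which $e^{Gv_\varepsilon(T)}y_\eta(T)=x_T$); the paper does precisely this adjustment at the end of the proof. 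Second, your diagnosis of the ``main obstacle'' is slightly off the mark. The $O(1/N)$ error in Step~1 does not come from a naive $O(\varepsilon^2)$-per-cell bound on $|F(x,w_\varepsilon)-\langle\eta_t,F(x,\cdot)\rangle|$ (which would fail), but from cancellation in the \emph{relaxation metric} of Lemma~\ref{P11}: the chattering control is built so that $\int_{t_{i-1}}^{t_i}F(x_\eta(t_{i-1}),w_\varepsilon)\,dt$ matches $\int_{t_{i-1}}^{t_i}\langle\eta_t,F(x_\eta(t_{i-1}),\cdot)\rangle\,dt$ exactly on each full cell, and only the Lipschitz corrections $F(x_\eta(t),\cdot)-F(x_\eta(t_{i-1}),\cdot)$ accumulate. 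You implicitly rely on this when you invoke the ``standard'' $O(1/N)$ chattering estimate, but your stated concern about smoothness of $v\mapsto Ad(e^{Gv})f$ is not what drives it.
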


The proof of Theorem \ref{taf3/2} follows by noting that
\begin{equation}
\sigma _{T}\leq \lim\limits_{\varepsilon \rightarrow 0^{+}}\frac{\ln
\left( Const.\left\Vert u_{\varepsilon }\right\Vert
_{L_{2}[0,T]}\right) }{\ln \frac{1}{\varepsilon }},  \label{nv}
\end{equation}%
where $u_{\varepsilon }=\frac{d}{dt}v_{\varepsilon }$ is the family
of controls described in Proposition \ref{P5}. This yields the
estimate $\sigma _{T}\leq \lim\limits_{\varepsilon \rightarrow
0^{+}}\frac{Const+\ln \varepsilon ^{-\frac{3}{2}}}{\ln
\frac{1}{\varepsilon }}=\frac{3}{2}.$

\section{Degree of singularity for input-commutative control-affine system:
conjecture and example}\label{S8}

In the previous Section we provide an upper bound for the degree of
singularity by showing how to construct a minimizing sequence with
asymptotics $\sigma_T = \frac 3 2$. However we believe that this
upper bound is not sharp and we provide the following conjecture for
a sharp estimate:

\begin{conjecture}
\label{conjex}Under the assumptions of Theorem~\ref{taf3/2}
\begin{equation*}
\sigma _{T}\leq 1.\ \Box
\end{equation*}
\end{conjecture}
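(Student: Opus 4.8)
The plan is to sharpen the second approximation step in the proof of Theorem~\ref{taf3/2} by replacing the crude ``cut-and-interpolate'' patch on intervals of length $\varepsilon^2$ by a more economical construction. Recall that in the proof of Proposition~\ref{P5} the piecewise-constant control $w_\varepsilon$ has $O(1/\varepsilon)$ jumps, and each jump of size $O(1)$ is resolved by forcing $v_\varepsilon$ to traverse that jump on a short interval; if the interval has length $\ell$, the contribution to $\|\dot v_\varepsilon\|_{L_2}^2$ from that interval is $O(1/\ell)$, so the total is $O\bigl(\varepsilon^{-1}\cdot\ell^{-1}\bigr)$. Choosing $\ell=\varepsilon^2$ gives the bound $\varepsilon^{-3}$ recorded in Proposition~\ref{P5}(iv); choosing $\ell=\varepsilon$ would already give $\varepsilon^{-2}$ and hence $\sigma_T\le 1$. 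So the whole conjecture reduces to showing that one can afford $\ell=\varepsilon$ (up to logarithmic factors) \emph{without} destroying the estimates (i)--(iii) of Proposition~\ref{P5}.

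The key steps, in order, would be: \emph{(1)} Redo the first approximation step keeping track of the modulus of continuity of the relaxed optimal trajectory $y_\eta$; by Remark~\ref{R1} and the Lipschitz regularity in Theorem~\ref{exlr}, $y_\eta$ is Lipschitz, so altering $w_\varepsilon$ on a set of total measure $O(\varepsilon)$ perturbs $y_{v_\varepsilon}$ by $O(\varepsilon)$ in $L_\infty$ --- this is already what gives (ii). \emph{(2)} Estimate the effect of the longer interpolation intervals on the cost $J_r$: on an interpolation interval of length $\ell$ the integrand $\bigl(e^{Gv}y\bigr)'P\bigl(e^{Gv}y\bigr)$ is bounded by $M$ at the endpoints but may grow along the interpolating path; here Assumption~\ref{as21} (properness of $e^{Gv}$) and Assumption~\ref{as2} must be used to control the excursion, and one needs the \emph{total} excess cost $\sum_{\text{jumps}} \int_{\text{interval}} (\cdots)\,dt = O(\varepsilon^{-1}\cdot \ell\cdot \sup)$ to be $o(1)$. \emph{(3)} Estimate the endpoint error in (iii): the map $v\mapsto x_{\dot v}(T)$ depends on the trajectory $y_{v}$ and on $e^{G\phi\dot v(T)}$, and since $v_\varepsilon(T)=w_\varepsilon(T)$ by construction the terminal value $V$ is unchanged, so (iii) follows from (ii) and continuity of the input-to-trajectory map, exactly as before. \emph{(4)} Optimize $\ell=\ell(\varepsilon)$: one wants $\ell$ as large as possible (to make (iv) small) while keeping the cost excess in step (2) negligible; I expect the optimal choice to be $\ell\sim\varepsilon$ or $\ell\sim\varepsilon\ln\frac1\varepsilon$, either of which, plugged into (iv) and then into \eqref{nv}, yields $\sigma_T\le 1$.

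The hard part will be step (2): with an \emph{unbounded} control set, the interpolating arc between two consecutive plateau values $v^{j}(t^-)$ and $v^{j}(t^+)$ of $w_\varepsilon$ can be long (the jumps are bounded only because of \eqref{liJ} and Assumption~\ref{as2}, not a priori), and along that arc the reduced drift $\bigl(Ad(e^{Gv})f\bigr)(y)$ and the running cost $\bigl(e^{Gv}y\bigr)'P\bigl(e^{Gv}y\bigr)$ need not stay bounded by $M$. One must therefore either choose the interpolation path to stay in a region where Assumptions~\ref{as2} and~\ref{as1} give usable bounds, or else accept a larger local cost on each interpolation interval and show that, because there are only $O(1/\varepsilon)$ of them and each has length $O(\varepsilon)$, the aggregate still tends to zero. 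Controlling this trade-off quantitatively --- essentially a refined version of Proposition~\ref{P5} with the exponent $3$ in (iv) replaced by $2$ --- is exactly the technical content that would upgrade the conjecture to a theorem; everything else is a bookkeeping repetition of the argument already sketched in Subsections~\ref{S31}--\ref{32sketch}.
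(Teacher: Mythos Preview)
This statement is a \emph{conjecture}, and the paper does not prove it; it only offers a heuristic route (Section~\ref{S8}). So your proposal must be judged against that heuristic, not against a proof.

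Your plan and the paper's plan attack the same bottleneck (the exponent $3$ in Proposition~\ref{P5}(iv)) from opposite ends. The paper proposes to sharpen the \emph{first} approximation step: keep $O(\varepsilon^{-1})$ plateaus in $w_\varepsilon$ but arrange, via higher-order chattering, that the \emph{endpoint} of the reduced trajectory and the value of $J_r$ are $\varepsilon^{2}$-close (not just $\varepsilon$-close) to those of the relaxed minimizer; then interpolate on intervals of length $\varepsilon^{3}$, giving $\Vert\dot v_\varepsilon\Vert_{L_2}^2=O(\varepsilon^{-1}\cdot\varepsilon^{-3})=O(\varepsilon^{-4})$, hence $\Vert\dot v_\varepsilon\Vert_{L_2}=O(\varepsilon^{-2})$ and $\sigma_T\le 1$. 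You instead propose to leave the first step alone and enlarge the interpolation intervals in the \emph{second} step to $\ell=\varepsilon$.

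There is a genuine arithmetic obstruction in your route. With $N=O(\varepsilon^{-1})$ jumps and $\ell=\varepsilon$, the total measure of the interpolation set is $N\ell=O(1)$, not $O(\varepsilon)$ as you write in step~(1). Lemma~\ref{L4} gives $\Vert x_{v_\varepsilon}-x_{w_\varepsilon}\Vert_{L_\infty}\le CN\ell=O(1)$, so (ii) fails outright; and your own estimate in step~(2), $O(\varepsilon^{-1}\cdot\ell\cdot\sup)=O(\sup)$, does not vanish since the integrand along the interpolating arcs is only bounded, not small. In short, once the interpolation set fills a positive fraction of $[0,T]$, $v_\varepsilon$ is no longer close to $w_\varepsilon$ in any metric that controls the reduced dynamics, and the Lipschitz continuity of $y_\eta$ gives you nothing. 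To rescue your direction you would need a \emph{cancellation} mechanism showing that the $O(1/\varepsilon)$ local errors, each of size $O(\varepsilon)$, sum telescopically or average out in the relaxation metric~\eqref{indif}; you do not supply one, and it is not clear one exists for the second step alone. The paper's heuristic sidesteps this by putting the cancellation where it is known to occur---in the Gamkrelidze chattering approximation of step one---rather than in the linear interpolation of step two.
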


Our conjecture relies on the proof of Proposition~\ref{P5}, which is key
fragment of the proof of Theorem~\ref{taf3/2}. We trust that our two-step
approximation procedure can be improved: there exists a piecewise continuous
control $w_{\varepsilon }(\cdot )$ with $\leq O(\varepsilon ^{-1})$
intervals of continuity, such that the \textit{end-point} of the trajectory
and the value of the functional of (\ref{rq}) driven by $w_{\varepsilon
}(\cdot )$ are \textit{$\varepsilon ^{2}$-close} to the \textit{end-point}
of the trajectory and the value of the functional of (\ref{relJr}).

If this holds true then, by modifying this approximant in intervals
of length $\varepsilon ^{3}$ instead of $\varepsilon ^{2}$ we obtain
a family of square-integrable controls $u_{\varepsilon
}=\frac{dv_{\varepsilon }}{dt}$ satisfying the estimate $\Vert
u_{\varepsilon }\Vert _{L_{2}}=O(\varepsilon ^{-2})$. Then, by
virtue of majoration (\ref{nv}) we conclude that $\sigma _{T}\leq
1$.

Another possibility for sharpening the upper estimate of degree of
singularity is related to the second approximation step described in the
previous subsection. This step can be formalized as the following problem of
best approximation.

\begin{problem}
Let $B_{M}=\{u:\Vert u\Vert _{L_{2}[0,T]}\leq M\}$ denote the ball of radius
$M$ in the space of square-integrable functions. Given a
piecewise-continuous (or just essentially bounded) function $\varphi
:[0,T]\mapsto \mathbb{R}$, find the asymptotics (the rate of decay) of the
distance
\begin{equation*}
\rho _{L_{p}}(\varphi ,B_{M})=\inf \left\{ \left\Vert \varphi -\phi
u\right\Vert _{L_{p}[0,T]}:u\in B_{M}\right\} ,
\end{equation*}%
as $M\rightarrow +\infty $ (for fixed $p\in \mathbb{N}$).\ $\Box $
\end{problem}

The following example shows that, at least in some cases, the bound
$\sigma _{T}\leq 1$ is tight.

\begin{example}
\label{ex1} Consider optimal control-affine problem
\begin{eqnarray*}
&&\dot{x}=f(x)+g^{1}(x)u_{1},\qquad x=(x_{1},x_{2},x_{3}), \\
&&f=x_{1}\frac{\partial }{\partial x_{2}}+\gamma (x_{1})(x_{1}^{2}-1)\frac{%
\partial }{\partial x_{3}},\qquad g_{1}=\frac{\partial }{\partial x_{1}} \\
&&J_{0}^{1}=\int_{0}^{1}(x_{1}^{2}+x_{2}^{2}+x_{3}^{2})dt\rightarrow \min ,
\\
&&x(0)=0,\qquad x(1)=0,
\end{eqnarray*}%
where $\gamma (x)$ is a smooth function supported at $[-2,2]\subset \mathbb{R%
}$, $0\leq \gamma (x)\leq 1$ and $\gamma (x)\equiv 1$ on $[-3/2,3/2].\ \Box $
\end{example}

In coordinates the dynamics of the problem is
\begin{equation}
\dot{x}_{1}=u_{1},\qquad \dot{x}_{2}=x_{1}\qquad \dot{x}_{3}=\gamma
(x_{1})(x_{1}^{2}-1).  \label{dyex1}
\end{equation}

To estimate the infimum of this problem note that
\begin{equation*}
0=\int_{0}^{1}\gamma (x_{1}\left( t\right) )\left( x_{1}\left( t\right)
^{2}-1\right) \,dt\leq \int_{0}^{1}x_{1}(t)^{2}\,dt-1,
\end{equation*}%
and hence $J_{0}^{1}\geq 1$. Now we construct a minimizing sequence
of controls $u_{N}(\cdot )$, such that $J_{0}^{1}(u_{N})\rightarrow
1$ as $ N\rightarrow +\infty $.

First take the indicator function $p(t)$ of the interval $[0,1]$ and
construct a piecewise-constant function
\begin{equation*}
q_{N}(t)=\sum_{j=0}^{2N-1}(-1)^{j}p(2Nt-j);
\end{equation*}%
$N$ being a large integer. Its intervals of constancy have lengths equal to $%
(2N)^{-1}$.

Then, alter the function $q_{N}$ on the subintervals $[0,N^{-3}]$,
\linebreak $\lbrack j(2N)^{-1}-N^{-3},j(2N)^{-1}+N^{-3}],\quad j=1,\ldots
,2N-1\ $and $[1-N^{-3},1]$, transforming it into a piecewise-linear
continuous function $q_{N}^{c}(t)$ with boundary values: $%
q_{N}^{c}(0)=q_{N}^{c}(T)=0$.

Taking $x_{1}(t)=q_{N}^{c}(t)$ and substituting it into second and third
equations of (\ref{dyex1}), we conclude that the corresponding solution
satisfies the conditions $x_{2}(1)=0,\ x_{3}(1)=O(N^{-2})$, as $N\rightarrow
+\infty $. Besides
\begin{equation*}
\int_{0}^{1}(x_{1}^{2}+x_{2}^{2}+x_{3}^{2})dt-1=O(N^{-2}),\qquad \mathrm{as}%
~N\rightarrow +\infty .
\end{equation*}%
The $L_{2}$-norm of the corresponding control $\Vert u_{1}(t)\Vert
_{L_{2}}=\Vert \dot{q}_{N}^{c}(t)\Vert _{L_{2}}$ admits an estimate
\begin{equation*}
\Vert u_{1}(t)\Vert _{L_{2}}\simeq ((N^{3})^{2}N^{-3}N)^{1/2}=N^{2}
\end{equation*}%
as $N\rightarrow +\infty $. Therefore the order of singularity satisfies $%
\sigma _{1}\leq 1.\ \Box $

\section{Proofs\label{S4}}

\subsection{Proof of Proposition \protect\ref{P7}\label{SS4a}}

In order to prove Proposition \ref{P7}, we will use the following variant of
Tchebyshev inequality:

\begin{lemma}
\label{L2}Let $u\in L_{2}$ and $\lambda $ denote Lebesgue measure.
Then
\begin{equation}  \label{tchin}
\left\Vert u \right\Vert _{L_{2}}<\eta \Rightarrow \forall
\varepsilon >0: \lambda \left\{ x:\left\vert u\left( x\right)
 \right\vert \geq \varepsilon \right\} <\frac{\eta
^{2}}{\varepsilon ^{2}}.\square
\end{equation}
\end{lemma}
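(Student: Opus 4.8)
The plan is to reduce the statement to the classical Chebyshev--Markov argument applied to the function $|u|^2$. First I would fix $u\in L_2$ with $\|u\|_{L_2}<\eta$ and an arbitrary $\varepsilon>0$, and introduce the superlevel set $E_\varepsilon=\{x:|u(x)|\geq\varepsilon\}$, which is measurable because $u$ is. By the very definition of $E_\varepsilon$ one has the pointwise bound $|u(x)|^2\geq\varepsilon^2$ for every $x\in E_\varepsilon$.

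Next I would estimate the square of the $L_2$-norm from below by restricting the integral to $E_\varepsilon$:
\[
\eta^2>\|u\|_{L_2}^2=\int|u|^2\,d\lambda\geq\int_{E_\varepsilon}|u|^2\,d\lambda\geq\varepsilon^2\,\lambda(E_\varepsilon).
\]
Dividing through by $\varepsilon^2>0$ gives $\lambda(E_\varepsilon)<\eta^2/\varepsilon^2$, which is exactly \eqref{tchin}.

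There is essentially no obstacle here: the only points worth a word are that measurability of $E_\varepsilon$ is immediate from measurability of $u$, and that the strict inequality in the conclusion is inherited from the first (strict) step of the above chain, all subsequent inequalities being non-strict. The lemma is recorded at this stage only because it will be invoked repeatedly in the substantially more delicate proof of Proposition~\ref{P7}.
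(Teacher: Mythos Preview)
Your proof is correct and is exactly the standard Chebyshev--Markov argument; the paper itself does not supply a proof of this lemma, treating it as a well-known fact and simply stating it before using it in the proof of Proposition~\ref{P7}.
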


\smallskip

It suffices to prove Proposition \ref{P7} for distributions $v=\delta
^{(m-1)}$.

Notice that for any $p \geq m$:
\begin{equation*}
\phi ^{p}\delta ^{(m-1)}\left( t\right) =\frac{t^{p-m}}{\left(
p-m\right) !} ,\qquad a.e.~t\in \left[ 0,T\right] .
\end{equation*}
Fix $\eta >0$, and consider $u\in L_{2}\left[ 0,T\right] $ such that
$ \left\Vert u-v\right\Vert _{H_{-p}\left[ 0,T\right] }<\eta $. It
follows from (\ref{tchin}) that $\lambda \left\{ t\in \left[
0,T\right] :\left\vert \phi ^{p}u\left( t\right)
-\frac{t^{p-m}}{\left( p-m\right) !}\right\vert \geq \varepsilon
\right\} <\frac{\eta ^{2}}{\varepsilon ^{2}}$. Then, for every
$\theta >0$ sufficiently small and provided $\eta >0$ is small,
there
exist $\theta _{0,i}\in \left] i\theta ,i\theta +\frac{\eta ^{2}}{%
\varepsilon ^{2}}\right[ $, $i=1,2,...,2^{p-m}$ such that $\left\vert \phi
^{p}u\left( \theta _{0,i}\right) -\frac{\theta _{0,i}^{p-m}}{(p-m)!}%
\right\vert <\varepsilon $.

By the mean-value theorem, there exist
\begin{equation*}
\theta _{1,i}\in \left] \left( 2i-1\right) \theta ,2i\theta +\frac{\eta ^{2}%
}{\varepsilon ^{2}}\right[, \ i=1,2,...,2^{p-m-1}
\end{equation*}
such that
\begin{eqnarray*}
&& \hspace{-0.5cm} \left\vert \phi ^{p-1}u\left( \theta
_{1,i}\right) -\frac{\theta
_{1,i}^{p-m-1}}{(p-m-1)!}\right\vert = \\
&& =\frac{\left\vert \phi ^{p}u\left( \theta _{0,2i}\right)
-\frac{\theta
_{0,2i}^{p-m}}{(p-m)!}-\left( \phi ^{p}u\left( \theta _{0,2i-1}\right) -%
\frac{\theta _{0,2i-1}^{p-m}}{(p-m)!}\right) \right\vert }{\theta
_{0,2i}-\theta _{0,2i-1}} \leq \frac{2\varepsilon }{\theta
-\frac{\eta ^{2}}{ \varepsilon ^{2}}}.
\end{eqnarray*}
Proceeding by induction we establish existence of $\theta _{p-m}\in \left]
\theta ,2^{p-m}\theta +\frac{\eta ^{2}}{\varepsilon ^{2}}\right[ $ such that
\begin{equation*}
\left\vert \phi ^{m}u\left( \theta _{p-m}\right) -1\right\vert
<\frac{ 2^{p-m}\varepsilon }{\left( \theta -\frac{\eta
^{2}}{\varepsilon ^{2}} \right) ^{p-m}}.
\end{equation*}
This implies
\begin{equation*}
\phi ^{m}u\left( \theta _{p-m}\right) >1-\frac{2^{p-m}\varepsilon }{\left(
\theta -\frac{\eta ^{2}}{\varepsilon ^{2}}\right) ^{p-m}}.
\end{equation*}
Once again, mean-value theorem guarantees the existence of $\theta
_{p-m+1}\in \left] 0,2^{p-m}\theta +\frac{\eta ^{2}}{\varepsilon
^{2}}\right[ $ such that
\begin{equation*}
\phi ^{m-1}u\left( \theta _{p-m+1}\right) =\frac{\phi ^{m}u\left( \theta
_{p-m}\right) }{\theta _{p-m}} \geq \frac{1}{2^{p-m}\theta +\frac{\eta ^{2}}{
\varepsilon ^{2}}}\left( 1-\frac{2^{p-m}\varepsilon }{\left( \theta -\frac{
\eta ^{2}}{\varepsilon ^{2}}\right) ^{p-m}}\right) .
\end{equation*}
Repeating the same argument, one proves existence of $\theta
_{p-1}\in \left] 0,2^{p-m}\theta +\frac{\eta ^{2}}{\varepsilon
^{2}}\right[ $ such that
\begin{equation*}
\phi u\left( \theta _{p-1}\right) \geq \frac{1}{\left( 2^{p-m}\theta +\frac{%
\eta ^{2}}{\varepsilon ^{2}}\right) ^{m-1}}\left( 1-\frac{2^{p-m}\varepsilon
}{\left( \theta -\frac{\eta ^{2}}{\varepsilon ^{2}}\right) ^{p-m}}\right) .
\end{equation*}%
Applying Schwarz's inequality, we conclude
\begin{eqnarray*}
\frac{1}{\left( 2^{p-m}\theta +\frac{\eta ^{2}}{\varepsilon ^{2}}\right)
^{m-1}}\left( 1-\frac{2^{p-m}\varepsilon }{\left( \theta -\frac{\eta ^{2}}{%
\varepsilon ^{2}}\right) ^{p-m}}\right) \leq \\
\leq \sqrt{\theta _{p-1}}\sqrt{\int_{0}^{\theta _{p-1}}u\left( \tau \right)
^{2}~d\tau }<\sqrt{2^{p-m}\theta +\frac{\eta ^{2}}{\varepsilon ^{2}}}%
\left\Vert u\right\Vert _{L_{2}\left[ 0,T\right] },
\end{eqnarray*}%
and
\begin{equation}
\left\Vert u\right\Vert _{L_{2}\left[ 0,T\right] }\geq \frac{1}{\left(
2^{p-m}\theta +\frac{\eta ^{2}}{\varepsilon ^{2}}\right) ^{m-\frac{1}{2}}}%
\left( 1-\frac{2^{p-m}\varepsilon }{\left( \theta -\frac{\eta ^{2}}{%
\varepsilon ^{2}}\right) ^{p-m}}\right) .  \label{Z4}
\end{equation}%
Now, for $\varepsilon =\eta ^{\frac{2(p-m)}{2(p-m)+1}}$, $\theta
=2^{p-m+1}\eta ^{\frac{2}{2(p-m)+1}}$, inequality (\ref{Z4}) reduces to
\begin{equation*}
\left\Vert u\right\Vert _{L_{2}\left[ 0,T\right] }\geq \frac{1}{\left(
2^{p-m+2}+1\right) ^{m-\frac{1}{2}}\eta ^{\frac{2m-1}{2(p-m)+1}}}.
\end{equation*}%
This proves existence of a constant $C>0$ such that
\begin{equation*}
\frac{\inf \left\{ \log \left\Vert u\right\Vert _{L_{2}\left[ 0,T\right]
}:\left\Vert u-v\right\Vert _{H_{-p}\left[ 0,T\right] } < \eta \right\} }{%
\log \frac{1}{\eta }} > \frac{\frac{2m-1}{2(p-m)+1}\log \frac{1}{\eta }+\log
C}{\log \frac{1}{\eta }},
\end{equation*}%
for all sufficiently small $\eta >0$. Hence
\begin{equation*}
\lim\limits_{\eta \rightarrow 0^{+}}\frac{\inf \left\{ \log \left\Vert
u\right\Vert _{L_{2}\left[ 0,T\right] }:\left\Vert u-v\right\Vert _{H_{-p}%
\left[ 0,T\right] }<\eta \right\} }{\log \frac{1}{\eta }}\geq \frac{2m-1}{%
2(p-m)+1}.
\end{equation*}

To prove the converse inequality, we consider piecewise polynomial functions $%
\psi _{\eta }:[0,T]\mapsto \mathbb{R}$:
\begin{equation*}
\psi _{\eta }(t)=\left\{
\begin{array}{ll}
\sum\limits_{i=0}^{p-1}\alpha _{i}\frac{t^{p+i}}{(p+i)!\eta ^{m+i}}\qquad &
if\ t\in \lbrack 0,\eta ];\medskip \\
\frac{t^{p-m}}{(p-m)!} & if\ t>\eta ,%
\end{array}%
\right.
\end{equation*}%
and make unique choice of constants $\alpha _{0},\alpha _{1},...,\alpha
_{p-1}\in \mathbb{R}$ in such a way that $\psi _{\eta }$ becomes $(p-1)$%
-times differentiable with absolutely continuous $(p-1)^{th}$ derivative.
One can check that $\alpha =\left( \alpha _{0},\alpha _{1},...,\alpha
_{p-1}\right) $ is the unique solution of the linear system $M\alpha =b$,
where
\begin{equation*}
M=\left(
\begin{array}{ccccc}
\frac{1}{p!} & \frac{1}{(p+1)!} & \frac{1}{(p+2)!} & \cdots & \frac{1}{%
(2p-1)!}\smallskip \\
\frac{1}{(p-1)!} & \frac{1}{p!} & \frac{1}{(p+1)!} & \cdots & \frac{1}{%
(2p-2)!}\smallskip \\
\frac{1}{(p-2)!} & \frac{1}{(p-1)!} & \frac{1}{p!} & \cdots & \frac{1}{%
(2p-3)!}\smallskip \\
\vdots & \vdots & \vdots & \ddots & \vdots \smallskip \\
1 & \frac{1}{2!} & \frac{1}{3!} & \cdots & \frac{1}{p!}%
\end{array}%
\right),
\end{equation*}%
\begin{equation*}
b^{\prime}=\left( \frac{1}{(p-m)!} \ \ \frac{1}{(p-m-1)!} \ \cdots \ 1 \ \ 0
\ \cdots \ 0 \right) .
\end{equation*}
It follows that $\alpha $ does not depend on $\eta $.

Let $u=\frac{d^{p}\psi _{\eta }}{dt^{p}}$. Then,%
\begin{eqnarray*}
\left\Vert u\right\Vert _{L_{2}[0,T]}^{2} = \int_{0}^{\eta }\left(
\sum\limits_{i=0}^{p-1}\alpha _{i}\frac{t^{i}}{i!\eta ^{m+i}}\right)
^{2}\,dt=\frac{C_{1}}{\eta ^{2m-1}}, \\
\left\Vert u-v\right\Vert _{H_{-p}[0,T]}^{2} = \int_{0}^{\eta
}\!\!\left(
\sum\limits_{i=0}^{p-1}\alpha _{i}\frac{t^{p+i}}{(p+i)!\eta ^{m+i}}-\frac{%
t^{p-m}}{(p-m)!}\right) ^{2}\!\!\!\!dt= C_{2}\eta ^{2(p-m)+1},
\end{eqnarray*}%
where $C_{1},\ C_{2}$ are positive constants. This shows that there exists a
constant $C$ such that
\begin{equation*}
\begin{array}{l}
\frac{\inf \left\{ \log \left\Vert u\right\Vert _{L_{2}[0,T]}:\left\Vert
u-v\right\Vert _{H_{-p}[0,T]}<\eta \right\} }{\log \frac{1}{\eta }} \leq
\frac{\frac{2m-1}{2(p-m)+1}\log \frac{1}{\eta }+C}{\log \frac{1}{\eta }}.%
\end{array}%
\end{equation*}

\subsection{Degree of singularity for noncommutative driftless case (proof of Theorem \ref{Thdriftless})\label{dlproof}}

In the following, we consider the cost functional
\begin{equation*}
J^T\left( u\right) =\int_{0}^{T}x^{\prime }Px~dt,
\end{equation*}%
to be minimized along the trajectories of the system

\begin{equation}
\dot{x}=\sum_{i=1}^{k}g_{i}\left( x\right) u_{i}\qquad x\left( 0\right)
=x_{0}.  \label{Z1}
\end{equation}%
Let $\mathcal{A}_{x_{0}}$ denote the set of points $x\in \mathbb{R}^{n}$
which can be reached from $x_{0}$ trough trajectories of $\left( \ref{Z1}%
\right) $.

Assertion i) of Theorem \ref{Thdriftless} is obvious. Hence we only
need to prove assertions ii) and iii). We start with assertion iii).

\begin{proposition}
If $x_{T}\in \mathcal{A}_{x_{0}}$ and the quadratic form $x\mapsto x^{\prime
}Px$ admits a minimum in $\mathcal{A}_{x_{0}}$, then $\sigma _{T}\leq \frac{1}{2%
}$. $\square $
\end{proposition}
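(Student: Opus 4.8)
The plan is to exhibit, for every small $\varepsilon>0$, an admissible control $u_\varepsilon$ that steers $x_0$ \emph{exactly} to $x_T$, satisfies $J^T(u_\varepsilon)\le\inf J^T+\varepsilon$, and obeys $\|u_\varepsilon\|_{L_2[0,T]}=O(\varepsilon^{-1/2})$. Substituting such a family into Definition~\ref{degsing} then gives $\sigma_T\le\tfrac12$ at once.

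First I would record the geometric fact underlying the construction: since the system is driftless it is symmetric, so $x_0$, $x_T$ and a minimizer $x^{\ast}\in\mathcal A_{x_0}$ of $y\mapsto y'Py$ all lie in one and the same orbit. Hence there exist bounded (say, piecewise-constant) controls $u^{(1)}$ on $[0,\tau_1]$ steering $x_0$ to $x^{\ast}$, and $u^{(2)}$ on $[0,\tau_2]$ steering $x^{\ast}$ to $x_T$, with corresponding trajectories $\xi^{(1)},\xi^{(2)}$. Because the dynamics are autonomous and homogeneous of degree one in $u$, time rescaling is free: for any $\delta>0$ the control $t\mapsto\frac{\tau_1}{\delta}u^{(1)}\!\big(\frac{\tau_1}{\delta}t\big)$ on $[0,\delta]$ traces the same curve $\xi^{(1)}([0,\tau_1])$ and carries $x_0$ to $x^{\ast}$ in time $\delta$, and a one-line change of variables gives $\big\|\tfrac{\tau_1}{\delta}u^{(1)}(\tfrac{\tau_1}{\delta}\cdot)\big\|_{L_2[0,\delta]}^2=\frac{\tau_1}{\delta}\|u^{(1)}\|_{L_2[0,\tau_1]}^2$; similarly for $u^{(2)}$.

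Next I would assemble $u_\varepsilon$ as a three-piece concatenation: the rescaled $u^{(1)}$ on $[0,\delta]$, then $u\equiv0$ on $[\delta,T-\delta]$ (so the trajectory rests at $x^{\ast}$), then the rescaled $u^{(2)}$ on $[T-\delta,T]$; by construction $x_{u_\varepsilon}(T)=x_T$. Its cost splits accordingly: the middle piece contributes exactly $(T-2\delta)\alpha$, while each end piece contributes at most $M_i\delta$, where $M_i=\max\{y'Py:y\in\xi^{(i)}([0,\tau_i])\}<\infty$ is a constant independent of $\delta$. Hence $J^T(u_\varepsilon)\le\alpha T+(M_1+M_2-2\alpha)\delta$. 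Since $y'Py\ge\alpha$ on the whole orbit one has $\inf J^T=\alpha T$ (assertion i) of Theorem~\ref{Thdriftless}), so $J^T(u_\varepsilon)-\inf J^T\le C_0\delta$ with $C_0:=M_1+M_2-2\alpha\ge0$. At the same time the rescaling estimate gives $\|u_\varepsilon\|_{L_2[0,T]}^2=\big(\tau_1\|u^{(1)}\|_{L_2[0,\tau_1]}^2+\tau_2\|u^{(2)}\|_{L_2[0,\tau_2]}^2\big)\delta^{-1}=C_1\delta^{-1}$.

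Finally, choosing $\delta=\varepsilon/(C_0+1)$ makes $u_\varepsilon$ admissible for the inner infimum in Definition~\ref{degsing} — it is square-integrable, $\varepsilon$-optimal, and hits $x_T$ exactly — with $\|u_\varepsilon\|_{L_2}=\sqrt{C_1(C_0+1)}\,\varepsilon^{-1/2}$. Thus that inner infimum is $\le\tfrac12\ln\tfrac1\varepsilon+\mathrm{const}$, and dividing by $\ln\tfrac1\varepsilon$ and letting $\varepsilon\to0^+$ yields $\sigma_T\le\tfrac12$. The only point needing care is the first one — that $x_0$, $x^{\ast}$, $x_T$ are mutually reachable by honest bounded controls — which rests on the orbit/symmetry structure of driftless systems; the rest is the homogeneity rescaling together with the elementary cost split.
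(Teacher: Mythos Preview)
Your proof is correct and follows essentially the same approach as the paper: fix a minimizer $\hat x$ of $x'Px$ on the orbit, use the degree-one homogeneity of the driftless dynamics to compress the transfers $x_0\to\hat x$ and $\hat x\to x_T$ into short intervals of length $\delta$ at the endpoints while resting at $\hat x$ in between, then compute $J^T-\inf J^T=O(\delta)$ and $\|u\|_{L_2}^2=O(\delta^{-1})$. The paper parametrizes by $n=1/\delta$ and normalizes the original transfer times to $1$, but the construction and the estimates are identical to yours.
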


\begin{proof}
Fix $\hat{x}\in \mathcal{A}_{x_{0}}$, such that $\hat{x}^{\prime }P\hat{x}%
\leq x^{\prime }Px$ holds for all $x\in \mathcal{A}_{x_{0}}$, and consider
the controls $u_{0},u_{T}\in L_{2}^{k}[0,1]$, such that:

\begin{enumerate}
\item The trajectory generated by $u_0$ starting at $x(0)=x_0$ satisfies $%
x(1)= \hat x$;

\item The trajectory generated by $u_T$ starting at $x(0)=\hat x $ satisfies
$x(1)= x_T$.
\end{enumerate}

Then, we consider the sequence of controls%
\begin{equation*}
u_{n}\left( t\right) =nu_{0}\left( nt\right) \chi _{\left[ 0,\frac{1}{n}%
\right] }\left( t\right) +nu_{T}\left( 1-n\left( T-t\right) \right) \chi _{%
\left[ T-\frac{1}{n},T\right] }\left( t\right) .
\end{equation*}%
A simple computation shows that
\begin{equation*}
x_{nu_{0}\left( nt\right) }\left( t\right) =x_{u_{0}}\left( nt\right)
\end{equation*}%
holds for all $t\in \left[ 0,\frac{1}{n}\right] $, and
\begin{equation*}
x_{nu_{T}\left( 1-n\left( T-t\right) \right) }\left( t\right)
=x_{u_{T}}\left( 1-n\left( T-t\right) \right)
\end{equation*}%
holds for all $t\in \left[ T-\frac{1}{n},T\right] $. Therefore, for all
sufficiently large $n\in \mathbb{N}$, $u_{n}$ satisfies the boundary
condition $x_{u_{n}}\left( T\right) =x_{T}$. Now, \bigskip \newline
$\left\Vert u_{n}\right\Vert _{L_{2}[0,T]}^{2}=n^{2}\left( \int_{0}^{\frac{1%
}{n}}\left\vert u_{0}\left( nt\right) \right\vert ^{2}dt+\int_{T-\frac{1}{n}%
}^{T}\left\vert u_{T}\left( 1-n\left( T-t\right) \right) \right\vert
^{2}dt\right) =\medskip $\newline
$=n^{2}\left( \int_{0}^{1}\left\vert u_{0}\left( t\right) \right\vert ^{2}%
\frac{1}{n}dt+\int_{0}^{1}\left\vert u_{T}\left( t\right) \right\vert ^{2}%
\frac{1}{n}dt\right) =n\left( \left\Vert u_{0}\right\Vert
_{L_{2}[0,1]}^{2}+\left\Vert u_{T}\right\Vert _{L_{2}[0,1]}^{2}\right)
\bigskip $\newline
Also, \bigskip \newline
$J_{0}^{T}\left( u_{n}\right) =%
\begin{array}[t]{l}
\int_{0}^{\frac{1}{n}}x_{u_{0}}\left( nt\right) ^{\prime }Px_{u_{0}}\left(
nt\right) ~dt+\int_{\frac{1}{n}}^{T-\frac{1}{n}}\hat{x}^{\prime }P\hat{x}%
~dt+\medskip \\
+\int_{T-\frac{1}{n}}^{T}x_{u_{T}}\left( 1-n\left( T-t\right) \right)
^{\prime }Px_{u_{T}}\left( 1-n\left( T-t\right) \right) ~dt=%
\end{array}%
\medskip $\newline
$=%
\begin{array}[t]{l}
\int_{0}^{\frac{1}{n}}x_{u_{0}}\left( nt\right) ^{\prime }Px_{u_{0}}\left(
nt\right) -\hat{x}^{\prime }P\hat{x}~dt+\widehat{J_{0}^{T}}+\medskip \\
+\int_{T-\frac{1}{n}}^{T}x_{u_{T}}\left( 1-n\left( T-t\right) \right)
^{\prime }Px_{u_{T}}\left( 1-n\left( T-t\right) \right) -\hat{x}^{\prime }P%
\hat{x}~dt=%
\end{array}%
\medskip $\newline
$=\widehat{J_{0}^{T}}+\frac{1}{n}\int_{0}^{1}x_{u_{0}}\left( t\right)
^{\prime }Px_{u_{0}}\left( t\right) -\hat{x}^{\prime }P\hat{x}~dt+\frac{1}{n}%
\int_{0}^{1}x_{u_{T}}\left( t\right) ^{\prime }Px_{u_{T}}\left( t\right) -%
\hat{x}^{\prime }P\hat{x}~dt=\medskip $\newline
$=\widehat{J_{0}^{T}}+\frac{Const.}{n}, \bigskip $\newline where
$\widehat{J_{0}^{T}} = \inf \limits _u J_0^T(u)$.
This shows that, $\frac{\inf \left\{ \ln \left\Vert u\right\Vert _{L_{2}%
\left[ 0,T\right] }:J_{0}^{T}\left( u\right) \leq \widehat{J_{0}^{T}}+\frac{1%
}{n}\right\} }{\ln n}\leq \frac{\ln \sqrt{n}+Const.}{\ln n}$. By
letting $n$ go to $+ \infty $, we prove the result.
\end{proof}

\begin{proposition}
Suppose that $x_{T}\in \mathcal{A}_{x_{0}}$ and there exists $x\in \mathcal{A%
}_{x_{0}}$ such that
\begin{equation}
x^{\prime }Px<\max \left\{ x_{0}^{\prime }Px_{0},x_{T}^{\prime
}Px_{T}\right\} .  \label{Z2}
\end{equation}%
Then, $\sigma _{T}\geq \frac{1}{2}$. $\square $
\end{proposition}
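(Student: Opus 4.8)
The plan is to show that there is a constant $C>0$, depending only on the data of the problem, such that $\|u\|_{L_2[0,T]}^2\ge C\varepsilon^{-1}$ for \emph{every} control $u$ with $J_0^T(u)\le\inf J_0^T+\varepsilon$ and $|x_u(T)-x_T|<\varepsilon$, once $\varepsilon>0$ is small; taking logarithms and dividing by $\ln\frac1\varepsilon$ then yields $\sigma_T\ge\frac12$ straight from Definition~\ref{degsing}. By Theorem~\ref{Thdriftless}(i) we have $\inf J_0^T=\alpha T$ with $\alpha=\inf\{x'Px:x\in\mathcal A_{x_0}\}$, and since every point $x_u(t)$ of a trajectory lies in $\mathcal A_{x_0}$, we get $x_u(t)'Px_u(t)\ge\alpha$ for all $t$. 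Condition \eqref{Z2} says exactly that $\alpha<\max\{x_0'Px_0,x_T'Px_T\}$. Because a driftless system and the functional $\int x'Px\,dt$ are invariant under time reversal $t\mapsto T-t$, and $\mathcal A_{x_T}=\mathcal A_{x_0}$, I would assume $\max\{x_0'Px_0,x_T'Px_T\}=x_0'Px_0=:c>\alpha$; the case in which the maximum is attained at $x_T$ is entirely analogous, carried out near $t=T$ and using the endpoint constraint $|x_u(T)-x_T|<\varepsilon$ to control the position of $x_u(T)$.

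First I would force the trajectory to move a definite distance within a short time. Put $\ell:=\tfrac12(\alpha+c)$, so $\alpha<\ell<c$, let $S:=\{x:x'Px\le\ell\}$, and set $\rho:=\operatorname{dist}(x_0,S)$; since $S$ is closed (indeed compact, as $P>0$) and $x_0\notin S$ (because $x_0'Px_0=c>\ell$), we have $\rho>0$. From $\int_0^T(x_u'Px_u-\alpha)\,dt=J_0^T(u)-\alpha T\le\varepsilon$ and $x_u'Px_u-\alpha\ge0$, Chebyshev's inequality gives $\lambda\{t\in[0,T]:x_u(t)'Px_u(t)>\ell\}\le\varepsilon/(\ell-\alpha)$. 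Hence, once $\varepsilon$ is small enough that $2\varepsilon/(\ell-\alpha)<T$, there is $t_\varepsilon\in(0,2\varepsilon/(\ell-\alpha))$ with $x_u(t_\varepsilon)\in S$, so that $|x_u(t_\varepsilon)-x_0|\ge\rho$.

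Next I would localize near $x_0$. The trajectory starts at $x_0$ and, by time $t_\varepsilon$, reaches a point at distance $\ge\rho$; let $\tau_\varepsilon\in(0,t_\varepsilon]$ be the first instant with $|x_u(\tau_\varepsilon)-x_0|=\rho/2$, so that $x_u(t)\in K:=\{x:|x-x_0|\le\rho/2\}$ for all $t\in[0,\tau_\varepsilon]$. On the compact set $K$ the matrix $G(x)=(g_1(x),\dots,g_k(x))$ has bounded norm $M_G:=\max_{x\in K}\|G(x)\|<\infty$, hence $|\dot x_u(t)|\le M_G|u(t)|$ on $[0,\tau_\varepsilon]$, and by Cauchy--Schwarz
\[
\tfrac{\rho}{2}=|x_u(\tau_\varepsilon)-x_0|\le\int_0^{\tau_\varepsilon}|\dot x_u|\,dt\le M_G\sqrt{\tau_\varepsilon}\,\|u\|_{L_2[0,T]}\le M_G\sqrt{\tfrac{2\varepsilon}{\ell-\alpha}}\,\|u\|_{L_2[0,T]}.
\]
Thus $\|u\|_{L_2[0,T]}^2\ge\frac{\rho^2(\ell-\alpha)}{8M_G^2}\,\varepsilon^{-1}=:C\varepsilon^{-1}$, so $\ln\|u\|_{L_2}\ge\tfrac12\ln\tfrac1\varepsilon+\tfrac12\ln C$ for every admissible $u$. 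Therefore the infimum occurring in Definition~\ref{degsing} is bounded below by $\tfrac12\ln\tfrac1\varepsilon+\tfrac12\ln C$; dividing by $\ln\tfrac1\varepsilon$ and passing to the $\limsup$ as $\varepsilon\to0^+$ gives $\sigma_T\ge\tfrac12$.

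The step I expect to be the main obstacle is the localization: a priori an $\varepsilon$-minimizer might let $x_u$ spike far from $x_0$ on the short interval $[0,t_\varepsilon]$, so one cannot bound $\|G(x_u(t))\|$ over all of $[0,t_\varepsilon]$ at once. Passing instead to the first exit time $\tau_\varepsilon$ from the small ball $K$ repairs this, at the cost of replacing the displacement $\rho$ by $\rho/2$, which is harmlessly absorbed into the constant $C$. The remaining ingredients --- positivity of $\rho$ from the ellipsoid geometry, the Chebyshev estimate, and the reduction of the case where $x_T'Px_T$ realizes the maximum --- are routine.
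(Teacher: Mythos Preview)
Your proof is correct and follows essentially the same route as the paper's: a Chebyshev-type estimate forces the trajectory to reach the sublevel set $\{x'Px\le\ell\}$ within time $O(\varepsilon)$, a first-exit-time argument localizes to a compact ball around $x_0$ where $\|G(x)\|$ is bounded, and Cauchy--Schwarz converts the displacement $\rho/2$ achieved in time $O(\varepsilon)$ into the lower bound $\|u\|_{L_2}\gtrsim\varepsilon^{-1/2}$. The only cosmetic differences are that the paper takes the threshold $\hat x'P\hat x+\delta$ instead of your midpoint $\ell=\tfrac12(\alpha+c)$ and exits at radius $\rho$ rather than $\rho/2$; your explicit tracking of the constants and your remark on why the first-exit-time localization is needed are in fact slightly cleaner than the paper's version.
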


\begin{proof}
The quadratic form $x\mapsto x^{\prime }Px$ admits a minimum in the closure
of $\mathcal{A}_{x_{0}}$. Let $\hat{x}\in \overline{\mathcal{A}}_{x_{0}}$ be
such a minimizer. Assumption $\left( \ref{Z2}\right) $ is equivalent to
state that $\hat{x}^{\prime }P\hat{x}<\max \left\{ x_{0}^{\prime
}Px_{0},x_{T}^{\prime }Px_{T}\right\} $. Without loss of generality , we
suppose that $\hat{x}^{\prime }P\hat{x}<x_{0}^{\prime }Px_{0}$ holds. Then,
there exist $\delta >0$, $\rho >0$ such that $\left\vert x-x_{0}\right\vert
\geq \rho $ holds whenever $x^{\prime }Px<\hat{x}^{\prime }P\hat{x}+\delta $
holds. Consider some fixed $\delta $ and $\rho $ as above. For each $%
\varepsilon \in \left] 0,\delta ^{2}\right[ $, let $u_{\varepsilon }\in
L_{2}^{k}\left[ 0,T\right] $ denote a control satisfying
\begin{equation}
J_{0}^{T}\left( u_{\varepsilon }\right) <\widehat{J_{0}^{T}}+\varepsilon .
\label{Z3}
\end{equation}
This last condition implies
\begin{equation*}
\int_{0}^{T}\left( x_{u_{\varepsilon }}^{\prime }Px_{u_{\varepsilon }}-\hat{x%
}P\hat{x}\right) ~dt<\varepsilon .
\end{equation*}%
Since $x_{u_{\varepsilon }}^{\prime }\left( t\right) Px_{u_{\varepsilon
}}\left( t\right) \geq \hat{x}P\hat{x}$ holds for all $t$, this implies
\begin{equation*}
\lambda \left\{ t\in \left[ 0,T\right] :x_{u_{\varepsilon }}^{\prime }\left(
t\right) Px_{u_{\varepsilon }}\left( t\right) -\hat{x}P\hat{x}\geq \delta
\right\} <\frac{\varepsilon }{\delta }.
\end{equation*}%
Here $\lambda $ denotes Lebesgue measure in $\mathbb{R}$. It follows that
there exists $t_{\varepsilon }\in \left] 0,\frac{\varepsilon }{\delta }%
\right[ $ such that $x_{u_{\varepsilon }}^{\prime }\left( t_{\varepsilon
}\right) Px_{u_{\varepsilon }}\left( t_{\varepsilon }\right) -\hat{x}P\hat{x}%
<\delta $. This implies that $\left\vert x_{u_{\varepsilon }}\left(
t_{\varepsilon }\right) -x_{0}\right\vert \geq \rho $. Let
\begin{eqnarray*}
\hat{t}_{\varepsilon } &=&\min \left\{ t\in \left[ 0,T\right] :\left\vert
x_{u_{\varepsilon }}\left( t_{\varepsilon }\right) -x_{0}\right\vert \geq
\rho \right\} ; \\
M &=&\max \left\{ \sum_{i=1}^{k}\left\vert g_{i}\left( x\right) \right\vert
:\left\vert x-x_{0}\right\vert \leq \rho \right\} .
\end{eqnarray*}%
It is clear that $\hat{t}_{\varepsilon }<\frac{\varepsilon }{\delta }$ and $%
M<+\infty $. Therefore, we have the estimates\bigskip \newline
$\rho =\left\vert x_{u_{\varepsilon }}\left( \hat{t}_{\varepsilon }\right)
-x_{0}\right\vert =\left\vert \int_{0}^{\hat{t}_{\varepsilon
}}\sum\limits_{i=1}^{k}g_{i}\left( x_{u_{\varepsilon }}\left( t\right)
\right) u_{i,\varepsilon }\left( t\right) ~dt\right\vert \leq \medskip
\newline
\leq \int_{0}^{\hat{t}_{\varepsilon }}\sum\limits_{i=1}^{k}\left\vert
g_{i}\left( x_{u_{\varepsilon }}\left( t\right) \right) \right\vert \times
\left\vert u_{\varepsilon }\left( t\right) \right\vert ~dt\leq \medskip $%
\newline
$\leq M\int_{0}^{\hat{t}_{\varepsilon }}\left\vert u_{\varepsilon }\left(
t\right) \right\vert ~dt\leq M\sqrt{\hat{t}_{\varepsilon }}\left\Vert
u_{\varepsilon }\right\Vert _{L_{2}\left[ 0,T\right] }\leq \frac{M\sqrt{%
\varepsilon }}{\sqrt{\delta }}\left\Vert u_{\varepsilon }\right\Vert _{L_{2}%
\left[ 0,T\right] }$.\bigskip \newline
This shows that
\begin{equation*}
\left\Vert u_{\varepsilon }\right\Vert _{L_{2}\left[ 0,T\right] }\geq \frac{%
\rho \sqrt{\delta }}{M\sqrt{\varepsilon }}=\frac{Const}{\sqrt{\varepsilon }}.
\end{equation*}%
Since $u_{\varepsilon }$ is an arbitrary control satisfying $\left( \ref{Z3}%
\right) $, it follows that
\begin{equation*}
\frac{\inf \left\{ \ln \left\Vert u\right\Vert _{L_{2}\left[ 0,T\right]
}:J_{0}^{T}\left( u\right) <\widehat{J_{0}^{T}}+\varepsilon \right\} }{\ln
\frac{1}{\varepsilon }}\geq \frac{\frac{1}{2}\ln \varepsilon +Const}{\ln
\varepsilon },
\end{equation*}%
which proves the result.
\end{proof}

\subsection{Proof of Proposition~\protect\ref{P5}\label{SS4b}}

We start with an auxiliary lemma, which establishes Lipschitz
continuity of the input-to-trajectory map of system (\ref{raf1})
with respect to so called relaxation metric in the space of
time-variant vector fields.

\begin{definition}
Let $\mathcal{O }\subset \mathbb{R}^r$ be a nonempty open set and let $%
\mathcal{F}$ be a set of time-variant vector fields $F:[0,T]\times \mathcal{%
O }\mapsto \mathbb{R}^{n}$.

$\mathcal{F}$ is said to be locally uniformly Lipschitzian with respect to $%
x $ if for every compact $K\subset \mathcal{O}$ there exists a constant $%
m<+\infty $ such that
\begin{equation*}
\left\vert F\left( t,x^{\prime }\right) -F\left( t,x\right)
\right\vert \leq m\left\vert x^{\prime }-x\right\vert
,
\end{equation*}
holds for every $ F\in \mathcal{F}$, $ t\in ] 0,T]$, $x,x^{\prime
}\in K.~\square $\footnote{ see \cite[Chapter 4]{Gam} for a more
general definition of uniformly Lipschitzian sets with $m$ depending
on $t$}

\end{definition}

\begin{lemma}
\label{P11}Consider a family of time-variant vector fields $\mathcal{F}$,
locally uniformly Lipschitzian with respect to $x$. Fix $F_{0}\in \mathcal{F}
$ and suppose that the solution of the differential equation%
\begin{equation*}
\dot{x}(t)=F_{0}\left( t,x\left( t\right) \right) ,\qquad x\left( 0\right)
=x_{0}
\end{equation*}%
(denoted by $x_{F_{0}}$) is defined for $t\in \lbrack 0,T]$. For every $F\in
\mathcal{F}$ we define the 'deviation'
\begin{equation}
\Delta _{F_{0},F}=\sup_{t\in \lbrack 0,T]}\left\vert \int_{0}^{t}F(\tau
,x_{F_{0}}(\tau ))d\tau -\int_{0}^{t}F_{0}(\tau ,x_{F_{0}}(\tau ))d\tau
\right\vert .  \label{indif}
\end{equation}%
If $\Delta _{F_{0},F}$ is sufficiently small then the solution of the
differential equation%
\begin{equation*}
\dot{x}(t)=F\left( t,x\left( t\right) \right) ,\qquad x\left( 0\right) =x_{0}
\end{equation*}%
is defined for $t\in \lbrack 0,T]$ and satisfies
\begin{equation}
\Vert x_{F}-x_{F_{0}}\Vert _{L_{\infty }[0,T]}\leq e^{mT}\Delta _{F_{0},F},
\label{gron}
\end{equation}%
where $m<+\infty $ is a constant independent of $F$. $\square $ \smallskip
\end{lemma}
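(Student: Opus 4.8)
The plan is to run a Gr\"onwall-type estimate in integral form, combined with a continuation argument that keeps the perturbed trajectory inside a fixed compact tube around $x_{F_0}$.

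\textbf{Set-up.} First I would fix a compact neighbourhood of the reference trajectory: choose $\delta_0>0$ small and put $K=\{x\in\mathbb{R}^n:\mathrm{dist}(x,x_{F_0}([0,T]))\le\delta_0\}$, which is compact and contained in $\mathcal O$. Let $m<+\infty$ be the uniform Lipschitz constant supplied by the hypothesis for this $K$; it is independent of $F\in\mathcal F$. Since $t\mapsto\int_0^tF_0(\tau,x_{F_0}(\tau))\,d\tau=x_{F_0}(t)-x_0$ is absolutely continuous and, by definition of the deviation, $t\mapsto\int_0^tF(\tau,x_{F_0}(\tau))\,d\tau$ differs from it by at most $\Delta_{F_0,F}$, the map $\tau\mapsto F(\tau,x_{F_0}(\tau))$ is integrable on $[0,T]$; hence $\tau\mapsto\sup_{x\in K}|F(\tau,x)|\le|F(\tau,x_{F_0}(\tau))|+m\,\mathrm{diam}\,K$ is an $L_1$ majorant on the tube, which rules out finite-time blow-up of $x_F$ while $x_F$ stays in $K$. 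From now on I assume $\Delta_{F_0,F}<\delta_0 e^{-mT}$; this is the meaning of ``sufficiently small''.

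\textbf{A priori estimate.} On any interval $[0,c]$ on which $x_F$ exists and $x_F([0,c])\subset K$, subtract the integral equations for $x_F$ and $x_{F_0}$ and insert $\pm\int_0^tF(\tau,x_{F_0}(\tau))\,d\tau$:
\[
x_F(t)-x_{F_0}(t)=\int_0^t\bigl(F(\tau,x_F(\tau))-F(\tau,x_{F_0}(\tau))\bigr)\,d\tau+g(t),
\]
where $g(t)=\int_0^tF(\tau,x_{F_0}(\tau))\,d\tau-\int_0^tF_0(\tau,x_{F_0}(\tau))\,d\tau$ satisfies $|g(t)|\le\Delta_{F_0,F}$ by \eqref{indif}. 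Since both $x_F(\tau)$ and $x_{F_0}(\tau)$ lie in $K$, the Lipschitz bound gives $|F(\tau,x_F(\tau))-F(\tau,x_{F_0}(\tau))|\le m|x_F(\tau)-x_{F_0}(\tau)|$, hence
\[
|x_F(t)-x_{F_0}(t)|\le\Delta_{F_0,F}+m\int_0^t|x_F(\tau)-x_{F_0}(\tau)|\,d\tau,\qquad t\in[0,c],
\]
and Gr\"onwall's inequality yields $|x_F(t)-x_{F_0}(t)|\le\Delta_{F_0,F}e^{mt}\le\Delta_{F_0,F}e^{mT}<\delta_0$ on $[0,c]$.

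\textbf{Continuation and conclusion.} It then remains to show that $x_F$ is defined on all of $[0,T]$ and never leaves $K$. Let $[0,b)$ be the maximal existence interval of $x_F$ and let $c=\sup\{t\in[0,b):x_F([0,t])\subset K\}$; $c>0$ since $x_F(0)=x_0\in K$. The a priori estimate applies on $[0,c)$ and forces $|x_F(t)-x_{F_0}(t)|<\delta_0$ there, so $x_F(t)$ stays in the \emph{interior} of $K$, bounded away from $\partial K$; by continuity this persists at $t=c$, so $c<b$ would contradict the definition of $c$, giving $c=b$. Thus $x_F([0,b))\subset K$, and since $\sup_{x\in K}|F(\cdot,x)|\in L_1[0,T]$ the solution cannot escape to infinity and extends to $t=b$, and beyond if $b<T$; maximality forces $b=T$. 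Applying the a priori estimate with $c=T$ gives $\|x_F-x_{F_0}\|_{L_\infty[0,T]}\le e^{mT}\Delta_{F_0,F}$, which is \eqref{gron}. The only delicate point is exactly this interplay: the Gr\"onwall bound is valid only while $x_F$ stays in $K$, while staying in $K$ is deduced from that very bound; the device of working on the maximal ``good'' subinterval and showing the bound keeps $x_F$ strictly inside $K$ is what breaks the apparent circularity.
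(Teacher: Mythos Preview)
Your proof is correct and follows essentially the same approach as the paper: the identical splitting of $x_F(t)-x_{F_0}(t)$ into a Lipschitz term plus the deviation $g(t)$, followed by Gr\"onwall. The only difference is in the localisation step: the paper dispatches existence by the standard cutoff trick (``we can assume without loss of generality that all fields $F\in\mathcal F$ vanish outside some bounded open set containing the compact curve $x_{F_0}$'', hence all fields are complete), whereas you make the equivalent continuation argument explicit via the maximal ``good'' subinterval---which is precisely the mechanism the cutoff trick hides.
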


\begin{remark}
Note the collocation of the norm beyond the integral sign in
(\ref{indif}). This characterizes the so called \textit{relaxation
metrics} in comparison with integral metrics. For example $\Delta
_{0,F}$ in (\ref{indif}) can become small if $F$ does not depend on
$x$ and is fast oscillating with respect to $t$ (e.g. $F\left(
t,x\right) =\cos Nt$, with $N$ large). $\square $
\end{remark}

\begin{proof}
Like in the standard proof of continuous dependence of solutions on
the right-hand side of ODE's, we can assume without loss of
generality that all fields $F\in \mathcal{F}$ vanish outside some
bounded open set containing the compact curve $x_{F_{0}}$.
Therefore, we can assume that all fields are complete and we only
need to prove that (\ref{gron}) holds.

Let $m < + \infty $ denote the Lipschitz constant of the family
$\mathcal F$. A simple computation shows that
\begin{eqnarray*}
&& \left\vert x_{F}\left( t\right) -x_{F_{0}}\left( t\right)
\right\vert = \left\vert \int_{0}^{t}F\left( \tau ,x_{F}(\tau
)\right) -F_{0}\left( \tau ,x_{F_{0}}(\tau )\right) ~d\tau
\right\vert \leq \\
&& \leq
\begin{array}[t]{l}
\left\vert \int_{0}^{t}F\left( \tau ,x_{F}(\tau )\right) -F\left( \tau
,x_{F_{0}}(\tau )\right) ~d\tau \right\vert +\medskip \\
+\left\vert \int_{0}^{t}F\left( \tau ,x_{F_{0}}(\tau )\right) -F_{0}\left(
\tau ,x_{F_{0}}(\tau )\right) ~d\tau \right\vert \leq \medskip%
\end{array}
\\
&& \leq m\int_{0}^{t}\left\vert x_{F}(\tau )-x_{F_{0}}(\tau
)\right\vert ~d\tau +\Delta _{F_{0},F}.
\end{eqnarray*}

Therefore (\ref{gron}) follows by application of Gronwall inequality.
\end{proof}

The following Lemma is a strengthened version of the well known
Gamkrelidze Approximation Lemma \cite{Gam65}, \cite[Ch. 3]{Gam}:

\begin{lemma}
\label{L3}Consider a controlled field $F\left( \cdot ,\cdot \right) :\mathbb{%
R}^{n+k}\mapsto \mathbb{R}^{n}$, continuously differentiable with respect to
all variables. Fix a compact set $K\subset \mathbb{R}^{k}$ and a relaxed
control supported in $K,$ $\eta \in \mathcal{M}_{K}\left[ 0,T\right] $, such
that $x_{\eta }$ (the trajectory of the system $\dot{x}\left( t\right)
=\left\langle F\left( x\left( t\right) ,\cdot \right) ,\eta
_{t}\right\rangle ,\quad x\left( 0\right) =x_{0}$) is defined for all $t\in %
\left[ 0,T\right] $.\newline There exists a sequence of piecewise
constant controls $\left\{ v_{N}: [0,T] \mapsto K \right\} _{N\in
\mathbb{N}}$, such that:

\begin{itemize}
\item[i)] each $v_N$ has at most $(n+2)N$ points of discontinuity;
\smallskip

\item[ii)] $\left\Vert x_{v_{N}}-x_{\eta }\right\Vert _{L_{\infty }\left[ 0,T%
\right] }=O\left( \frac{1}{N}\right) $ as $N\rightarrow \infty $. $\square $
\end{itemize}
\end{lemma}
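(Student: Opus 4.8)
The plan is to reduce assertion (ii) to making small the relaxation deviation \eqref{indif} and then to invoke Lemma~\ref{P11}. First I would record the basic bounds. Since $\eta$ is supported in the compact set $K$ and $x_{\eta}$ is defined on $[0,T]$, its image $K_{0}$ is compact; fix a bounded neighbourhood $\overline{B}\supset K_{0}$ and put $M_{0}=\sup\{|F(x,v)|:x\in\overline{B},\,v\in K\}$ and $L=\sup\{|D_{x}F(x,v)|:x\in\overline{B},\,v\in K\}$, both finite because $F$ is $C_{1}$. Then $x_{\eta}$ is $M_{0}$-Lipschitz, the maps $x\mapsto F(x,v)$ are $L$-Lipschitz uniformly in $v\in K$, and so is $(t,x)\mapsto\langle F(x,\cdot),\eta_{t}\rangle$. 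Exactly as in the proof of Lemma~\ref{P11}, by replacing each field $F(\cdot,v)$ by a field agreeing with it on $\overline{B}$ and vanishing outside a larger ball, I may assume without loss of generality that all the fields involved are complete and globally $L$-Lipschitz in $x$; then the family $\mathcal{F}=\{(t,x)\mapsto F(x,v):v\in K\}\cup\{(t,x)\mapsto\langle F(x,\cdot),\eta_{t}\rangle\}$ is uniformly Lipschitzian with respect to $x$ in the sense of the definition preceding Lemma~\ref{P11}.

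Given $N$, I would partition $[0,T]$ into the $N$ equal intervals $I_{i}=[(i-1)T/N,\,iT/N]$, $i=1,\dots,N$, and set $\xi_{i}=x_{\eta}((i-1)T/N)$ and $\mu_{i}=\tfrac{N}{T}\int_{I_{i}}\eta_{\tau}\,d\tau$; the latter is a probability measure on $K$, well defined thanks to the weak measurability in Definition~\ref{D1}. By Proposition~\ref{P12} and Carath\'eodory's theorem, $\langle\mu_{i},F(\xi_{i},\cdot)\rangle\in\mathrm{conv}\{F(\xi_{i},v):v\in K\}\subset\mathbb{R}^{n}$ can be written as $\sum_{j=1}^{n+2}\lambda_{i,j}F(\xi_{i},w_{i,j})$ with $w_{i,j}\in K$, $\lambda_{i,j}\ge 0$, $\sum_{j}\lambda_{i,j}=1$ (one needs only $n+1$ summands for the trajectory alone, and $n+2$ if, as in the application to Proposition~\ref{P5}, the running cost is adjoined as an extra coordinate). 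Define $v_{N}$ on $I_{i}$ to equal $w_{i,j}$ on a subinterval of length $\lambda_{i,j}T/N$, in a fixed order. Then $v_{N}\colon[0,T]\to K$ is piecewise constant with at most $n+1$ interior jumps on each $I_{i}$ plus the $N-1$ interval endpoints, hence at most $(n+2)N$ points of discontinuity; this is assertion (i). Note that, since only finitely many indices $i$ occur, no measurable selection theorem is needed here, unlike in the classical Gamkrelidze Lemma.

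It remains to bound $\Delta:=\sup_{t\in[0,T]}\bigl|\int_{0}^{t}F(x_{\eta}(\tau),v_{N}(\tau))\,d\tau-\int_{0}^{t}\langle F(x_{\eta}(\tau),\cdot),\eta_{\tau}\rangle\,d\tau\bigr|$. On a full interval $I_{i}$ one has $|x_{\eta}(\tau)-\xi_{i}|\le M_{0}T/N$, so, using the uniform $L$-Lipschitz bound and the fact that $v_{N}$ spends time $\lambda_{i,j}T/N$ at $w_{i,j}$ on $I_{i}$,
\[
\Bigl|\int_{I_{i}}F(x_{\eta}(\tau),v_{N}(\tau))\,d\tau-\tfrac{T}{N}\sum_{j}\lambda_{i,j}F(\xi_{i},w_{i,j})\Bigr|\le \tfrac{L M_{0}T^{2}}{N^{2}},
\]
and likewise $\bigl|\int_{I_{i}}\langle F(x_{\eta}(\tau),\cdot),\eta_{\tau}\rangle\,d\tau-\tfrac{T}{N}\langle\mu_{i},F(\xi_{i},\cdot)\rangle\bigr|\le L M_{0}T^{2}/N^{2}$; by the choice of $(\lambda_{i,j},w_{i,j})$ the two ``frozen'' quantities coincide, so each full interval contributes at most $2L M_{0}T^{2}/N^{2}$ to $\Delta$. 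Summing over the at most $N$ full intervals and adding the contribution $\le 2M_{0}T/N$ of the at most one partial interval (on which both integrands are bounded by $M_{0}$), I get $\Delta\le 2L M_{0}T^{2}/N+2M_{0}T/N=O(1/N)$. For $N$ large this $\Delta$ is as small as Lemma~\ref{P11} requires; applying that lemma with $F_{0}(t,x)=\langle F(x,\cdot),\eta_{t}\rangle$ (whose trajectory is $x_{\eta}$) and with the time-variant field $(t,x)\mapsto F(x,v_{N}(t))$ (whose trajectory is $x_{v_{N}}$) shows that $x_{v_{N}}$ is defined on all of $[0,T]$ and $\|x_{v_{N}}-x_{\eta}\|_{L_{\infty}[0,T]}\le e^{LT}\Delta=O(1/N)$, which is assertion (ii).

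The only genuinely delicate points are the truncation ``without loss of generality'' (to be carried out verbatim as in the proof of Lemma~\ref{P11}) and the Fubini-type verification that $\mu_{i}$ is a probability measure and that $\int_{I_{i}}\langle F(\xi_{i},\cdot),\eta_{\tau}\rangle\,d\tau=\tfrac{T}{N}\langle\mu_{i},F(\xi_{i},\cdot)\rangle$; both are routine given the weak measurability in Definition~\ref{D1} and the continuity of $F$, and the discontinuity count in (i) is pure bookkeeping. I expect the conceptual heart of the argument to be precisely the observation that the Carath\'eodory decomposition can be performed separately and only finitely often, one interval $I_{i}$ at a time: this is what simultaneously buys the $O(N)$ bound on the number of discontinuities and the $O(1/N)$ approximation rate.
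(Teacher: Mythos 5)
Your proposal is correct and follows essentially the same route as the paper's proof: freeze the state at the left endpoint of each of $N$ equal subintervals, apply Proposition~\ref{P12} and Carath\'eodory's theorem to match the time-averaged relaxed field on each subinterval by a convex combination realized as a piecewise constant control, bound the relaxation deviation \eqref{indif} by $O(1/N)$ using the uniform Lipschitz estimates, and conclude via Lemma~\ref{P11}. Your explicit treatment of the one partial subinterval in the supremum defining $\Delta$ is a small point the paper leaves implicit, but the argument is the same.
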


\begin{proof}
Since $F\left( \cdot ,\cdot \right) :\mathbb{R}^{n+k}\mapsto \mathbb{R}^{n}$
is assumed to be continuously differentiable, it follows that the set
\begin{equation*}
\left\{ \left\langle F\left( \cdot ,\cdot \right) ,\nu _{t}\right\rangle
:~\nu \in \mathcal{M}_{K}\left[ 0,T\right] \right\}
\end{equation*}%
is locally uniformly Lipschitzian. Therefore, due to Lemma
\ref{P11}, we only need to show that there exists a sequence of
piecewise constant controls such that each $v_{N}$ has at most
$(n+2)N$ points of discontinuity and $\Delta _{\left\langle F,\eta
_{t}\right\rangle ,F\left( \cdot
,v_{N}\right) }=O\left( \frac{1}{N}\right) $ as $N\rightarrow \infty $.%
\newline
Fix $N>1$, and let $t_{i}=\frac{iT}{N}$, $i=0,1,2,...,N$. Due to Proposition %
\ref{P12},
\begin{equation*}
\frac{N}{T}\int_{t_{i-1}}^{t_{i}}\left\langle F\left( x_{\eta }\left(
t_{i-1}\right) ,\cdot \right) ,\eta _{t}\right\rangle ~dt\in conv\left\{
F\left( x_{\eta }\left( t_{i-1}\right) ,v\right) :v\in K\right\}
\end{equation*}%
holds and the Carath\'{e}odory theorem guarantees the existence of $%
v_{i,j}^{N}\in K$, $p_{i,j}^{N}\geq 0$ such that
\begin{eqnarray*}
&&\sum_{j=1}^{n+2}p_{i,j}^{N}=1; \\
&&\frac{T}{N}\sum_{j=1}^{n+2}p_{i,j}^{N}F\left( x_{\eta }\left(
t_{i-1}\right) ,v_{i,j}^{N}\right) =\int_{t_{i-1}}^{t_{i}}\left\langle
F\left( x_{\eta }\left( t_{i-1}\right) ,\cdot \right) ,\eta
_{t}\right\rangle ~dt.
\end{eqnarray*}%
We construct the piecewise continuous control%
\begin{equation*}
v_{N}\left( t\right) =\sum_{i=1}^{N}\sum_{j=1}^{n+2}v_{i,j}^{N}\chi _{\left[
t_{i-1}+\sum_{s=1}^{j-1}p_{i,j}^{N}\frac{T}{N},t_{i-1}+%
\sum_{s=1}^{j}p_{i,j}^{N}\frac{T}{N}\right] }\left( t\right) ,
\end{equation*}%
where $\chi _{\left[ a,b\right] }\left( t\right) $ denotes the
characteristic function of the interval $\left[ a,b\right] $. Now,
\begin{eqnarray*}
&& \hspace{-0.5cm} \Delta _{\left\langle F,\eta _{t}\right\rangle
,F\left( \cdot ,v_{N}\right) }=
\sum_{i=1}^{N}\int_{t_{i-1}}^{t_{i}}\left( F\left( x_{\eta }\left(
t\right) ,v_{N}\left( t\right) \right) -\left\langle F\left( x_{\eta
}\left( t\right) ,\cdot \right) ,\eta _{t}\right\rangle \right) ~dt= \\
&&\hspace{0.5cm}=%
\begin{array}[t]{l}
\sum\limits_{i=1}^{N}\int_{t_{i-1}}^{t_{i}}\left( F\left( x_{\eta }\left(
t\right) ,v_{N}\left( t\right) \right) -F\left( x_{\eta }\left(
t_{i-1}\right) ,v_{N}\left( t\right) \right) \right) ~dt+ \\
+\sum\limits_{i=1}^{N}\int_{t_{i-1}}^{t_{i}}\left( F\left( x_{\eta }\left(
t_{i-1}\right) ,v_{N}\left( t\right) \right) -\left\langle F\left( x_{\eta
}\left( t_{i-1}\right) ,\cdot \right) ,\eta _{t}\right\rangle \right) ~dt+
\\
+\sum\limits_{i=1}^{N}\int_{t_{i-1}}^{t_{i}}\left( \left\langle F\left(
x_{\eta }\left( t_{i-1}\right) ,\cdot \right) ,\eta _{t}\right\rangle
-\left\langle F\left( x_{\eta }\left( t\right) ,\cdot \right) ,\eta
_{t}\right\rangle \right) ~dt.%
\end{array}%
\end{eqnarray*}%
Since all fields being considered form a locally uniformly Lipschitzian set
and $x_{\eta }$ is Lipschitzian with respect to time, there exists a
constant $L<+\infty $, independent of $N$, such that
\begin{eqnarray*}
&&\left\vert \sum\limits_{i=1}^{N}\int_{t_{i-1}}^{t_{i}}\left( F\left(
x_{\eta }\left( t\right) ,v_{N}\left( t\right) \right) -F\left( x_{\eta
}\left( t_{i-1}\right) ,v_{N}\left( t\right) \right) \right) ~dt\right\vert
\leq  \\
&\leq &\sum\limits_{i=1}^{N}\int_{t_{i-1}}^{t_{i}}L\left\vert x_{\eta
}\left( t\right) -x_{\eta }\left( t_{i-1}\right) \right\vert ~dt\leq  \\
&\leq &\sum\limits_{i=1}^{N}\int_{t_{i-1}}^{t_{i}}L^{2}\left(
t-t_{i-1}\right) ~dt=\frac{L^{2}T^{2}}{2N}
\end{eqnarray*}%
holds for every sufficiently large $N$. The same argument gives the similar
inequality
\begin{equation*}
\sum\limits_{i=1}^{N}\int_{t_{i-1}}^{t_{i}}\left( \left\langle F\left(
x_{\eta }\left( t_{i-1}\right) ,\cdot \right) ,\eta _{t}\right\rangle
-\left\langle F\left( x_{\eta }\left( t\right) ,\cdot \right) ,\eta
_{t}\right\rangle \right) ~dt\leq \frac{L^{2}T^{2}}{2N}.
\end{equation*}%
Finally,
\begin{eqnarray*}
&&\sum\limits_{i=1}^{N}\int_{t_{i-1}}^{t_{i}}\left( F\left( x_{\eta }\left(
t_{i-1}\right) ,v_{N}\left( t\right) \right) -\left\langle F\left( x_{\eta
}\left( t_{i-1}\right) ,\cdot \right) ,\eta _{t}\right\rangle \right) ~dt= \\
&=&%
\begin{array}[t]{l}
\sum\limits_{i=1}^{N}\sum\limits_{j=1}^{n+2}\int_{\left(
t_{i-1}+\sum_{s=1}^{j-1}p_{i,j}^{N}\frac{T}{N}\right) }^{\left(
t_{i-1}+\sum_{s=1}^{j}p_{i,j}^{N}\frac{T}{N}\right) }F\left( x_{\eta }\left(
t_{i-1}\right) ,v_{i,j}^{N}\right) ~dt-\medskip  \\
\hspace{1cm}
-\sum\limits_{i=1}^{N}\int_{t_{i-1}}^{t_{i}}\left\langle F\left(
x_{\eta
}\left( t_{i-1}\right) ,\cdot \right) ,\eta _{t}\right\rangle ~dt=%
\end{array}
\\
&=&\sum\limits_{i=1}^{N}\left( \frac{T}{N}\sum%
\limits_{j=1}^{n+2}p_{i,j}^{N}F\left( x_{\eta }\left( t_{i-1}\right)
,v_{i,j}^{N}\right) -\int_{t_{i-1}}^{t_{i}}\left\langle F\left( x_{\eta
}\left( t_{i-1}\right) ,\cdot \right) ,\eta _{t}\right\rangle ~dt\right) =0,
\end{eqnarray*}%
which proves that $v_{N}\left( t\right) $ has the desired property.
\end{proof}

For the second approximation step we will use the following lemma concerning
approximation of piece-wise continuous controls by absolutely continuous
controls.

\begin{lemma}
\label{L4}Consider a controlled field $F\left( \cdot ,\cdot \right)
:\mathbb{ R}^{n+k}\mapsto \mathbb{R}^{n}$, continuously
differentiable with respect to all variables. Fix compact sets $K_1
\subset \mathbb R^n$, $K_2 \subset \mathbb R^k$.

For every piecewise constant control $v :[0,T] \mapsto K_2$ with $N$
points of discontinuity such that $x_v$ (the trajectory of the
system $\dot x =F(x,v)$, $x(0)=x_0$) lies in $K_1$ and every
sufficiently small $\varepsilon >0$ there exists a continuous
piecewise linear control $w_\varepsilon : [0,T] \mapsto {\rm
conv}\left( \{ 0 \} \cup K_2 \right)$ such that:

\begin{itemize}
\item[i)] $\left\| x_{w_\varepsilon}-x_{v}\right\| _{L_{\infty }\left[ 0,T
\right] } \leq CN\varepsilon $; \smallskip

\item[ii)] $\left\| \dot w _\varepsilon \right\| \leq \frac{CN} \varepsilon $.
\end{itemize}
Here $x_{w_\varepsilon}$ denotes the trajectory of the system $\dot
x =F(x,v)$, $x(0)=x_0$ and $C<+\infty$ is a constant depending only
on the sets $K_1$, $K_2$. $\square $
\end{lemma}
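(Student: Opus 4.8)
The plan is to obtain $w_\varepsilon$ from $v$ by the obvious corner-rounding. Let $0=\tau_0<\tau_1<\dots<\tau_N<\tau_{N+1}=T$ list the (finitely many) jump points of $v$ together with the endpoints, so that $v\equiv v_i\in K_2$ on $]\tau_i,\tau_{i+1}[$. For $\varepsilon$ below a threshold $\varepsilon_0(v)>0$ (chosen so that the intervals $[\tau_i-\tfrac{\varepsilon}{2},\tau_i+\tfrac{\varepsilon}{2}]$, $i=1,\dots,N$, are pairwise disjoint and contained in $]0,T[$), define $w_\varepsilon$ to equal $v$ outside these intervals and, on $[\tau_i-\tfrac{\varepsilon}{2},\tau_i+\tfrac{\varepsilon}{2}]$, to equal the affine function joining $v_{i-1}$ to $v_i$. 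If one also wants $w_\varepsilon(0)=0$ — which is what is actually needed in Proposition~\ref{P5}, where $w_\varepsilon$ plays the role of the primitive $\phi u_\varepsilon$ — one prepends one further affine ramp from $0$ to $v_0$ on $[0,\varepsilon]$; this is the only reason the admissible range in the statement is $\mathrm{conv}(\{0\}\cup K_2)$ rather than $\mathrm{conv}(K_2)$. In all cases $w_\varepsilon$ is continuous and piecewise linear, and since each of its values is a convex combination of points of $\{0\}\cup K_2$, its range lies in $\mathrm{conv}(\{0\}\cup K_2)$.

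Bound ii) is then immediate: on each modified subinterval $w_\varepsilon$ is affine with slope equal to the difference of two points of $\{0\}\cup K_2$ divided by $\varepsilon$, whence $\|\dot w_\varepsilon\|_{L_\infty}\le \mathrm{diam}(\{0\}\cup K_2)/\varepsilon$, a fortiori $\le CN/\varepsilon$ with $C$ depending only on $K_2$.

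For bound i) I would compare the trajectories of $\dot x=F(x,v(t))$ and $\dot x=F(x,w_\varepsilon(t))$ through Lemma~\ref{P11}, applied to the family $\mathcal F=\{(t,x)\mapsto F(x,\omega(t)):\omega:[0,T]\to\mathrm{conv}(\{0\}\cup K_2)\text{ measurable}\}$, which is locally uniformly Lipschitzian in $x$ with some constant $m$ on a compact neighborhood of $K_1$ because $F$ is $C^1$ and $\mathrm{conv}(\{0\}\cup K_2)$ is compact. The key point is that $w_\varepsilon$ and $v$ coincide off the union of the modified subintervals, a set of measure $\le N\varepsilon$ (or $\le(N+1)\varepsilon$ with the optional initial ramp), and on that set $|F(x_v(\tau),w_\varepsilon(\tau))-F(x_v(\tau),v(\tau))|\le L\,\mathrm{diam}(\{0\}\cup K_2)$, where $L$ is a Lipschitz constant for $F$ in its control argument over $K_1\times\mathrm{conv}(\{0\}\cup K_2)$. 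Hence the deviation $\Delta_{F(\cdot,v),F(\cdot,w_\varepsilon)}$ of \eqref{indif} is $O(N\varepsilon)$, and once $\varepsilon$ is small enough to put it below the threshold of Lemma~\ref{P11} we obtain $\|x_{w_\varepsilon}-x_v\|_{L_\infty[0,T]}\le e^{mT}\Delta_{F(\cdot,v),F(\cdot,w_\varepsilon)}\le CN\varepsilon$, with $C$ depending only on $F$, $T$, $K_1$, $K_2$.

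There is no serious obstacle; the one point demanding care is that $C$ be genuinely independent of $v$ and of $N$. This is precisely why the trajectory comparison must be routed through the relaxation-type Gronwall estimate of Lemma~\ref{P11} — with the supremum \emph{outside} the integral in \eqref{indif}, so that what is exploited is the smallness of the \emph{set} on which $w_\varepsilon\ne v$, not any pointwise smallness of $F(x,w_\varepsilon)-F(x,v)$ — and why all Lipschitz constants are taken once and for all on the fixed compact set $K_1\times\mathrm{conv}(\{0\}\cup K_2)$. The only $v$-dependent quantity entering is the threshold $\varepsilon_0(v)$ below which the modified subintervals stay disjoint and inside $[0,T]$, which is harmless since the assertion concerns only sufficiently small $\varepsilon$.
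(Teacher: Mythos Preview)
Your argument is correct and follows essentially the same route as the paper: replace $v$ by a continuous piecewise-linear function that differs from it only on a set of measure $O(N\varepsilon)$, then invoke Lemma~\ref{P11} to convert the bound on the deviation \eqref{indif} into a bound on $\|x_{w_\varepsilon}-x_v\|_{L_\infty}$.

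The one difference worth flagging is in how the ramp intervals are chosen. You center a ramp of width $\varepsilon$ at each jump point and impose the $v$-dependent threshold $\varepsilon<\varepsilon_0(v)$ (the minimum gap between jumps) so that the ramps stay disjoint. The paper instead selects an $\varepsilon$-separated subsequence $t_{i_0}<t_{i_1}<\dots$ of the jump points by the rule $i_j=\min\{i:t_i\ge t_{i_{j-1}}+\varepsilon\}$ and ramps from $v_{i_{j-1}}$ to $v_{i_j}$ on $[t_{i_{j-1}},t_{i_{j-1}}+\varepsilon)$, simply absorbing any skipped jumps into the ramp; this construction works for every $\varepsilon>0$ regardless of how close the original jump points are. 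For the Lemma as stated your version is adequate, since ``sufficiently small $\varepsilon$'' may depend on $v$. But the paper's uniformity is what is actually exploited in Proposition~\ref{P5}: the piecewise-constant control coming out of Lemma~\ref{L3} may have jump points arbitrarily close together (the Carath\'eodory weights $p_{i,j}^N$ can be arbitrarily small), so a threshold tied to the minimum gap would obstruct the choice of the second-step parameter of order $\varepsilon^2$ needed there.
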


\begin{proof}
Fix $v$ satisfying the assumptions of the Lemma. Let
$0=t_0<t_1<t_2<...<t_N=T$ be the points of discontinuity of $v$. $v$
can be represented as \[v(t)= \sum_{i=1}^N v_i \chi _{[t_{i-1}, t_i
[}(t). \] Fix a small $\varepsilon >0 $ and let
\begin{eqnarray*}
&& i_0 = 0, \qquad v_{i_0}=0, \\
&& i_j= \left\{
\begin{array}{ll}
\min \left\{ i: t_i \geq t_{i_{j-1}} + \varepsilon \right\}, \quad &
{\rm if } \ \left\{ i: t_i \geq t_{i_{j-1}} + \varepsilon \right\}
\neq \emptyset, \medskip \\
N, & {\rm if } \ \left\{ i: t_i \geq t_{i_{j-1}} + \varepsilon
\right\} = \emptyset .
\end{array}
\right.
\end{eqnarray*}
The piecewise linear control \[ w_\varepsilon (t) = \frac 1
\varepsilon \sum _{j=1} ^N \left( (v_{i_j}-v_{i_{j-1}})t \chi
_{[t_{i_{j-1}},t_{i_{j-1}}+\varepsilon[}(t) + v_{i_j}\chi
_{[t_{i_{j-1}}+ \varepsilon,t_{i_j}[}(t) \right)
\]
takes values on ${\rm conv}\left( \{ 0 \} \cup K_2 \right) $ and
differs from $v$ only on the union of intervals $ \bigcup\limits
_{j=1}^N [t_{i_{j-1}},t_{i_j}[ $. Since $K_2$ is compact, there
exists a constant $C_1<+ \infty $ such that $|
v_{i_j}-v_{i_{j-1}}|^2 < C_1$ holds for $j=1,2,...,N$. Therefore, \[
\| \dot w_\varepsilon \|^2_{L_2[0,T]}=\sum\limits_{j=1}^N \frac {|
v_{i_j}-v_{i_{j-1}}|^2 }{\varepsilon ^2} \varepsilon \leq \frac{C_1
N} \varepsilon.
\]
The Lemma \ref{P11} guarantees that the inequality \[ \left\|
x_{w_\varepsilon} -x_v \right\|_{L_\infty [0,T]} \leq e^{mT}
\sup\limits_{t \in[0,T]} \left| \int _0^t F(x_v,w_\varepsilon ) -
F(x_v,v) \, d \tau \right|
\]
holds provided the right-hand side is sufficiently small.

Since $x_v$ lies in $K_1$, $v$ lies in $K_2$ and $w_\varepsilon$
lies in ${\rm conv} \left( \{ 0 \} \cup K_2 \right) $, there exists
a constant $C_2 < + \infty $ such that
\begin{eqnarray*}
&& \hspace{-0.5cm} \sup\limits_{t \in[0,T]} \left| \int _0^t
F(x_v,w_\varepsilon ) -
F(x_v,v) \, d \tau \right| \leq \\
&& \hspace{0.5cm} \leq \sum_{j=1}^N \int _{t_{i_{j-1}}}
^{t_{i_{j-1}}+\varepsilon}\left| F(x_v,w_\varepsilon ) - F(x_v,v) \,
d \tau \right| \leq N C_2 \varepsilon .
\end{eqnarray*}
Therefore the Lemma holds for $C \geq \max \{ C_1, e^{mT}C_2 \}$.
\end{proof}

To conclude the proof of Proposition \ref{P5}, we consider the
augmented state $z(t)=\left( J_r^t, \, y(t) \right)$ with dynamics
\begin{equation}
\dot z (t) = \sum_{i=1}^{n+2} p_i(t)F(z(t),v^i(t) ) =
\sum_{i=1}^{n+2} p_i(t) \left( \begin{array}[c]{c} \tilde f_0 (y(t),
v^i(t) ) \smallskip \\ \tilde f (y(t), v^i(t) )
\end{array}\right) , \label{Z5}
\end{equation}
with  $ \left(\tilde f_0 , \, \tilde f  \right)$ defined by
(\ref{laf}).

Under the assumptions of Proposition \ref{P5}, there exists a
compact set $K \subset \mathbb R^k $ such that \[ \eta = \sum
_{i=1}^{n+2} p_i \delta_{v^i} \in \mathcal M _K [0,T].
\]
Let $N _\varepsilon =O\left( \frac 1 \varepsilon \right) $ when
$\varepsilon \rightarrow 0^+$. Due to Lemma \ref{L3} there exist
piecewise constant controls $\left\{ w_\varepsilon : [0,T] \mapsto K
\right\}_{\varepsilon
>0} $ such that $v_\varepsilon $ has $O\left( \frac 1 \varepsilon
\right) $ points of discontinuity and the corresponding trajectories
of (\ref{Z5}) satisfy \[ \left\| z_{w_\varepsilon} -z_\eta
\right\|_{L_\infty [0,T]} = O(\varepsilon), \qquad {\rm when} \
\varepsilon \rightarrow 0^+.
\]
Due to Lemma \ref{L4},there exist continuous piecewise linear
controls \linebreak $\left\{ v_\varepsilon : [0,T] \mapsto {\rm
conv}\left(\{0\} \cup K \right) \right\}_{\varepsilon
>0} $ such that
\[ \left\| z_{v_\varepsilon} -z_{w_\varepsilon}
\right\|_{L_\infty [0,T]} = O(\varepsilon), \qquad \left\| \dot
v_\varepsilon \right\|_{L_2 [0,T]} = O(\varepsilon^{-3}),
\]
when $ \varepsilon \rightarrow 0^+$. It follows that \[ \left\|
z_{v_\varepsilon} -z_\eta \right\|_{L_\infty [0,T]} \leq \left\|
z_{v_\varepsilon} -z_{w_\varepsilon} \right\|_{L_\infty [0,T]}
+\left\| z_{w_\varepsilon} -z_\eta \right\|_{L_\infty [0,T]}
=O(\varepsilon).
\]
This shows that conditions i), ii) and iv) of Proposition \ref{P5}
can be satisfied.

$y_{v_\varepsilon} (T) $ lies $\varepsilon$-close to $y_\eta (T)$,
which lies in the integral manifold of $G$ that contains $x_T$.
Therefore, $v_\varepsilon$ can be modified (without changing the
magnitude of the estimates above) in such a way that \[ \left| x_T -
e^{Gv_\varepsilon (T)} y_{v_\varepsilon }(T) \right|=O(\varepsilon
), \qquad {\rm when } \ \varepsilon \rightarrow 0^+.
\]
This concludes the proof.

\end{document}